\theoremstyle{definition}
\newtheorem{theorem}{Theorem}[section]
\newtheorem{lemma}[theorem]{Lemma}
\newtheorem{proposition}[theorem]{Proposition}
\newtheorem{definition}[theorem]{Definition}
\newtheorem{example}[theorem]{Example}
\newtheorem{corollary}[theorem]{Corollary}
\newtheorem{remark}[theorem]{Remark}
\newcommand{\Z}{\mathbb{Z}}
\newcommand{\R}{\mathbb{R}}
\newcommand{\C}{\mathbb{C}}
\newcommand{\Q}{\mathbb{Q}}
\newcommand{\Spec}{\text{Spec}}
\newcommand{\Ass}{\text{Ass}}
\newcommand{\Min}{\text{Min}}
\newcommand{\depth}{\text{depth }}
\newcommand{\height}{\text{ht}}
\newenvironment{manualtheorem}[1]{%
  \manualtheoreminner
}{\endmanualtheoreminner}
\newcommand{\interior}[1]{%
  {\kern0pt#1}^{\mathrm{o}}%
}
\DeclarePairedDelimiter\abs{\lvert}{\rvert}
\title{Completions of Uncountable Local Rings with Countable Spectra}
\author{S. Loepp and Teresa Yu}
\begin{document}

\maketitle

\begin{abstract}
We find necessary and sufficient conditions for a complete local (Noetherian) ring to be the completion of an uncountable local (Noetherian) domain with a countable spectrum. Our results suggest that uncountable local domains with countable spectra are more common than previously thought.
We also characterize completions of uncountable excellent local domains with countable spectra assuming the completion contains the rationals, completions of uncountable local unique factorization domains with countable spectra, completions of uncountable noncatenary local domains with countable spectra, and completions of uncountable noncatenary local unique factorization domains with countable spectra.
\end{abstract}

\section{Introduction}

Examples of uncountable Noetherian rings of Krull dimension zero or one that have countable spectra are plentiful. Surprisingly, however, the existence of uncountable Noetherian rings with countable spectra in higher dimensions was not known until 2016, when Colbert constructed in \cite{colbert} an uncountable, $n$-dimensional Noetherian domain with a countable spectrum for any $n\geq 2$. Loepp and Michaelsen extended this result in \cite{loepp2018uncountable} by showing the existence of an uncountable Noetherian domain with a countable spectrum, but with stronger conditions on the ring: they showed, for all $n\geq 0$, the existence of an uncountable, $n$-dimensional, excellent regular local (Noetherian) ring with a countable spectrum.

Given that uncountable Noetherian rings with countable spectra in higher dimensions were not known to exist until very recently, one might expect for such rings to be quite rare. In this paper, we show that these rings are more ``common" than perhaps one might have anticipated, as we find that ``most" complete local rings with a countable residue field are, in fact, the completion of an uncountable local domain with a countable spectrum.  
More precisely, in Section \ref{sec:uncountable}, we prove the following theorem, which characterizes exactly when a complete local ring is the completion of an uncountable domain with a countable spectrum.

\begin{manualtheorem}{\ref{thm:dim2+_precompletion_domain}}
Suppose $T$ is a complete local ring with maximal ideal $M$.
\begin{itemize}
    \item If $\dim T=0$, then $T$ is the completion of an uncountable local domain with a countable spectrum if and only if $T$ is an uncountable field.
    \item If $\dim T=1$, then $T$ is the completion of an uncountable local domain with a countable spectrum if and only if 
\begin{enumerate}
    \item no integer of $T$ is a zero divisor, and
    \item $M\notin\Ass(T)$.
\end{enumerate}
\item If $\dim T\geq 2$, then $T$ is the completion of an uncountable local domain with a countable spectrum if and only if
\begin{enumerate}
    \item no integer of $T$ is a zero divisor,
    \item $M\notin\Ass(T)$, and
    \item $T/M$ is countable.
\end{enumerate}
\end{itemize}
\end{manualtheorem}

Surprisingly, the necessary and sufficient conditions on a complete local ring of dimension at least two to be the completion of an uncountable local domain with a countable spectrum are the same conditions necessary and sufficient for it to be the completion of a countable local domain
\cite[Corollary 3.7]{SMALL19}. 
This is more evidence suggesting that uncountable local domains with countable spectra are more common than expected. 
In order to prove that the conditions of Theorem~\ref{thm:dim2+_precompletion_domain} are sufficient, we construct uncountable local domains from countable local domains while ensuring that the spectra of these rings are order isomorphic when viewed as partially ordered sets.

Natural extensions of this result include characterizing completions of uncountable local rings with countable spectra that satisfy certain properties. In this paper, we provide several extensions of this kind by studying excellent domains, unique factorization domains (UFDs), noncatenary domains, and noncatenary UFDs (Theorems~\ref{thm:dim2+_excellent_precompletion_domain}, \ref{thm:char_ufd_completions_dim_geq2}, \ref{thm:uncount_noncat_domain_char}, and \ref{thm:uncount_noncat_ufd_char}). In general, we are able to construct uncountable local domains with countable spectra and property $X$ by beginning with a countable local domain with property $X$ and constructing from it an uncountable domain whose spectra is order isomorphic to the countable domain's spectra. We then show that the uncountable domain also has property $X$. The countable local domain with property $X$ with which we begin comes from previous results in the literature (for example, \cite[Theorem 8]{heitmannUFD} and results from \cite{paper1}).

The outline of this paper is as follows. In Section~\ref{sec:background}, we provide background. In Section~\ref{sec:construction}, we present a construction of uncountable local domains from countable local domains. Using this construction, we then in Section~\ref{sec:uncountable} characterize completions of uncountable local domains with countable spectra, completions of uncountable excellent local domains with countable spectra such that the completion contains the rationals, completions of uncountable local UFDs with countable spectra, completions of uncountable noncatenary local domains with countable spectra, and completions of uncountable noncatenary local UFDs with countable spectra.

\section{Background}\label{sec:background}

All rings in this paper are commutative with identity. We say a ring is \textit{quasi-local} if it has exactly one maximal ideal but is not necessarily Noetherian, and we say a ring is \textit{local} if it has exactly one maximal ideal and is Noetherian. We denote a quasi-local ring $R$ with unique maximal ideal $M$ by $(R,M)$, and we denote the completion of a local ring $(R,M)$ with respect to its maximal ideal by $\widehat{R}$. 

In \cite{lech}, Lech proves the following result, which characterizes completions of local domains.

\begin{theorem}[\cite{lech}, Theorem 1]\label{thm:lech_char_domain}
A complete local ring $(T,M)$ is the completion of a local domain if and only if
\begin{enumerate}
    \item no integer of $T$ is a zero divisor, and
    \item unless equal to $(0)$, $M\notin\Ass(T)$.
\end{enumerate}
\end{theorem}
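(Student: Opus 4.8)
The plan is to treat the two implications separately: the ``only if'' direction is a brief faithful-flatness argument, while the ``if'' direction is the substance and requires building a local domain inside $T$ by transfinite approximation.

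For necessity, suppose $T=\widehat A$ for a local domain $(A,\mathfrak m)$. The completion map $A\to T$ is faithfully flat, and flatness carries non-zero-divisors to non-zero-divisors. For (1): a nonzero integer of $T$ is a unit when $T$ has positive characteristic (its prime subring is then a field) and is the image of a nonzero element of the domain $A$ when $T$ has characteristic $0$; in either case it is a non-zero-divisor on $A$, hence on $T$. For (2): if $\mathfrak m=(0)$ then $A$ is a field, $T=A$, and $M=(0)$; otherwise pick a nonzero $x\in\mathfrak m$, observe $x$ is a non-zero-divisor on the domain $A$ hence on $T$, and $x\in\mathfrak m\subseteq M$, so $M$ contains a non-zero-divisor of $T$ and therefore $M\notin\Ass(T)$.

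For sufficiency, assume $(T,M)$ satisfies (1) and (2); I would construct a local domain $A\subseteq T$ with $\widehat A\cong T$. Let $R_0$ be the prime subring of $T$ localized at its intersection with $M$: this is a quasi-local domain with maximal ideal $R_0\cap M$, and condition (1) is precisely what guarantees $R_0\cap\mathfrak p=(0)$ for every $\mathfrak p\in\Ass(T)$ (automatic unless $T$ has mixed characteristic). Starting from $R_0$ I would build, by transfinite recursion, an increasing chain of quasi-local subrings $R_0\subseteq R_1\subseteq\cdots$ of $T$, each with maximal ideal $R_\alpha\cap M$, taking unions at limit stages, whose union $A$ satisfies: (i) the map $A\to T/M^2$ is surjective; and (ii) $IT\cap A=I$ for every finitely generated ideal $I$ of $A$. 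A standard completion lemma then shows that properties (i) and (ii), for a subring of $T$ that is quasi-local with maximal ideal $A\cap M$, already force $A$ to be Noetherian with $\widehat A\cong T$. The recursion is organized around a bookkeeping list that, over the course of the transfinite process, enumerates enough elements of $T$ to force surjectivity onto $T/M^2$ together with enough ``test configurations'' $(I,x)$ with $x\in IT\cap R_\alpha$ to force the contraction identities in (ii), adjoining at each successor stage one new element of $T$ along with the finitely many further elements needed to keep (ii) on track, then localizing as above to restore quasi-locality.

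The step where hypothesis (2) is essential — and the one I expect to be the main obstacle — is keeping every $R_\alpha$, and hence $A$, an integral domain throughout the recursion. Whenever the construction calls for adjoining an element $t\in T$, one instead adjoins a perturbation $t+\varepsilon$ with $\varepsilon$ chosen in a sufficiently high power $M^k$ so that $R_\alpha[t+\varepsilon]$ still injects into $T/\mathfrak p$ for every $\mathfrak p\in\Ass(T)$; because $\Ass(T)$ is finite and, by (2), no associated prime of $T$ contains $M$ (so $M^k\not\subseteq\mathfrak p$), prime avoidance supplies the required $\varepsilon$, and taking $k$ large keeps the perturbation from disturbing (i), (ii), or quasi-locality. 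Since $A$ then injects into $T/\mathfrak p$ for every minimal prime $\mathfrak p$ of $T$, $A$ is a domain, and by construction $\widehat A\cong T$. The genuine work is the simultaneous bookkeeping needed to make (i), (ii), the domain condition, and quasi-locality coexist through a transfinite recursion, to handle limit stages, and to verify the hypotheses of the completion lemma — the skeleton just described is routine. This converse is, of course, the content of Lech's construction in \cite{lech}.
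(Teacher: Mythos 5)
The paper does not prove this statement; it is imported verbatim from Lech \cite{lech} and used as a black box, so your attempt can only be measured against the cited construction. Your necessity direction is correct and complete: faithful flatness of $A\to\widehat{A}=T$ carries non-zero divisors of $A$ to non-zero divisors of $T$, which gives (1) (splitting into positive and zero characteristic as you do) and (2) (a nonzero element of the maximal ideal of $A$ is a regular element of $T$ lying in $M$).

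The sufficiency sketch, however, has a genuine gap exactly at the step you identify as the crux. Prime avoidance cannot ``supply the required $\varepsilon$'': avoiding the finitely many associated primes only guarantees $t+\varepsilon\notin\mathfrak{p}$, whereas keeping $R_\alpha[t+\varepsilon]\cap\mathfrak{p}=(0)$ requires $f(t+\varepsilon)\notin\mathfrak{p}$ for \emph{every} nonzero $f\in R_\alpha[X]$, i.e.\ that the image of $t+\varepsilon$ in $T/\mathfrak{p}$ be transcendental over the fraction field of the image of $R_\alpha$. The tool that secures this is not prime avoidance but a cardinality (coset-avoidance) argument: the elements of $T$ whose image in $T/\mathfrak{p}$ is algebraic over that fraction field form at most $\max(\aleph_0,|R_\alpha|)$ cosets of $\mathfrak{p}$, while $t+M^k$ maps onto a coset of a nonzero ideal of $T/\mathfrak{p}$, which has cardinality at least $c$ precisely because condition (2) makes every $\mathfrak{p}\in\Ass(T)$ nonmaximal (compare Lemma~\ref{lem:t_mod_p_uncountable}). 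This fix forces an invariant your recursion never tracks, namely $|R_\alpha|<|T|$ at every stage, which is in tension with surjectivity onto $T/M^2$ and is why such constructions order the enumeration so that each element has fewer than $|T/M^2|$ predecessors. Likewise, arranging $IT\cap A=IA$ for all finitely generated $I$ while preserving the domain condition and this cardinality bound is the real substance of Lech's argument, not routine bookkeeping. As written, your converse direction is an accurate description of the strategy but not a proof; it rests, as you acknowledge, on \cite{lech}.
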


A number of more recent results characterize completions of local domains with certain characteristics. Most recently, in \cite{SMALL19}, the authors characterize completions of domains with certain cardinalities and domains with spectra that have certain cardinalities. In particular, they prove the following results.

\begin{theorem}[\cite{SMALL19}, Theorem 2.13]\label{thm:small19_cardinality_domain_char}
Let $(T,M)$ be a complete local ring such that
\begin{enumerate}
    \item no integer of $T$ is a zero divisor, and
    \item unless equal to $(0)$, $M\notin\Ass(T)$.
\end{enumerate}
If $T/M$ is infinite, then $T$ is the completion of a local domain $A$ such that $\abs{A}=\abs{T/M}$. If $T/M$ is finite, then $T$ is the completion of a countable domain.
\end{theorem}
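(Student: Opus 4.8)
The plan is to obtain the lower bounds on $\abs{A}$ essentially for free and the upper bounds by an explicit construction of a local domain $A\subseteq T$ with $\widehat A=T$. By Lech's theorem (Theorem~\ref{thm:lech_char_domain}), hypotheses (1) and (2) already guarantee that $T$ is the completion of \emph{some} local domain, so the only new content here is the control of cardinality. For the lower bound, if $T$ is a field we may take $A=T$, so assume $\dim T\ge 1$. Hypothesis (1) forces $\Z$ to embed in $T$, so $\operatorname{char} T=0$; hence any local domain $A$ with $\widehat A=T$ contains $\Z$ and is therefore infinite, and moreover its residue field $A/(M\cap A)$ is isomorphic to $T/M$, so $\abs{A}\ge\abs{T/M}$. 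Thus $\abs{A}\ge\mathcal{L}:=\sup(\aleph_0,\abs{T/M})$ automatically, which is exactly the asserted lower bound in both the finite and the infinite residue field cases.

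For the upper bound I would build $A$ as the union of an increasing chain $(R_\alpha)$ of quasi-local subrings of $T$, of length $\mathcal{L}$, each with $R_\alpha\cap M$ as its maximal ideal, maintaining throughout the invariants: (i) $\abs{R_\alpha}\le\mathcal{L}$; and (ii) $Q\cap R_\alpha=(0)$ for every $Q\in\Ass(T)$. Invariant (ii) guarantees that every nonzero element of $R_\alpha$ is a nonzerodivisor of $T$, so each $R_\alpha$ — and hence $A$ — is a domain. One starts with $R_0=\Z_{(M\cap\Z)}\subseteq T$, which is countable, quasi-local, and satisfies (ii) because no nonzero integer lies in an associated prime. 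Along an enumeration of length $\mathcal{L}$ one then successively adjoins to the chain: for each coset of $T/M$, an element of $T$ mapping onto it (so that in the limit $A\to T/M$ is surjective); a finite generating set of $M$ contained in $T\setminus\bigcup_{Q\in\Ass(T)}Q$, which exists by prime avoidance (so that $(M\cap A)T=M$); and ``closing-off'' witnesses ensuring that the final ring satisfies $IT\cap A=I$ for every finitely generated ideal $I$ of $A$. The point is that before being adjoined, each element is first replaced by a perturbation of it — by an element of $M$, or of a suitable intersection of associated primes — chosen by prime avoidance so that invariant (ii) is preserved, and after each adjunction one localizes at the contraction of $M$ to restore quasi-locality, which does not disturb (ii). Invariant (i) persists because each stage adjoins at most countably many new generators and then localizes, and limit stages of a chain of length $\mathcal{L}$ keep cardinality at most $\mathcal{L}$; consequently $\abs{A}\le\mathcal{L}$.

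Granting such an $A$, the rest is routine. The property $IT\cap A=I$ for finitely generated $I$ forces $A$ to be Noetherian: for any ideal $J$ of $A$, the ideal $JT$ of the Noetherian ring $T$ is finitely generated by elements of $J$, and those same elements then generate $J$. Hence $A$ is a local domain; combining this with the surjectivity of $A\to T/M$ and with $(M\cap A)T=M$ gives that $A\to T/M^n$ is surjective for all $n$ and that $M^n\cap A=(M\cap A)^n$, whence $\widehat A=\varprojlim A/(M\cap A)^n\cong\varprojlim T/M^n=T$. I expect the main obstacle to be the interaction between the closing-off and invariant (ii): the closing-off has to be iterated transfinitely, since enlarging $A$ produces new finitely generated ideals whose intersection condition must also be arranged, and at every single adjunction the ring must stay a domain — which is exactly where hypothesis (2), $M\notin\Ass(T)$, is indispensable, as it gives $M\not\subseteq Q$ for each associated prime $Q$ and hence the room (prime avoidance over the finite set $\Ass(T)$) to perturb each chosen witness so that it avoids every associated prime without spoiling the relation it is meant to witness.
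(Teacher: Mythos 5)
First, a point of reference: the paper does not prove this statement at all --- it is quoted from \cite{SMALL19} (Theorem 2.13) and used as a black box --- so your argument can only be measured against the construction in that reference, and your overall architecture (lower bound from the surjection $A\to T/M$ together with infinitude of $A$; upper bound by a transfinite chain of quasi-local subrings $R_\alpha\subseteq T$ with $Q\cap R_\alpha=(0)$ for all $Q\in\Ass(T)$, closing off $IT\cap A=I$, then deducing Noetherianity and $\widehat{A}=T$ exactly as you do) is the standard Lech--Heitmann-style approach. One small slip: hypothesis (1) does not force $\operatorname{char} T=0$ (the ring $\mathbb{F}_p[[x]]$ satisfies it, with prime subring $\mathbb{Z}/p$), so your ``$\mathbb{Z}$ embeds in $T$'' argument is wrong; but the lower bound survives, since for $\dim T\geq 1$ a finite domain would be a field, hence complete and equal to $T$, and $\abs{A}\geq\abs{A/(M\cap A)}=\abs{T/M}$ always.

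The genuine gap is in the step you dismiss as ``chosen by prime avoidance so that invariant (ii) is preserved.'' Choosing the adjoined element $u=t+x$ outside $\bigcup_{Q\in\Ass(T)}Q$ controls only $u$ itself; invariant (ii) for the new ring requires $R[u]\cap Q=(0)$, i.e.\ that \emph{every} nonzero polynomial over $R$ evaluated at $u$ avoids $Q$. The standard way to arrange this is to choose $x$ (in $M$, $M^2$, or the relevant ideal) so that the image of $u$ in $T/Q$ is transcendental over the image of $R$, for each $Q\in\Ass(T)$, and the existence of such an $x$ is a counting argument, not finite prime avoidance: the elements of $T/Q$ algebraic over a subring of cardinality $\kappa$ number at most $\sup(\aleph_0,\kappa)$, while the available perturbations map onto a nonzero ideal of the domain $T/Q$, hence onto $\abs{T/Q}=\abs{T}\geq 2^{\aleph_0}$ elements (Lemma~\ref{lem:t_mod_p_uncountable}). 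So you need the cardinality invariant $\abs{R_\alpha}\leq\sup(\aleph_0,\abs{T/M})$ \emph{together with} the strict inequality $\sup(\aleph_0,\abs{T/M})<\abs{T}$, and the latter can fail: since $\abs{T}=\abs{T/M}^{\aleph_0}$ when $\dim T\geq 1$, taking $\abs{T/M}=2^{\aleph_0}$ gives $\abs{T/M}=\abs{T}=\abs{T/Q}$, and then no avoidance argument over $\abs{R_\alpha}$-many constraints is available once the chain has absorbed coset representatives of all of $T/M$. As written, your induction therefore does not close at its crucial step. The repair is a case split: if $\abs{T/M}=\abs{T}$ the theorem is immediate from Lech's theorem (any local domain $A$ with $\widehat{A}=T$ satisfies $\abs{T/M}\leq\abs{A}\leq\abs{T}=\abs{T/M}$), while if $\sup(\aleph_0,\abs{T/M})<\abs{T}$ the transcendence-mod-$Q$ counting above goes through at every stage; the same replacement of ``prime avoidance'' by this coset/transcendence argument is needed for the adjunction of the generators of $M$ (which must moreover be perturbed only within $M^2$ to keep generating $M$), for the coset representatives of $T/M$, and for the closing-off witnesses $t_i$ in an expression $c=\sum y_it_i$ with $c\in IT\cap R$.
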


\begin{theorem}[\cite{SMALL19}, Corollary 3.7]\label{thm:small19_domain_char_countable_spec}
Let $(T,M)$ be a complete local ring with $\dim T\geq 2$. Then $T$ is the completion of a local domain with a countable spectrum if and only if
\begin{enumerate}
    \item no integer of $T$ is a zero divisor,
    \item $M\notin\Ass(T)$, and
    \item $T/M$ is countable.
\end{enumerate}
\end{theorem}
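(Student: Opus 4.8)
The plan is to treat the two implications separately, taking as given Lech's characterization (Theorem~\ref{thm:lech_char_domain}) and the cardinality-controlled construction of Theorem~\ref{thm:small19_cardinality_domain_char}. For the necessity direction, suppose $T=\widehat{A}$ for a local domain $A$ with $\Spec(A)$ countable. Since $T$ is then the completion of a local domain, Theorem~\ref{thm:lech_char_domain} immediately gives conditions (1) and (2); because $\dim T\geq 2$ we have $M\neq(0)$, so the clause ``unless equal to $(0)$'' is vacuous and (2) reads $M\notin\Ass(T)$. The substantive point is condition (3). Since completion preserves both the residue field and the Krull dimension, $A/\mathfrak{m}_A\cong T/M$ and $\dim A=\dim T\geq 2$, so it suffices to prove that a Noetherian local domain $(A,\mathfrak{m})$ with $\dim A\geq 2$ has at least $\abs{A/\mathfrak{m}}$ prime ideals; the contrapositive then forces $T/M$ to be countable whenever $\Spec(A)$ is.

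To establish that bound I would first choose $0\neq x\in\mathfrak{m}$ and then, by prime avoidance, an element $y\in\mathfrak{m}$ lying outside every (height-one) minimal prime of $(x)$, which is possible precisely because $\dim A\geq 2$ keeps $\mathfrak{m}$ from being one of these finitely many primes; one checks that every minimal prime over $(x,y)$ then strictly contains a height-one minimal prime of $(x)$, so $\height(x,y)=2$. Writing $k=A/\mathfrak{m}$, for each residue class I pick a representative $\lambda\in A$ and choose a minimal prime $\mathfrak{q}_\lambda$ over the principal ideal $(x+\lambda y)$. Each $x+\lambda y$ is a nonzero nonunit---nonzero since $x+\lambda y=0$ would give $(x,y)=(y)$ of height at most one---so $\mathfrak{q}_\lambda$ has height exactly one by Krull's principal ideal theorem. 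The crucial observation is that distinct residue classes yield distinct primes: if $\mathfrak{q}_\lambda=\mathfrak{q}_\mu=\mathfrak{q}$ with $\lambda-\mu\notin\mathfrak{m}$, then $(\lambda-\mu)y\in\mathfrak{q}$ while $\lambda-\mu$ is a unit, forcing $y\in\mathfrak{q}$, hence $x\in\mathfrak{q}$, hence $(x,y)\subseteq\mathfrak{q}$, so $\mathfrak{q}$ has height at least $2$, contradicting that it has height one. This exhibits $\abs{k}$ distinct height-one primes, so $\abs{\Spec(A)}\geq\abs{k}=\abs{T/M}$, as required.

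For sufficiency, assume (1), (2), and (3). Conditions (1) and (2) are exactly the hypotheses of Theorem~\ref{thm:small19_cardinality_domain_char}, and since $\dim T\geq 2$ again $M\neq(0)$. As $T/M$ is countable, that theorem yields a local domain $A$ with $\widehat{A}=T$ and $\abs{A}=\abs{T/M}$ when $T/M$ is infinite, and a countable domain $A$ when $T/M$ is finite; in either case $A$ is a countable local domain. It then remains only to note that a countable Noetherian ring has countable spectrum: every ideal is finitely generated, so sending a finite tuple of elements to the ideal it generates surjects the countable set of finite tuples of $A$ onto the set of ideals of $A$. Hence $A$ has only countably many ideals, a fortiori countably many primes, and $T$ is the completion of a local domain with countable spectrum.

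Granting Theorem~\ref{thm:small19_cardinality_domain_char}, the sufficiency direction is essentially immediate, so I expect the real work to be the necessity of (3). The one delicate step there is selecting $x,y$ with $\height(x,y)=2$ and verifying that the chosen height-one minimal primes are genuinely distinct as $\lambda$ ranges over residue representatives; once that is in place, the rest reduces to Lech's theorem and an elementary cardinality count.
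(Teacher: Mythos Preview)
This statement is not proved in the paper: it appears in Section~\ref{sec:background} as a background result quoted from \cite{SMALL19}, Corollary~3.7, with no proof supplied. So there is no ``paper's own proof'' to compare against.

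That said, your argument is correct and self-contained. The sufficiency direction is exactly the intended reduction to Theorem~\ref{thm:small19_cardinality_domain_char} together with the elementary fact that a countable Noetherian ring has only countably many ideals. For necessity, your construction of $|A/\mathfrak m|$ pairwise distinct height-one primes via the pencil $x+\lambda y$ is a clean and standard way to bound $|\Spec(A)|$ from below; the verifications (prime avoidance to get $\height(x,y)=2$, Krull's principal ideal theorem for $\height\mathfrak q_\lambda=1$, and the unit argument for distinctness) are all sound. The only cosmetic point is that when $\lambda$ represents the zero class you may as well take $\lambda=0$, so that $x+\lambda y=x\neq 0$ is immediate; your general argument that $x+\lambda y=0$ forces $(x,y)=(y)$ already covers this, so nothing is missing.
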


Throughout this paper, we show that rings we construct have certain given completions. In order to do this, we make use of the following results, which give sufficient conditions.

\begin{proposition}[\cite{heitmann}, Proposition 1]
If $(R,R\cap M)$ is a quasi-local subring of a complete local ring $(T,M)$, the map $R\to T/M^2$ is onto, and $IT\cap R=IR$ for every finitely generated ideal $I$ of $R$, then $R$ is Noetherian and the natural homomorphism $\widehat{R}=T$ is an isomorphism.
\end{proposition}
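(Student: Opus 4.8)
The plan is to bootstrap from the two hypotheses --- surjectivity of $R \to T/M^2$ and the contraction condition $IT \cap R = IR$ --- by first upgrading the surjectivity to every quotient $T/M^n$, then identifying the $(R\cap M)$-adic filtration of $R$ with the trace of the $M$-adic filtration of $T$, and finally reading off both the Noetherian conclusion and the isomorphism $\widehat{R}\cong T$.

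First I would fix a convenient generating set for $M$. Because $T$ is Noetherian, $M/M^2$ is a finite-dimensional $T/M$-vector space; lifting a basis to $T$ and then, using that $R \to T/M^2$ is onto, lifting further into $R$ produces elements $r_1,\dots,r_k\in R$ whose images generate $M/M^2$, hence (Nakayama) generate $M$ as an ideal of $T$. Then each $M^n$ is generated over $T$ by the degree-$n$ monomials in $r_1,\dots,r_k$, all of which lie in $R$. With this I would prove by induction on $n$ that $R\to T/M^n$ is onto: given $t\in T$, the inductive hypothesis gives $r\in R$ with $t-r\in M^n$, so $t-r=\sum_{|\alpha|=n} r^{\alpha}t_\alpha$ with $t_\alpha\in T$; choosing $s_\alpha\in R$ with $t_\alpha-s_\alpha\in M$ (the case $n=1$), we get $t-\bigl(r+\sum_{|\alpha|=n} r^\alpha s_\alpha\bigr)\in M^{n+1}$ with $r+\sum_{|\alpha|=n} r^\alpha s_\alpha\in R$.

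Next I would exploit the contraction hypothesis twice. Applied to the finitely generated ideal $I=(r_1,\dots,r_k)R$, whose extension $IT$ equals $M$, it gives $R\cap M = IT\cap R = (r_1,\dots,r_k)R$; applied to the finitely generated ideal $(r_1,\dots,r_k)^nR$, whose extension is $M^n$, it gives $(R\cap M)^n = M^n\cap R$ for all $n$. Writing $\mathfrak{m}=R\cap M$, it follows that each map $R/\mathfrak{m}^n\to T/M^n$ is injective (its kernel is $(M^n\cap R)/\mathfrak{m}^n=0$) and, by the previous paragraph, surjective, hence an isomorphism; moreover $\bigcap_n\mathfrak{m}^n = R\cap\bigcap_n M^n=0$ by the Krull intersection theorem, so $R\hookrightarrow T$. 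To see $R$ is Noetherian, let $I$ be any ideal of $R$: since $T$ is Noetherian, $IT$ is finitely generated, and writing finitely many generators of $IT$ as $T$-linear combinations of elements of $I$ yields a finitely generated ideal $I_0\subseteq I$ of $R$ with $I_0T=IT$; then any $x\in I$ lies in $I_0T\cap R = I_0R = I_0$ by the contraction hypothesis, so $I=I_0$ is finitely generated. Hence $(R,\mathfrak{m})$ is local, and passing to inverse limits in the compatible isomorphisms $R/\mathfrak{m}^n\cong T/M^n$ identifies $\widehat{R}=\varprojlim R/\mathfrak{m}^n$ with $\varprojlim T/M^n = T$, compatibly with the inclusion $R\hookrightarrow T$, which is the asserted natural isomorphism.

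I expect the step requiring the most care to be the induction showing $R\to T/M^n$ is onto --- specifically the reduction to a generating set of $M$ lying inside $R$, which is precisely where the hypothesis on $T/M^2$ does its work; after that, the identity $\mathfrak{m}^n = M^n\cap R$ and the Noetherian argument are fairly direct consequences of the contraction condition together with the Noetherianity of $T$.
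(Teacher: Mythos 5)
Your argument is correct and complete: lifting generators of $M/M^2$ into $R$, bootstrapping surjectivity onto every $T/M^n$, using the contraction hypothesis to get $R\cap M=(r_1,\dots,r_k)R$ and $(R\cap M)^n=M^n\cap R$, deducing Noetherianity from $I_0T=IT$ together with $I_0T\cap R=I_0$, and passing to inverse limits is exactly the standard proof of this result. Note that the paper itself gives no proof here --- it quotes the statement from Heitmann's Proposition 1 --- and your argument is essentially the one found in that source, so there is nothing to flag.
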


\begin{proposition}[\cite{paper1}, 
Corollary 2.5]\label{cor:basering_complete_macine}
Suppose $(T,M)$ is a complete local ring, and $(R,R\cap M)$ is a local subring of $T$ such that $\widehat{R}=T$. Let $(A,A\cap M)$ be a quasi-local subring of $T$ such that $R\subseteq A$, and such that, for every finitely generated ideal $I$ of $A$, $IT\cap A=IA$. Then 
$A$ is Noetherian and $\widehat{A}=T$.
\end{proposition}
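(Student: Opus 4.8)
The plan is to reduce to Heitmann's Proposition~1 (the first displayed proposition above), applied with $A$ playing the role of $R$. To apply it we must check three things about $A$: that $(A,A\cap M)$ is a quasi-local subring of the complete local ring $(T,M)$; that the map $A\to T/M^2$ is onto; and that $IT\cap A=IA$ for every finitely generated ideal $I$ of $A$. The first and third are hypotheses of the statement, so everything comes down to showing that $A\to T/M^2$ is surjective.

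First I would show that $R\to T/M^2$ is surjective, using $\widehat R=T$. Let $\mathfrak m=R\cap M$, the maximal ideal of $R$. The natural map $\widehat R\to T$ is an isomorphism, and it sends the maximal ideal $\mathfrak m\widehat R$ of $\widehat R$ to $M$; hence it induces isomorphisms $R/\mathfrak m^n\cong\widehat R/\mathfrak m^n\widehat R\cong T/M^n$ for every $n\geq 1$, compatible with the canonical maps out of $R$. In particular $R\to T/M^n$ is onto for all $n$, so $R\to T/M^2$ is onto. (Equivalently: $R$ is dense in its completion $\widehat R=T$, while $T/M^2$ is discrete.) Since $R\subseteq A\subseteq T$, the image of $A$ in $T/M^2$ contains the image of $R$, which is all of $T/M^2$, so $A\to T/M^2$ is onto.

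With all three hypotheses verified, Heitmann's Proposition~1 applies to $(A,A\cap M)$ and gives at once that $A$ is Noetherian and that $\widehat A\to T$ is an isomorphism, i.e. $\widehat A=T$.

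I do not anticipate a real difficulty here; the only content is that surjectivity onto $T/M^2$ is automatically inherited by any ring $A$ sitting between $R$ and $T$ once $\widehat R=T$ has been arranged, so that in applications one need only track the intersection condition $IT\cap A=IA$ as $R$ is enlarged toward $T$. The one point to handle carefully is that $A\cap M$ really is the maximal ideal of $A$, but this is exactly the content of the hypothesis that $(A,A\cap M)$ is quasi-local.
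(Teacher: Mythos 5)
Your proposal is correct: the paper itself states this result as a citation of \cite{paper1}, Corollary 2.5 without reproducing a proof, and the intended argument is exactly the one you give, namely verifying the hypotheses of Heitmann's Proposition~1 for $(A,A\cap M)$, with surjectivity of $A\to T/M^2$ inherited from the surjectivity of $R\to T/M^2$ coming from $R/(R\cap M)^n\cong T/M^n$. The one point needing care, that the isomorphism $\widehat{R}\cong T$ identifies $(R\cap M)T$ with $M$ so that these quotient identifications are valid, is handled correctly in your write-up.
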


Note that if $R$ is a local ring and $\widehat{R}=T$, then $T$ is a faithfully flat extension of $R$. It follows that if $I$ is an ideal of $R$, then $IT\cap R=I$. In addition, $T$ being a faithfully flat extension of $R$ implies that $R$ and $T$ satisfy the going-down theorem, so if $P\in\Spec(T)$, then $\height(P\cap R)\leq\height(P)$. We also have the following relationship between prime ideals of a ring and prime ideals of the ring's completion.

\begin{lemma}[\cite{SMALL19.1}, Lemma 2.6]\label{lem:SMALL19_minprime}
Let $(T,M)$ be the completion of a local ring $(A,A\cap M)$ and let $P$ be a prime ideal of $A$. Then for $Q\in\Min(PT)$, we have $Q\cap A=P$.
\end{lemma}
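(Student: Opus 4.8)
The plan is to deduce the statement from two facts recorded just before the lemma: that $T=\widehat A$ is a faithfully flat extension of $A$, and that consequently the going-down theorem holds for $A\subseteq T$. The minimality of $Q$ over $PT$ will then do the rest.

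First I would dispatch the easy inclusion $P\subseteq Q\cap A$. Since $Q$ is a prime of $T$ containing $PT$, it contains $P$, and intersecting with $A$ gives $P = P\cap A\subseteq Q\cap A$; equivalently, faithful flatness yields $PT\cap A=P$, so $Q\cap A\supseteq PT\cap A=P$. Thus $Q\cap A$ is a prime of $A$ lying above $P$, and it remains to show this containment is an equality.

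For the reverse inclusion I would apply the going-down theorem to the chain of primes $P\subseteq Q\cap A$ of $A$ together with the prime $Q$ of $T$, which lies over $Q\cap A$. Going-down produces a prime $Q'\in\Spec(T)$ with $Q'\subseteq Q$ and $Q'\cap A=P$. Since $P\subseteq Q'$ and $Q'$ is an ideal of $T$, we get $PT\subseteq Q'\subseteq Q$. Now the hypothesis that $Q$ is a minimal prime over $PT$ forces $Q'=Q$, whence $Q\cap A=Q'\cap A=P$, as claimed.

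I do not anticipate a genuine obstacle: the only point needing a moment's care is verifying that the prime $Q'$ supplied by going-down actually contains $PT$, so that the minimality of $Q$ can legitimately be invoked — and this is immediate from $P\subseteq Q'$. In fact the argument uses nothing about completions beyond the faithful flatness of $T$ over $A$ noted in the preceding paragraph, so the same proof applies verbatim to any faithfully flat local extension.
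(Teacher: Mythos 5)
Your argument is correct and is essentially the standard one: the paper states this lemma as a citation to \cite{SMALL19.1} without reproducing a proof, and your use of faithful flatness (giving $PT\cap A=P$) together with going-down and the minimality of $Q$ over $PT$ is exactly the argument supported by the facts recorded in the paragraph preceding the lemma. No gaps; the step you flag (that $Q'\supseteq PT$, so minimality applies) is indeed the only point needing care, and you handle it correctly.
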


We end this section with two results on the cardinalities of local rings and their quotient rings.

\begin{proposition}[\cite{paper1}, Proposition 2.10]\label{prop:t/m^2_countable}
Let $(T,M)$ be a local ring. If $T/M$ is finite, then $T/M^2$ is finite. If $T/M$ is infinite, then $\abs{T/M^2}=\abs{T/M}$.
\end{proposition}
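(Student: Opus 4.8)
The plan is to use Noetherianity of $T$ together with the short exact sequence of $T/M$-modules $0 \to M/M^2 \to T/M^2 \to T/M \to 0$, thereby reducing the whole statement to a cardinality count of $M/M^2$. Write $k = T/M$ for the residue field. Since $T$ is Noetherian, $M$ is finitely generated, say $M = (m_1, \dots, m_n)$, and then the images of $m_1, \dots, m_n$ span $M/M^2$ as a $k$-vector space. Hence $M/M^2$ is a quotient of $k^n$, so $\abs{M/M^2} \le \abs{k}^n$.

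Now I would split into the two cases. If $k$ is finite, then $\abs{k}^n$ is finite, so $M/M^2$ is finite; since $T/M^2$ surjects onto $T/M$ with kernel $M/M^2$, the set $T/M^2$ is a finite union of finite cosets and is therefore finite. If $k$ is infinite, then $\abs{k}^n = \abs{k}$ by infinite cardinal arithmetic, so $\abs{M/M^2} \le \abs{k}$; on the other hand the surjection $T/M^2 \twoheadrightarrow T/M$ forces $\abs{T/M^2} \ge \abs{k}$. Partitioning $T/M^2$ into the $\abs{k}$ fibers of this surjection, each a coset of $M/M^2$ and hence of cardinality $\le \abs{k}$, yields $\abs{T/M^2} \le \abs{k}\cdot\abs{k} = \abs{k}$. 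Combining the two inequalities gives $\abs{T/M^2} = \abs{k} = \abs{T/M}$.

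I do not anticipate a genuine obstacle; the argument is short once the exact sequence is in place. The only points deserving care are the appeals to infinite cardinal arithmetic (that $\kappa^n = \kappa$ and $\kappa \cdot \kappa = \kappa$ for an infinite cardinal $\kappa$) and, more substantively, the role of Noetherianity: it is exactly this hypothesis that makes $M/M^2$ a \emph{finite-dimensional} $k$-vector space, and without it the infinite-residue-field conclusion could fail.
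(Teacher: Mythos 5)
Your proof is correct: the exact sequence $0\to M/M^2\to T/M^2\to T/M\to 0$, finite generation of $M$ (so that $M/M^2$ is a finitely generated, hence finite-dimensional, vector space over $T/M$), and the cardinal arithmetic $\kappa^n=\kappa\cdot\kappa=\kappa$ for infinite $\kappa$ are exactly what is needed, and the finite-residue-field case follows from the same coset decomposition. The present paper only quotes this proposition from \cite{paper1} without reproducing an argument, and yours is essentially the standard proof given there, so there is nothing to flag beyond the points of care you already identify.
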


Let $c$ denote the cardinality of $\R$.

\begin{lemma}[\cite{dundon}, Lemma 2.2]\label{lem:t_mod_p_uncountable}
Let $(T,M)$ be a complete local ring with $\dim T\geq 1$. Let $P$ be a nonmaximal prime ideal of $T$. Then, $\abs{T/P}=\abs{T}\geq c$.
\end{lemma}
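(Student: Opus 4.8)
The plan is to sandwich both $\abs{T}$ and $\abs{T/P}$ between $\abs{T/M}^{\aleph_0}$ and itself (reading $\abs{T/M}^{\aleph_0}$ as $c$ when $T/M$ is finite), so that, since $\abs{T/P}\leq\abs{T}$ always, both cardinalities coincide with $\abs{T/M}^{\aleph_0}\geq c$. First I would note that $P\subsetneq M$ (as $P$ is nonmaximal) forces $\dim(T/P)\geq 1$, and that $T/P$ is a complete local domain with maximal ideal $\mathfrak{m}=M/P$ and residue field $(T/P)/\mathfrak{m}\cong T/M$. For the upper bound on $\abs{T}$: completeness gives $T\cong\varprojlim_n T/M^n$, so $T$ embeds in $\prod_{n\geq 1}T/M^n$ and $\abs{T}\leq\prod_{n\geq 1}\abs{T/M^n}$. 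Since each $M^i/M^{i+1}$ is a finitely generated $T/M$-module, an induction on $n$ starting from Proposition~\ref{prop:t/m^2_countable} shows that $\abs{T/M^n}=\abs{T/M}$ when $T/M$ is infinite and that $T/M^n$ is finite when $T/M$ is; hence $\abs{T}\leq\abs{T/M}^{\aleph_0}$ in the first case and $\abs{T}\leq\aleph_0^{\aleph_0}=c$ in the second.

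For the lower bound on $\abs{T/P}$, write $S=T/P$ and fix $x\in\mathfrak{m}$ with $x\neq 0$, which exists because $\dim S\geq 1$. Let $\Lambda\subseteq S$ be a set of coset representatives for $\mathfrak{m}$ in $S$, with $0\in\Lambda$, so $\abs{\Lambda}=\abs{T/M}$ and any two distinct elements of $\Lambda$ differ by a unit of $S$. Each sequence $(\lambda_i)_{i\geq 0}\in\Lambda^{\N}$ determines a convergent series $\sum_{i\geq 0}\lambda_i x^i\in S$, and I claim the resulting map $\Lambda^{\N}\to S$ is injective: if two sequences had equal image, then subtracting them and letting $j$ be the least index at which they disagree, one can factor $x^j$ out of the vanishing series, and since $S$ is a domain with $x^j\neq 0$ this forces $\lambda_j-\mu_j\in\mathfrak{m}$, contradicting that distinct elements of $\Lambda$ differ by a unit. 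Therefore $\abs{T/P}=\abs{S}\geq\abs{\Lambda}^{\aleph_0}=\abs{T/M}^{\aleph_0}\geq c$.

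Combining the two bounds gives $\abs{T/P}=\abs{T}=\abs{T/M}^{\aleph_0}\geq c$ when $T/M$ is infinite, and $\abs{T/P}=\abs{T}=c$ when $T/M$ is finite (since then $2\leq\abs{T/M}<\aleph_0$, so $\abs{T/M}^{\aleph_0}=c$). I expect the delicate point to be the injectivity of the power-series map in the second paragraph: it relies on $S=T/P$ being a domain and on the availability of a nonzero element of its maximal ideal, which is exactly where the hypotheses that $P$ is \emph{prime} and that $\dim T\geq 1$ (forcing $\mathfrak{m}=M/P\neq 0$) are used.
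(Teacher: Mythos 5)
Your proof is correct. Note that the paper does not prove this lemma at all --- it is imported verbatim as Lemma 2.2 of the cited reference \cite{dundon} --- so there is no internal argument to compare against; your write-up is essentially the standard proof underlying that citation. Both halves check out: the upper bound $\abs{T}\leq\abs{T/M}^{\aleph_0}$ (or $\leq c$ when $T/M$ is finite) via the embedding $T\hookrightarrow\prod_n T/M^n$ together with $\abs{T/M^n}=\abs{T/M}$, and the lower bound via the injection $\Lambda^{\mathbb{N}}\to T/P$, $(\lambda_i)\mapsto\sum\lambda_i x^i$, which correctly isolates where primeness of $P$ (so that $T/P$ is a domain and $x^j\neq 0$) and nonmaximality of $P$ (so that $M/P\neq 0$) are used; combining with $\abs{T/P}\leq\abs{T}$ gives the equality and the bound $\geq c$ in both residue-field cases. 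The only steps left implicit are routine: well-definedness of the sum (uniqueness of limits, from $\bigcap_n (M/P)^n=(0)$ by Krull's intersection theorem) and, in the factoring step, continuity of multiplication by $x^j$ plus the fact that the maximal ideal is closed in the adic topology, so the tail $\sum_{i>j}(\lambda_i-\mu_i)x^{i-j}$ indeed lies in $M/P$; these are standard and do not constitute gaps.
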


\section{Construction}\label{sec:construction}

Given a countable local domain $(S,S\cap M)$ with completion $(T,M)$, we show in this section how to construct an uncountable local domain $(B,B\cap M)$ such that $S\subseteq B \subseteq T$, ideals of $B$ are extended from $S$, and $\widehat{B}=T$. In the following section, we show how this construction can be used to prove our main results. In particular, we provide specific countable domains from which to begin the construction, and show that the resulting uncountable local domain satisfies certain desirable conditions, including having a countable spectrum.

Before we begin our construction, we introduce some definitions, inspired by definitions from \cite{loepp2018uncountable}.

\begin{definition}\label{defn:s_subring}
Suppose $(T,M)$ is a complete local ring and $(S,S\cap M)$ is a countable local domain such that $S\subseteq T$ and $\widehat{S}=T$. A ring $R$ is an \textit{$S$-subring} of $T$ if $S\subseteq R\subseteq T$, and, for any $r\in R\cap M$, there exist $c\in S\cap M$ and a unit $d\in T$ such that $r=cd$.
\end{definition}

\begin{definition}\label{defn:cs_subring}
Suppose $(T,M)$ is a complete local ring and $(S,S\cap M)$ is a countable local domain such that $S\subseteq T$ and $\widehat{S}=T$. A \textit{$CS$-subring} $R$ of $T$ is a countable quasi-local $S$-subring of $T$ with maximal ideal $R\cap M$.
\end{definition}

Note that, if $(T,M)$ is a complete local ring and $(S,S\cap M)$ is a countable local domain such that $S\subseteq T$ and $\widehat{S}=T$, then $S$ is itself a $CS$-subring of $T$. In addition, the union of an ascending chain of $S$-subrings of $T$ is an $S$-subring of $T$, the countable union of an ascending chain of $CS$-subrings of $T$ is also a $CS$-subring of $T$, and the uncountable union of an ascending chain of $CS$-subrings is a quasi-local $S$-subring of $T$.






The next lemma establishes that $S$-subrings are in fact domains.

\begin{lemma}\label{lem:s_subring_domain}
Suppose $(T,M)$ is a complete local ring and $(S,S\cap M)$ is a countable local domain such that $S\subseteq T$ and $\widehat{S}=T$. Let $R$ be an $S$-subring of $T$. Then, $R$ is an integral domain.
\end{lemma}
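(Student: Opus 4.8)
The plan is to prove that $R$ has no zero divisors by a short case analysis driven entirely by the defining condition of an $S$-subring. First I would record that $R$ is not the zero ring: since $S\subseteq R$ and $S$ is a domain, $1\neq 0$ already holds in $S$, hence in $R$.

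Next, suppose $x,y\in R$ with $xy=0$; the goal is to show $x=0$ or $y=0$. The key structural fact I would use is that $T$ is quasi-local with maximal ideal $M$, so every element of $T$ lying outside $M$ is a unit of $T$. Hence if $x\notin R\cap M$ (equivalently, $x\notin M$ since $x\in R$), then $x$ is a unit of $T$, and multiplying $xy=0$ by $x^{-1}\in T$ forces $y=0$; symmetrically, if $y\notin M$ then $x=0$. This leaves the case $x,y\in R\cap M$, where I would invoke Definition~\ref{defn:s_subring} to write $x=c_1d_1$ and $y=c_2d_2$ with $c_1,c_2\in S\cap M$ and $d_1,d_2$ units of $T$.

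In that remaining case, $0=xy=(c_1c_2)(d_1d_2)$ and $d_1d_2$ is a unit of $T$, so $c_1c_2=0$ in $T$; since $S$ is a subring of $T$, this equality holds in $S$, and because $S$ is a domain one of $c_1,c_2$ must vanish, giving $x=0$ or $y=0$. Thus $R$ is an integral domain.

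I do not anticipate any genuine obstacle here; the only two points requiring a moment's care are that non-units of $T$ automatically lie in $M$ (which is exactly the quasi-locality of $T$) and that a product of elements of $S$ that vanishes when computed in $T$ already vanishes in $S$. It is worth noting that neither the countability of $S$ nor the hypothesis $\widehat{S}=T$ is used in this argument—only that $S$ is a domain sitting inside the quasi-local ring $T$.
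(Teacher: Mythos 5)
Your proof is correct, but it takes a different route from the paper's. You argue directly with zero divisors: after disposing of the trivial case where one factor lies outside $M$ (hence is a unit of $T$), you decompose \emph{both} factors $x=c_1d_1$, $y=c_2d_2$ via the $S$-subring property, peel off the unit $d_1d_2$, and land in the product $c_1c_2=0$ inside the domain $S$. The paper instead proves the statement by showing that $P\cap R=(0)$ for every $P\in\Ass(T)$: given $r\in P\cap R$, write $r=cd$ with $d$ a unit, so $c\in P\cap S$, and then use that $T=\widehat{S}$ is (faithfully) flat over the domain $S$ to conclude $P\cap S=(0)$; domain-ness of $R$ then follows since the zero divisors of $T$ lie in the union of its associated primes. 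Your argument is more elementary and, as you note, uses neither the countability of $S$ nor $\widehat{S}=T$ — only that $S$ is a domain inside the quasi-local ring $(T,M)$ and the defining decomposition of elements of $R\cap M$ — so it shows the lemma holds in somewhat greater generality. The paper's argument, at the cost of invoking flatness and associated primes, establishes the slightly stronger fact that an $S$-subring meets every associated prime of $T$ trivially, though that extra information is not used elsewhere in the paper.
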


\begin{proof}
To prove this, we show that if $P\in\Ass(T)$, then $P\cap R=(0)$. Suppose that $r\in P\cap R \subseteq M\cap R$. Since $R$ is an $S$-subring, there exist $c\in S\cap M$ and $d$, a unit in $T$, such that $r=cd$. Since $d$ is a unit, we have that $c\in P$, and so $c\in P\cap S$. However, $S$ is a domain whose completion is $T$, so $P\cap S=(0)$, and therefore we have that $c=0$. Thus, $r=0$, so $P\cap R=(0)$, and it follows that $R$ is an integral domain.
\end{proof}

Given a countable local domain $(S,S\cap M)$ with completion $(T,M)$, we construct an ascending chain of $CS$-subrings of $T$, starting from $S$, by adjoining elements $u \in T$ of a specific form. Every time we adjoin such a $u$, we will show that we not only obtain another $CS$-subring of $T$, but also that we are indeed adding a new element from $T$. If we do this uncountably many times and then take the union of these rings, we obtain an uncountable $S$-subring of $T$ that we call $(A,A\cap M)$. Finally, we adjoin elements of $T$ to $A$ so that we obtain an $S$-subring of $T$ that is an uncountable local domain with completion $T$ and such that its ideals are extended from $S$.

We begin by describing the elements that we adjoin uncountably many times to the countable local domain $S$. Let $R$ be a $CS$-subring of $T$, and consider $u$ of the form
\[u=1+A_1z_1+A_2z_1z_2+\cdots+A_kz_1z_2\cdots z_k+\cdots,\]
where $(A_i)_{i\in\Z^+}\subseteq R\cap M$ and $(z_i)_{i\in\Z^+}\subseteq S\cap M$. Since $A_i,z_i\in M$ for all $i$, we have that the $k^{\text{th}}$ term in the series is in $M^k$ for $k\geq 2$, so $u$ is indeed an element of $T$. In addition, since every term except for $1$ is an element of $M$ and $1\notin M$, we have that $u$ is a unit in $T$. For $k\in\Z^+$, define
\[M_k\coloneqq 1+A_1 z_1+\cdots +A_{k-1}z_1\cdots z_{k-1}\]
and
\[K_k\coloneqq A_kz_1\cdots z_{k-1}+A_{k+1}z_1\cdots z_{k-1}z_{k+1}+\cdots.\]
Note that we can now express $u$ as $M_k+z_k K_k$ for any $k\in\Z^+$. The next lemma, which is a slight modification of \cite[Lemma 5.1]{loepp2018uncountable}, describes how to adjoin such an element $u$ to a $CS$-subring $R$ of $T$ so that the resulting ring is a $CS$-subring of $T$. We achieve this via an algorithm by choosing the elements of the sequence $(z_i)_{i\in\Z^+}$ given a sequence $(A_i)_{i\in\Z^+}$ satisfying a specific property.

\begin{lemma}\label{lem:z_algo}
Suppose $(T,M)$ is a complete local ring with $\dim T\geq 1$ and $(S,S\cap M)$ is a countable local domain such that $S\subseteq T$ and $\widehat{S}=T$. Let $(R, R \cap M)$ be a $CS$-subring of $T$. Then, for any $(A_i)_{i\in\Z^+}\subseteq R\cap M$ satisfying the property that whenever $i>j$, there exists a $k$ such that $A_i\in M^k$ and $A_j\notin M^k$, there exists a sequence $(z_i)_{i\in\Z^+}\subseteq S\cap M$ with $z_i \neq 0$ for every $i$ such that, if $u\in T$ is of the form
\[u=1+A_1z_1+A_2z_1z_2+\cdots+A_kz_1z_2\cdots z_k+\cdots,\]
then $R[u]_{(R[u]\cap M)}$ is a $CS$-subring of $T$.
\end{lemma}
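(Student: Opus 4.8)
My plan is to build the sequence $(z_i)_{i\in\Z^+}\subseteq S\cap M$ recursively, at each stage choosing $z_n$ so that two things are simultaneously true: (i) the partial data committed so far does not force $u$ to land in any "bad" subset of $T$ that would destroy the $CS$-subring property, and (ii) the resulting ring $R[u]_{(R[u]\cap M)}$ remains countable, quasi-local with maximal ideal $R[u]\cap M$, and is an $S$-subring. The key structural observation to exploit is the identity $u=M_k+z_kK_k$: since $M_k\in R$ and $z_k\in S\cap M$, once we show $K_k$ itself is (up to a unit of $T$) of the required form, any element of $R[u]\cap M$ will be expressible as $c\cdot(\text{unit})$ with $c\in S\cap M$, giving the $S$-subring property. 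So the heart of the argument is a careful bookkeeping of what $R[u]$ looks like: a typical element is a polynomial expression in $u$ with coefficients in $R$, and using $u=M_k+z_kK_k$ repeatedly lets us rewrite such expressions modulo high powers of $M$.

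Concretely, I would first enumerate (using countability of $R$, hence of $R[x]$) all the potential "obstructions" — finitely generated ideals $I$ of $R[u]$ for which we need $IT\cap R[u]=IR[u]$ eventually (to later invoke Proposition~\ref{cor:basering_complete_macine}), nonzero polynomials that we must keep $u$ from being a root of (to preserve transcendence/injectivity so that we are genuinely adding a new element), and the countably many prime ideals of $T$ lying over each prime of $R$. The hypothesis on $(A_i)$ — that for $i>j$ there is $k$ with $A_i\in M^k$, $A_j\notin M^k$ — is exactly what guarantees the tail $K_k$ does not collapse: it ensures the "leading" term $A_k z_1\cdots z_{k-1}$ of $K_k$ is not absorbed by later terms, so $K_k = (A_k + (\text{higher order}))z_1\cdots z_{k-1}$ is a nonzero multiple of an element of $S\cap M$ times a unit, and in particular $u$ is not prematurely determined. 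At stage $n$, having chosen $z_1,\dots,z_{n-1}$, I would choose $z_n\in S\cap M$ nonzero avoiding a countable collection of proper subsets of $T$ (each of the form "$z_n$ lies in a specified proper ideal" or "$z_n$ makes a specified nonzero element vanish"), which is possible because $S$ is a domain and $|S\cap M|=|S|$ is infinite — wait, $S$ is only countable, so I must be careful that I am avoiding only finitely many constraints at each stage, or that the constraints are avoidance of proper sub-$S$-modules of an infinite set; this is the standard trick and works because each individual constraint rules out at most the elements of a proper ideal of the domain $S$, and a countable union of such — handled by a diagonalization across stages — still leaves room since at stage $n$ only finitely many constraints are active.

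After the construction, I would verify in turn: $R[u]\subseteq T$ and is countable (clear, $u\in T$ and $R$ countable); $R[u]\cap M$ is the unique maximal ideal of the localization $R[u]_{(R[u]\cap M)}$ (automatic for a localization at a prime); that $R[u]_{(R[u]\cap M)}$ is an $S$-subring, i.e. every element of its maximal ideal is $c\cdot d$ with $c\in S\cap M$, $d\in T^\times$ — here I use that an element of $R[u]\cap M$ can be written, via $u = M_k + z_kK_k$ for large $k$, as (element of $R\cap M$ or $0$) plus $z_k\cdot(\text{stuff})$, reduce to the $S$-subring property of $R$ together with the factored form of $K_k$, and peel off a unit; and finally that $u\notin R$ — i.e. we genuinely added a new element — which is where the avoidance conditions and the $(A_i)$ hypothesis pay off, since if $u$ had been forced into $R$ or into an algebraic relation, one of the countably many obstructions we dodged would have been triggered. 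The main obstacle I anticipate is the $S$-subring verification: showing that an arbitrary polynomial-in-$u$ element of $R[u]$ that happens to lie in $M$ factors as $c\cdot(\text{unit})$ with $c\in S\cap M$ requires the right choice of which index $k$ to use in $u = M_k + z_kK_k$ (large enough that the error terms are negligible, small enough that $z_k$ has already been committed) and a genuinely inductive/recursive use of the fact that $R$ already has this property — this is precisely the point where the lemma is a nontrivial "slight modification" of \cite[Lemma 5.1]{loepp2018uncountable} and where the hypothesis on the $A_i$ must be invoked with care.
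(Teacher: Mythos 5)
Your plan misses the mechanism that actually makes the lemma work, and the tool you propose in its place cannot deliver the conclusion. The only thing that needs to be proved is that every element of $R[u]\cap M$ factors as $c\cdot d$ with $c\in S\cap M$ and $d$ a unit of $T$ (countability and quasi-localness are easy), and this is not achieved in the paper by any avoidance argument. Instead, the paper enumerates the nonzero polynomials $G_j(X)\in R[X]$ and chooses the $z_i$ \emph{prescriptively}: writing $u=M_k+z_kK_k$ gives $G_j(u)=G_j(M_k)+z_k(\text{element of }T)$ with the parenthesized element in $M$ up to the unit part, and the algorithm sets $z_i$ equal to the $S\cap M$ factor $c$ in a factorization $G_j(M_i)=cd$ (available because $R$ is a $CS$-subring) whenever $G_j(M_i)$ is a nonzero nonunit. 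With that choice, $G_j(u)=z_i\bigl(d'+(\text{element of }M)\bigr)=z_i\cdot(\text{unit})$, which is exactly the required form. Your proposal of choosing each $z_n$ to ``avoid a countable collection of proper subsets'' can prevent coincidences but cannot \emph{create} such factorizations: from $G_i(u)=G_i(M_k)+z_k t$ with $G_i(M_k)=cd$, no generic choice of $z_k$ lets you peel off a unit unless $z_k$ is tied to $c$, which is precisely the step you do not supply (and you yourself flag that you do not know how to run the avoidance over the countable set $S\cap M$).

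Two further misreadings compound the gap. First, the hypothesis on $(A_i)$ is not there to keep $K_k$ from ``collapsing'' into a unit multiple of $z_1\cdots z_{k-1}$ (indeed $A_k\in M$, so the factor $A_k+\cdots$ need not be a unit); it is used to prove that the partial sums $M_1,M_2,\ldots$ are pairwise distinct, so that a nonzero $G_i(X)$ over the domain $R$ cannot vanish at all of them, guaranteeing the algorithm eventually produces a $k$ with $G_i(M_k)=z_kd'$, $d'$ a unit. Second, the obstructions you enumerate --- forcing $IT\cap R[u]=IR[u]$ for finitely generated ideals, keeping $u$ transcendental or at least new over $R$, and tracking primes of $T$ over primes of $R$ --- do not belong to this lemma at all: newness of $u$ is Lemma~\ref{lem:strict_containment} (proved by varying the sequence $(A_i)$, not by avoidance in the $z_i$), and the ideal-contraction property is obtained much later, for the ring $B=QF(A)\cap T$ in Theorem~\ref{thm:existence_uncountable_idealprop_s*}, not for $R[u]$. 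As written, your proposal is a plan whose central verification step is acknowledged but not carried out, so it does not constitute a proof of the statement.
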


\begin{proof}
We first use the sequence $(A_i)$ to define the sequence $(z_i)$, so we obtain an element $u$. We then show that adjoining $u$ to $R$ and localizing results in a $CS$-subring of $T$.

Let $R''\coloneqq R[X]$ for an indeterminate $X$. Notice that $R''$ is countable, since $R$ is a $CS$-subring and is thus countable as well. Use the positive integers to enumerate the nonzero elements of $R''$, and consider the $i^{\text{th}}$ element in the well-order, denoted $G_i(X)$. Substituting any $u$ of the form
\[u=1+A_1z_1+\cdots A_2z_1z_2+\cdots+A_kz_1z_2+\cdots z_k+\cdots\]
into the polynomial $G_i(X)$ for any $i\in\Z^+$, we have
\[G_i(u)=r_{\ell,i}u^\ell+\cdots +r_{1,i}u+r_{0,i},\]
where $r_{m,i}\in R$. Since we can express $u$ as $M_k+z_kK_k$ for any $k \geq 1$, we can rewrite $G_i(u)$ as
\[G_i(u)=r_{\ell,i}(M_k+z_kK_k)^\ell +\cdots+ r_{1,i}(M_k+z_kK_k)+r_{0,i}.\]
Expanding this polynomial, we obtain
\begin{align}
    G_i(u)&=r_{\ell,i}\sum_{n=0}^\ell\binom{\ell}{n}M_k^{\ell-n}z_k^n K_k^n+\cdots+r_{2,i}\sum_{n=0}^2\binom{2}{n}M_k^{2-n}z_k^n K_k^n+r_{1,i}(M_k+z_kK_k)+r_{0,i}\nonumber\\
    &=r_{\ell,i}\left(M_k^\ell+\sum_{n=1}^\ell M_k^{\ell-n}z_k^n K_k^n\right)+\cdots+r_{1,i}(M_k+z_kK_k)+r_{0,i}\nonumber\\
    &=G_i(M_k)+r_{\ell,i}\sum_{n=1}^\ell M_k^{\ell-n}z_k^n K_k^n+\cdots+r_{1,i}z_kK_k\nonumber\\
    &=G_i(M_k)+z_k\left(\sum_{m=1}^\ell r_{m,i}\sum_{j=1}^m\binom{m}{j}z_{k}^{j-1}K_k^j M_k^{m-j}\right). \label{eqn:domain_binom}
\end{align}

We will define the sequence $(z_i)_{i\in\Z^+}$ recursively using $G_j(M_i)$ so that $z_i\in S\cap M$ for every $i\in\Z^+$. Since we define each $z_i$ to be in $S\cap M$, the second of the two terms in (\ref{eqn:domain_binom}) is an element of $M$. 
Thus, for any $k\geq 1$, $G_i(u)\in M$ if and only if $G_i(M_k)\in M$. 
In other words, for any $k\geq 1$, $G_i(u)$ is a unit in $T$ if and only if $G_i(M_k)$ is a unit in $T$. 

By hypothesis, each $A_i\in R\cap M$, so since each $M_i$ is defined to be $M_i=1+A_1z_1+\cdots+A_{i-1}z_1\cdots z_{i-1}$ and each term of this sum is in $R$, we have that $M_i\in R$. Thus, since $G_j(X)\in R[X]$, we see that $G_j(M_i)\in R$ for all $i,j\in\Z^+$.

Notice that $S\cap M\neq (0)$, since $\widehat{S}=T$ and $\dim T\geq 1$. Let $x$ denote a non-zero element of $S\cap M$.

Starting with $j=1$ and $i=1$, we use $G_j(M_i)$ to define $z_i$ as follows:
\begin{enumerate}
    \item If $G_j(M_i)$ is a unit, then let $z_i=x$, and use $G_{j+1}(M_{i+1})$ to define $z_{i+1}$.
    \item If $G_j(M_i)=0$, then let $z_i=x$, and use $G_j(M_{i+1})$ to define $z_{i+1}$.
    \item If $G_j(M_i)$ is nonzero and not a unit, then since $G_j(M_i)\in R$, which is a $CS$-subring, there exist $c\in S\cap M$ and $d$ a unit in $T$ such that $G_j(M_i)=cd$. Since $G_j(M_i)\neq 0$, we have that $c\neq 0$. Let $z_i=c$, and use $G_{j+1}(M_{i+1})$ to define $z_{i+1}$.
\end{enumerate}
Notice that, by this definition, $z_i\neq 0$ and $z_i\in S\cap M$ for all $i$.

Using our given $(A_i)$ sequence and this definition for the sequence $(z_i)$, define the element $u$ in the previously specified form, and let $R'\coloneqq R[u]$. We show that $R'_{(R'\cap M)}$ is a $CS$-subring of $T$. Notice that, since $R'$ is countable and $S\subseteq R\subseteq R'$,
 it is enough to show that if $r\in R'\cap M$, then $r$ is of the form $r=cd$, where $c\in S\cap M$ and $d$ is a unit in $T$.  If $r = 0$, then $c=0$, $d=1$ works.

Suppose that $r\in R'\cap M$ is a nonzero element of $R'$, so $r=G_i(u)$ for some $i\in\Z^+$ and $r$ is not a unit. Then $G_i(M_k)$ is not a unit for all $k\in\Z^+$ by the claim above. We show that $G_i(u)=cd$ for $c\in S\cap M$ and $d\in T$ a unit. In order to do so, we first need to show that every $M_i$ is distinct. Suppose not, and that $M_j=M_i$ for some $i>j$. Then,
\[M_i-M_j=A_jz_1\cdots z_j+A_{j+1}z_1\cdots z_{j+1}+\cdots +A_{i-1}z_1\cdots z_{i-1}=0.\]
Since every $z_i$ is nonzero and $R$ is a domain, we can cancel $z_1\cdots z_j$, obtaining
\[A_j+A_{j+1}z_{j+1}+\cdots+A_{i-1}z_{j+1}\cdots z_{i-1}=0.\]
By our assumption on the sequence $(A_i)$, there exists a $k$ such that $A_{j+1}\in M^k$ but $A_j\notin M^k$. Then, subtracting $A_j$ from both sides above, we see that
\[A_{j+1}z_{j+1}+\cdots+A_{i-1}z_{j+1}\cdots z_{i-1}=-A_j\notin M^k.\]
However, $A_{j+1}\in M^k$, so we have that $A_\ell\in M^k$ for every $\ell\geq j+1$; thus,
\[A_{j+1}z_{j+1}+\cdots A_{i-1}z_{j+1}z_{j+2}\cdots +z_{i-1}\in M^k,\]
which is a contradiction. Thus, all of the $M_i$'s are distinct.

Since $0\neq G_i(X)\in R[X]$ and $R$ is an integral domain, it must be that $G_i(X)$ has at most $\text{deg}(G_i)$ roots in $R$. Each $M_i\in R$ and all of the $M_i$'s are distinct, so there are infinitely many distinct $M_i$'s, not all of which can be one of the finitely many roots of $G_i(X)$. Thus, there exists some $k\in\Z^+$ such that $G_i(M_k)\neq 0$. By case (3) in the algorithm above, we have that $G_i(M_k)=z_kd'$, where $d'$ is a unit. Substituting into (\ref{eqn:domain_binom}), we have
\begin{align}
    G_i(u)=z_k\left(d'+\sum_{m=1}^\ell r_{m,i}\sum_{j=1}^m\binom{m}{j}z_{k}^{j-1}K_k^j M_k^{m-j}\right).\label{eqn:domain_binom_substitution}
\end{align}
Notice that $z_k\in S\cap M$ by definition. In addition, recall that $K_k$ is defined to be in $M$ as well, so since $d'$ is a unit in $T$, the element
\[d'+\sum_{m=1}^\ell r_{m,i}\sum_{j=1}^m\binom{m}{j}z_{k}^{j-1}K_k^j M_k^{m-j}\]
is a unit in $T$. Thus, $r=G_i(u)$ can be written as $cd$, where $c\in S\cap M$ and $d\in T$ is a unit. 

It follows that $R'$ localized at $R'\cap M$ is a $CS$-subring of $T$.
\end{proof}

The next lemma shows that there always exists a choice of $(A_i)_{i\in\Z^+}\subseteq R\cap M$ such that the ring $R[u]_{R[u]\cap M}$ from Lemma~\ref{lem:z_algo} is not equal to $R$. The statement and proof of this lemma are very similar to those of \cite[Lemma 5.2]{loepp2018uncountable}

\begin{lemma}\label{lem:strict_containment}
Suppose $(T,M)$ is a complete local ring with $\dim T\geq 1$ and $(S,S\cap M)$ is a countable local domain such that $S\subseteq T$ and $\widehat{S}=T$. Given a $CS$-subring $(R,R \cap M)$ of $T$, there exists a $CS$-subring $(R',R' \cap M)$ of $T$ where $R\subsetneq R'\subseteq T$.
\end{lemma}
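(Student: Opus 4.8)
The plan is to invoke Lemma~\ref{lem:z_algo} with a cleverly chosen sequence $(A_i)_{i\in\Z^+}\subseteq R\cap M$, and then show that the resulting unit $u$ is genuinely new, i.e. $u\notin R$. Since $R$ is a $CS$-subring, it is countable; since $\dim T\geq 1$ and $\widehat{S}=T$, Lemma~\ref{lem:t_mod_p_uncountable} (applied to a minimal prime of $T$, or directly to $T$) tells us $\abs{T}\geq c$, so $T$ is uncountable. Thus there is plenty of ``room'' in $T$ for a new element, but the subtlety is that $u$ has the very rigid form $1+A_1z_1+A_2z_1z_2+\cdots$, and the $z_i$ are not free — they are dictated by the algorithm in Lemma~\ref{lem:z_algo} once $(A_i)$ is fixed. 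So I cannot simply say ``pick $u\in T\setminus R$''; I must build $(A_i)$ so that whatever $u$ the algorithm produces, it escapes $R$.

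First I would address the hypothesis of Lemma~\ref{lem:z_algo}: I need a sequence $(A_i)_{i\in\Z^+}\subseteq R\cap M$ such that whenever $i>j$ there is a $k$ with $A_i\in M^k$ and $A_j\notin M^k$. The natural way is to take a single nonzero nonunit $p\in R\cap M$ — for instance $p=x$, the nonzero element of $S\cap M$ guaranteed since $S\cap M\neq(0)$ — and note that $p\in M^{t}\setminus M^{t+1}$ for some maximal $t$ by Krull's intersection theorem in $T$ (as $p\neq 0$, $\bigcap_n M^n=(0)$ in the complete local ring $T$, so $p\notin M^n$ for $n$ large). Then setting $A_i\coloneqq p^{i}$ (or any sequence of strictly increasing $M$-adic order, such as $A_i = p^{n_i}$ with $n_i$ chosen so that the orders strictly increase) gives $A_i\in M^{t_i}$ with $t_i$ strictly increasing in $i$, which forces the required separation property. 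Feeding this $(A_i)$ into Lemma~\ref{lem:z_algo} produces a sequence $(z_i)\subseteq S\cap M$ with all $z_i\neq 0$ and an element $u=1+A_1z_1+A_2z_1z_2+\cdots$ such that $R'\coloneqq R[u]_{(R[u]\cap M)}$ is a $CS$-subring of $T$. It remains only to check $R\subsetneq R'$, i.e. that $u\notin R$.

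The main obstacle is precisely this last step: showing $u\notin R$. I expect the argument to mirror the ``distinctness of the $M_i$'s'' reasoning already used inside the proof of Lemma~\ref{lem:z_algo}, combined with a cardinality or $M$-adic valuation trick. Here is the approach I would take. Suppose toward a contradiction that $u\in R$. Since $R$ is a $CS$-subring and $u$ is a unit (every term past $1$ lies in $M$), $u-1\in R\cap M$, so $u-1 = c\,d$ for some $c\in S\cap M$ and unit $d\in T$; in particular $u-1\neq 0$ has a well-defined finite $M$-adic order $\nu$ (again by Krull intersection in $T$). On the other hand, from the series, $u-1 = A_1 z_1 + z_1 z_2(A_2 + A_3 z_3 + \cdots)$, so $\nu(u-1) \geq \nu(A_1 z_1) = \nu(A_1)+\nu(z_1)$, and more importantly, since $A_1 z_1 \notin M^{\nu(A_1)+\nu(z_1)+1}$ while the tail $z_1 z_2(A_2+\cdots)$ lies in $M^{\nu(z_1)+\nu(z_2)+\nu(A_2)}\subseteq M^{\nu(A_1)+\nu(z_1)+1}$ (using $\nu(A_2)>\nu(A_1)$ and $\nu(z_2)\geq 1$), we get $\nu(u-1)=\nu(A_1)+\nu(z_1)$ exactly. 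This pins down the order but does not yet contradict $u\in R$. To finish, I would instead exploit freedom in the \emph{choice} of the $A_i$: for each nonzero element $g$ of the countable set $R$, run the construction and observe that $u$ agreeing with $g$ would force an algebraic relation $G(u)=0$ for a nonzero polynomial $G\in R[X]$ (namely $X-g$), but the proof of Lemma~\ref{lem:z_algo} shows that for any nonzero $G_i\in R[X]$, $G_i(u)=z_k d$ with $d$ a unit, hence $G_i(u)\neq 0$ — so $u$ is transcendental over $R$, and in particular $u\notin R$. This is the cleanest route: the transcendence of $u$ over $R$ is essentially already established in Lemma~\ref{lem:z_algo}, so I would simply extract that consequence. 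I would write the proof as: choose $(A_i)$ as above, apply Lemma~\ref{lem:z_algo} to get $u$ and $R'=R[u]_{(R'\cap M)}$, and note that since every nonzero $G(X)\in R[X]$ satisfies $G(u)\neq 0$ (shown in the proof of Lemma~\ref{lem:z_algo}), $u$ is transcendental over $R$; hence $u\notin R$, giving $R\subsetneq R'\subseteq T$ as desired.

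The one point requiring care is making sure the hypothesis of Lemma~\ref{lem:z_algo} is actually satisfiable — that is, that a sequence $(A_i)\subseteq R\cap M$ with strictly increasing $M$-adic orders exists. This needs $R\cap M$ to contain an element of arbitrarily... no: it needs a \emph{single} nonzero nonunit $p$, and then $p, p^2, p^3,\dots$ works provided these have strictly increasing $M$-adic order in $T$, which holds because $\nu(p)\geq 1$ forces $\nu(p^{i})\geq i$, strictly increasing. So $A_i = p^i$ with $p = x \in S\cap M\subseteq R\cap M$ nonzero suffices, and $\dim T\geq 1$ with $\widehat S = T$ guarantees $S\cap M\neq(0)$. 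I would present this verification first, then the application of the lemma, then the transcendence observation — keeping the write-up to a few lines since the heavy lifting is all quoted.
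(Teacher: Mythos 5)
Your setup is the same as the paper's: the paper also takes $A_i=x^{q(i)}$ for a nonzero $x\in S\cap M$ and a strictly increasing $q$, and verifies the separation hypothesis of Lemma~\ref{lem:z_algo} exactly as you do. Where you genuinely diverge is the crucial step that $u\notin R$. The paper argues by counting: there are uncountably many strictly increasing functions $q$; a term-by-term comparison of the series (using that the algorithm determines each $z_i$ from the earlier $A_j,z_j$) shows distinct $q$'s produce distinct $u$'s; and since $R$ is countable, some choice of $u$ lies outside $R$. You instead fix one sequence $(A_i)$ and claim the resulting $u$ is transcendental over $R$, hence not in $R$. This is a legitimately different and in some ways cleaner route—it avoids the delicate ``distinct $q$ give distinct $u$'' comparison and yields a stronger conclusion (so the uncountability of $T$ remark in your first paragraph is not needed at all)—but be careful with how you cite it: transcendence is not the statement of Lemma~\ref{lem:z_algo}, and it is not literally what its proof establishes either. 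That proof only treats the case that $G_i(u)$ is a nonzero nonunit, and your quoted assertion that ``for any nonzero $G_i\in R[X]$, $G_i(u)=z_kd$ with $d$ a unit'' is false when $G_i(u)$ is a unit (e.g.\ $G_i(X)=1$), since $z_kd\in M$. What you actually need is that $G_i(u)=0$ is impossible, and this does follow by rerunning the proof's argument in that hypothetical case: $G_i(u)=0\in M$ forces $G_i(M_k)\in M$ for every $k$; the distinctness of the $M_k$'s and the finiteness of the root set of $G_i$ in the domain $R$ guarantee a stage at which case (3) of the algorithm applies, giving $G_i(u)=z_k\cdot(\text{unit})\neq 0$, a contradiction. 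So your write-up should either include these few lines explicitly or first strengthen Lemma~\ref{lem:z_algo} to record the transcendence of $u$ over $R$; as a bare citation of ``the proof of Lemma~\ref{lem:z_algo}'' it is a small but real gap, though easily repaired.
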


\begin{proof}

Since $\dim T \geq 1$ and $\widehat{S} = T$, we have that $\dim S \geq 1$ and so $S\cap M\neq(0)$.  Let $x \in S\cap M \subseteq R \cap M$ with $x \neq 0$. 
Define $A_i=x^{q(i)}$, where $q:\Z^+\to\Z^+$ is a strictly increasing function. Notice that $(A_i)_{i\in\Z^+}\subseteq R\cap M$.  Suppose that $i$ and $j$ are positive integers such that $j < i$.  Then $q(j) < q(i)$.   Since $x \neq 0$ and $S$ is a domain, $x^s \neq 0$ for all positive integers $s$.  Let $\ell$ be the largest integer such that $x^{q(j)} \in M^\ell$, and note that $\ell$ exists because $\bigcap_{i=1}^\infty M^i=(0)$.  We then have that 
$x^{q(j)} \not\in M^{\ell+1}$, but $x^{q(j) + 1} \in M^{\ell+1}$.  As $q(i) \geq q(j) + 1$, we have that
$x^{q(i)} \in M^{\ell+1}$.
This shows that our sequence of $A_i$'s satisfies the needed conditions for Lemma~\ref{lem:z_algo}: we have that $(A_i)_{i\in\Z^+}\subseteq R\cap M$, and whenever $i>j$, there exists a $k\in \Z^+$ such that $A_i\in M^k$ but $A_j\notin M^k$. Thus, by Lemma~\ref{lem:z_algo}, there exists a sequence $(z_i)_{i\in\Z^+}\subseteq S\cap M$ with $z_i \neq 0$ for every $i$ such that, if $u\in T$ is of the form
\[u=1+A_1z_1+A_2z_1z_2+\cdots+A_kz_1z_2\cdots z_k+\cdots,\]
then $R[u]_{R[u]\cap M}$ is a $CS$-subring of $T$.

We now show that there are uncountably many choices for the element $u$. First, notice that, by a diagonal argument, there are uncountably many choices for the function $q$. We show that distinct choices for the function $q$ yield distinct $u$'s. 

Let $u_1=1+A_1z_1+\cdots$ and $u_2=1+B_1z'_1+\cdots$, where $A_i=x^{q(i)}$ and $B_i=x^{p(i)}$, with $p,q:\Z^+\to\Z^+$ both strictly increasing. Suppose that $u_1=u_2$. We show that $A_i=B_i$ and $z_i=z'_i$ for all $i$.

First, referring back to the algorithm described in the proof of Lemma~\ref{lem:z_algo}, $M_1=1$ for all choices of $u$. Since $z_1$ and $z'_1$ are both defined by the algorithm using $G_1(M_1)=G_1(1)$, they are the same and both nonzero. Then, equating $u_1$ and $u_2$, we have $A_1+A_2z_2+\cdots=B_1+B_2z'_2+\cdots$, so then
\[x^{q(1)}+x^{q(2)}z_2+\cdots=x^{p(1)}+x^{p(2)}z'_2+\cdots.\]
Without loss of generality, suppose that $q(1)\leq p(1)$; then, cancelling by $x^{q(1)}$, we have
\[1+x^{q(2)-q(1)}z_2+\cdots=x^{p(1)-q(1)}+x^{p(2)-q(1)}z'_2+\cdots.\]
Since $z_i\in S\cap M$ for all $i$ and $1\notin M$, the left-hand side is not in $M$. Thus, the the right-hand side is not in $M$ either. However, all but the first term are in $M$, since $z'_i\in S\cap M$. Thus, $x^{p(1)-q(1)}\notin M$, implying that $p(1)-q(1)=0$, so $p(1)=q(1)$; thus, $A_1=B_1$. For the algorithm in the proof of Lemma ~\ref{lem:z_algo} the definitions of $z_i$ (resp., $z'_i$) depend only on $A_j$ and $z_j$ (resp., $B_j$ and $z'_j$) for all $j<i$.  Since we have shown that $z_1=z'_1$ and $A_1=B_1$, we can show inductively that $z_i=z'_i$ and $A_i=B_i$ for all $i\geq 1$. There are uncountably many choices for the function $q$, so there are uncountably many choices for $u$ such that $R[u]_{R[u]\cap M}$ is a $CS$-subring of $T$. Since $R$ is a $CS$-subring of $T$, it is countable, and thus there exists a $u\in T\setminus R$ such that $R[u]_{R[u]\cap M}$ is a $CS$-subring of $T$. Hence, $R'=R[u]_{R[u]\cap M}$ is the desired $CS$-subring of $T$.
\end{proof}

The next theorem, which is a generalization of \cite[Theorem 5.3]{loepp2018uncountable}, guarantees the existence of an uncountable quasi-local $S$-subring of $T$.

\begin{theorem}\label{thm:existence_uncountable_s*}
Suppose $(T,M)$ is a complete local ring with $\dim T\geq 1$ and $(S,S\cap M)$ is a countable local domain such that $S\subseteq T$ and $\widehat{S}=T$.
There exists an uncountable quasi-local $S$-subring of $T$, denoted $(A,A\cap M)$.
\end{theorem}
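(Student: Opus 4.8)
The plan is to build $(A, A\cap M)$ as the union of a transfinite ascending chain of $CS$-subrings of $T$ indexed by the ordinals below the first uncountable ordinal $\omega_1$. The idea is to iterate Lemma~\ref{lem:strict_containment} exactly $\omega_1$ many times, taking unions at limit stages, so that the resulting union is an uncountable $S$-subring of $T$ (the cardinality lower bound following from the strict containments occurring cofinally often in a chain of length $\omega_1$). Since $\dim T \geq 1$, the hypotheses of Lemmas~\ref{lem:z_algo} and \ref{lem:strict_containment} are in force at every stage.

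\begin{proof}
Let $\Omega$ be the first uncountable ordinal. We construct an ascending chain $(R_\alpha)_{\alpha < \Omega}$ of $CS$-subrings of $T$ by transfinite recursion. Set $R_0 \coloneqq S$, which is a $CS$-subring of $T$ by the remark following Definition~\ref{defn:cs_subring}. Given $R_\alpha$, a $CS$-subring of $T$, Lemma~\ref{lem:strict_containment} provides a $CS$-subring $R_{\alpha+1}$ of $T$ with $R_\alpha \subsetneq R_{\alpha+1} \subseteq T$. If $\lambda < \Omega$ is a limit ordinal, set $R_\lambda \coloneqq \bigcup_{\alpha < \lambda} R_\alpha$. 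Since $\lambda$ is a countable limit ordinal, this is a countable union of an ascending chain of $CS$-subrings of $T$, hence a $CS$-subring of $T$ by the remark following Definition~\ref{defn:cs_subring}; moreover $S \subseteq R_\lambda \subseteq T$. Thus the recursion goes through and produces an ascending chain of $CS$-subrings $(R_\alpha)_{\alpha < \Omega}$ with $R_\alpha \subsetneq R_{\alpha+1}$ for every $\alpha < \Omega$.

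Now let $A \coloneqq \bigcup_{\alpha < \Omega} R_\alpha$. This is an uncountable union of an ascending chain of $CS$-subrings of $T$, so by the remark following Definition~\ref{defn:cs_subring}, $A$ is a quasi-local $S$-subring of $T$ with maximal ideal $A \cap M$. It remains to check that $A$ is uncountable. For each $\alpha < \Omega$, pick $x_\alpha \in R_{\alpha+1} \setminus R_\alpha$, which is possible since $R_\alpha \subsetneq R_{\alpha+1}$. If $\alpha < \beta < \Omega$, then $x_\alpha \in R_{\alpha+1} \subseteq R_\beta$ while $x_\beta \notin R_\beta$, so $x_\alpha \neq x_\beta$. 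Hence $\{x_\alpha : \alpha < \Omega\}$ is a subset of $A$ of cardinality $\aleph_1$, and $A$ is uncountable. This completes the construction of the uncountable quasi-local $S$-subring $(A, A\cap M)$ of $T$.
\end{proof}

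The only subtlety worth flagging is the bookkeeping at limit stages: one must ensure that a union taken at a limit ordinal remains a \emph{$CS$-subring}, i.e., stays countable and quasi-local with maximal ideal $R_\lambda \cap M$. This is exactly why the recursion is run only up to $\Omega$ rather than further — every proper initial segment of $\Omega$ is countable, so every limit stage below $\Omega$ is a countable union and the $CS$-subring property (in particular countability) is preserved, per the remark following Definition~\ref{defn:cs_subring}. The final union over all of $\Omega$ is uncountable, which is why $A$ is only asserted to be a quasi-local $S$-subring and not a $CS$-subring.
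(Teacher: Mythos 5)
Your proof is correct and follows essentially the same approach as the paper's: a transfinite recursion of length $\omega_1$ (the paper phrases this as a well-ordered uncountable set in which every element has countably many predecessors), applying Lemma~\ref{lem:strict_containment} at successor stages and taking unions at countable limit stages, then concluding that the full union is an uncountable quasi-local $S$-subring. Your explicit choice of witnesses $x_\alpha$ for uncountability is just a slightly more detailed version of the paper's closing cardinality observation.
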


\begin{proof}
There exists a well-ordered uncountable set $C$ such that every element of $C$ has only countably many predecessors.  Let $0$ denote the minimal element of $C$. For every element $c\in C$, we inductively define a $CS$-subring of $T$ that we denote $S_c$.

 First, let $S_0\coloneqq S$. Suppose $0<c\in C$, and assume that $S_b$ has been defined for every $b<c$ so that $S_b$ is a $CS$-subring of $T$. If $c$ has a predecessor, $b\in C$, then define $(S_c, S_c \cap M)$ to be the $CS$-subring of $T$ obtained from Lemma~\ref{lem:strict_containment} with $R=S_b$ so that $S_b\subsetneq S_c\subseteq T$. If instead $c$ is a limit ordinal, define $S_c=\bigcup_{b<c}S_b$; since $c$ has countably many predecessors $b<c$ and each $S_b$ is a $CS$-subring of $T$, we have that $S_c$ is a $CS$-subring of $T$. Then $S_c$ is a $CS$-subring of $T$ for every $c \in C$.

Let $A\coloneqq\bigcup_{c\in C}S_c$. Since each $S_c$ is a $CS$-subring of $T$, $A$ is a quasi-local $S$-subring of $T$ with unique maximal ideal $A\cap M$. Finally, notice that, for uncountably many $c\in C$, there exists a predecessor $b$ of $c$ such that $S_b\subsetneq S_c$, so then $A$ is uncountable. Thus, $(A,A\cap M)$ is an uncountable quasi-local $S$-subring of $T$.
\end{proof}

Next, we construct an uncountable quasi-local $S$-subring $(B,B\cap M)$ of $T$ so that $bT\cap B=bB$ for all $b\in B$. This construction is inspired by the construction from \cite[Theorem 5.4]{loepp2018uncountable}.

\begin{theorem}\label{thm:existence_uncountable_idealprop_s*}
Suppose $(T,M)$ is a complete local ring with $\dim T\geq 1$ and $(S,S\cap M)$ is a countable local domain such that $S\subseteq T$ and $\widehat{S}=T$.
There exists an uncountable quasi-local $S$-subring of $T$, $(B,B\cap M)$, such that, for every principal ideal $I$ of $B$, $IT\cap B=IB$. 
\end{theorem}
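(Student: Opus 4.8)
The plan is to build $B$ as the union of an uncountable ascending chain of $CS$-subrings, interleaving two kinds of steps: "growth" steps that make the ring uncountable (via Lemma~\ref{lem:strict_containment}), and "closure" steps that adjoin, for each element $b$ and each witness of a failure of $bT\cap B = bB$, an element of $T$ that repairs it. The subtlety is that closing up with respect to one element may create new elements whose principal ideals are not closed, so a single pass will not suffice; this is why we need a transfinite construction and must handle all the new elements generated along the way. Following the usual Heitmann-style bookkeeping, I would index by a well-ordered uncountable set $C$ in which every element has only countably many predecessors, and at each stage track a countable list of "tasks" of the form: given $b \in R$ and $t \in T$ with $bt \in M$ (or more precisely, given $b\in R$ and an element $y \in bT \cap R$, i.e.\ $y = bt$ for some $t\in T$), ensure $t$ itself lies in the ring (so that $y = bt \in bR$).

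The key steps, in order: (1) Fix $C$ as above with minimal element $0$, set $B_0 := S$. (2) For a successor ordinal, alternate: on "even" successors apply Lemma~\ref{lem:strict_containment} to get a strictly larger $CS$-subring (guaranteeing uncountability of the union), and on "odd" successors perform one closure step. (3) For the closure step, given the current $CS$-subring $R$, pick (according to a fixed enumeration of $R \times T$, using that $R$ is countable) a pair $(b,t)$ with $b \in R$, $t \in T$, $bt \in R$; the goal is to adjoin $t$. But we cannot just adjoin $t$ — we need the result to remain a $CS$-subring, i.e.\ every nonzero nonunit must factor as $cd$ with $c \in S\cap M$, $d$ a unit of $T$. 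To arrange this I would instead adjoin $t$ via the machinery already developed: one checks that $t$ can be written (after subtracting a suitable unit) in the series form $1 + A_1 z_1 + \cdots$ handled by Lemma~\ref{lem:z_algo}, or more directly, since $bt = y \in R$ and $b \in R$, if $b$ is a unit then $t = b^{-1}y \in R$ already and there is nothing to do, while if $b$ is a nonunit then $b = c_0 d_0$ with $c_0 \in S\cap M$, and we only need to adjoin $t$ in the case it is not already forced; here I would mimic the argument of \cite[Theorem 5.4]{loepp2018uncountable} to show the localized ring $R[t]_{(R[t]\cap M)}$ is again a $CS$-subring. (4) At limit ordinals take unions, which remain $CS$-subrings (countable unions) or quasi-local $S$-subrings (the final uncountable union). (5) Set $B := \bigcup_{c\in C} B_c$; it is an uncountable quasi-local $S$-subring of $T$ by construction, and by the standard closing-up argument every pair $(b,t)$ with $bt \in B$ was handled at some stage, so $t \in B$, giving $bT \cap B = bB$ for every principal ideal.

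The main obstacle is the closure step: showing that adjoining the needed element $t$ preserves the $CS$-subring property, i.e.\ that the new elements of $R[t]_{(R[t]\cap M)} \cap M$ still factor as $cd$ with $c \in S\cap M$ and $d$ a unit of $T$. The issue is that $t$ is an arbitrary element of $T$, not one of the carefully constructed series elements $u$ from Lemma~\ref{lem:z_algo}, so the clean factorization argument there does not apply verbatim. The resolution, as in \cite{loepp2018uncountable}, is that we do not adjoin an arbitrary $t$: we only ever need to make $bt \in bB$ when $y := bt \in B$, and we can choose the element to adjoin to be $y/b$ computed inside $T$ only when this does not break the structure; more carefully, one shows that for $b \in R\cap M$ with $b = cd$ ($c\in S\cap M$, $d$ a unit) and $y \in bT\cap R$, any element $r$ of the subring generated by $t = y/b$ over $R$ that lies in $M$ can still be written as (element of $S\cap M$)$\times$(unit), using that $y \in cT \cap R$ forces a divisibility that, combined with $R$ being a $CS$-subring, yields the factorization. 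I would structure this as a lemma analogous to Lemma~\ref{lem:z_algo}: "given a $CS$-subring $R$ and $b\in R$, $y \in bT\cap R$, there is a $CS$-subring $R'$ with $R \subseteq R' \subseteq T$ and $y \in bR'$," and then the theorem follows by the transfinite iteration described above, exactly as Theorem~\ref{thm:existence_uncountable_s*} followed from Lemma~\ref{lem:strict_containment}.
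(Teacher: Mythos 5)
Your overall transfinite closing-up strategy could be made to work, but as written the crux of the argument is missing, and the sketch you give for it leans on the wrong fact. The key step is your proposed lemma that adjoining $t=y/b$ to a $CS$-subring $R$ (and localizing at the contraction of $M$) again yields a $CS$-subring; you assert this, and justify it by saying that ``$y\in cT\cap R$ forces a divisibility that, combined with $R$ being a $CS$-subring, yields the factorization.'' But being a $CS$-subring gives no control over contracted ideals: $bT\cap R=bR$ is exactly the property that fails for the intermediate rings and that the whole theorem is designed to arrange in the limit, so any argument that quietly uses it for $R$ is circular. The fact that actually drives the factorization is contraction to $S$, not to $R$: since $\widehat{S}=T$, we have $c'T\cap S=c'S$ for $c'\in S$. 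Concretely, every element of $R[t]$ lies in $QF(R)\cap T$ (clearing denominators, it is $p/b^{n}$ with $p\in R$), and for such a quotient lying in $M$ one factors numerator and denominator as (element of $S\cap M$)$\cdot$(unit of $T$) and uses $c'T\cap S=c'S$ to rewrite the quotient as $s\cdot(\text{unit})$ with $s\in S\cap M$. Two smaller points: the suggestion that $t$ could be massaged into the series form of Lemma~\ref{lem:z_algo} is not correct and should be dropped (that form is very special and is not needed here); and your scheduling --- one pair per successor stage, chosen from an enumeration --- does not by itself guarantee that every pair $(b,y)$ arising anywhere in the uncountable union is eventually treated. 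That is fixable, e.g.\ by closing up \emph{all} countably many pairs of the predecessor ring at each successor stage (a countable union of $CS$-subrings is again a $CS$-subring), but it needs to be said.

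The paper avoids the entire closure iteration. It takes the uncountable quasi-local $S$-subring $A$ of Theorem~\ref{thm:existence_uncountable_s*} and defines $B=QF(A)\cap T$ in a single step. Such a $B$ is closed under division inside $T$, so the principal-ideal property is a three-line check: if $c=(a/b)t\in B$ with $t\in T$ and $c=a'/b'$ with $a',b'\in A$, then $t=a'b/(b'a)\in QF(A)\cap T=B$, whence $c\in(a/b)B$. All the real work is then the factorization computation showing that $B$ is a quasi-local $S$-subring --- which is precisely the computation your closure lemma requires, done once rather than transfinitely often. So either prove your closure lemma by that computation (via $R[t]\subseteq QF(R)\cap T$ and $c'T\cap S=c'S$) and repair the bookkeeping, or, more efficiently, replace the iteration by the quotient-field intersection, which renders the transfinite closure unnecessary.
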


\begin{proof}
Define $B=QF(A)\cap T$, where $(A, A \cap M)$ is the uncountable quasi-local $S$-subring of $T$ obtained from Theorem~\ref{thm:existence_uncountable_s*}, and $QF(A)$ is the quotient field of $A$. Then, $A\subseteq B$, and since $A$ is uncountable, so is $B$. Let $r\in B\cap M$, so $r=\frac{a}{b}$ for $a,b\in A$ with $b\neq 0$. We show that $r=cd$, where $c\in S\cap M$ and $d\in T$ is a unit. Since $\frac{a}{b}\in M$, there exists $m\in M$ such that $\frac{a}{b}=m$, so $a=bm$, implying that $a\in A\cap M$. Recall that $A$ is an $S$-subring, so there exist $c\in S\cap M$ and $d\in T$ a unit such that $a=cd$. Thus, $\frac{a}{b}=\frac{cd}{b}$. First suppose that $b\in A\setminus M$; then $b$ is a unit in $T$, so $r=c(db^{-1})$ is of the desired form, since $db^{-1}\in T$ is a unit. 

Now suppose that $b\in A\cap M$, so $b=c'd'$ for $c'\in S\cap M$ and $d'\in T$ a unit. Notice that $c'\neq 0$ since $b\neq 0$. Then,
\[\frac{a}{b}=\frac{cd}{c'd'}\in B\cap M\subseteq T,\]
so, for some $v\in T$, we have that $\frac{cd}{c'd'}=v$, implying that $c=c'd'd^{-1}v$. This means that $c\in c'T\cap S$. Recall that the completion of $S$ is $T$, and so $c'T\cap S=c' S$; in particular, $c=c's$ for some $s\in S$. Thus,
\[\frac{a}{b}=\frac{cd}{c'd'}=\frac{c'sd}{c'd'}=\frac{sd}{d'}=s(dd'^{-1}).\]
If $s\in S\setminus M$, then $\frac{a}{b}\notin M$, which contradicts the assumption that $\frac{a}{b}\in B\cap M$. It must be that $s\in S\cap M$, and so $r=\frac{a}{b}$ can be written in the desired form. Thus, we have that $B$ is indeed an $S$-subring of $T$.

We show that $B$ is quasi-local with maximal ideal $B\cap M$ by showing that the units in $B$ are exactly the elements that are not in $B\cap M$. If $x\in B$ is a unit in $B$, then $x$ is a unit in $T$, so $x\notin M$ and $x\notin B\cap M$. Now suppose that $x\in B\setminus (B\cap M)$. Then, $x\neq 0$ and $x\notin M$, so $x$ must be a unit in $T$. There therefore exists a $y\in T$ with $xy=1$. Since $y=\frac{1}{x}\in QF(A)\cap T=B$, we have that $x,y\in B$ and $x$ is a unit in $B$.

Finally, we show that if $I$ is a principal ideal of $B$ then $IT\cap B=IB$. Notice that $I=\left(\frac{a}{b}\right)B$ for some $a,b\in A$ with $b$ nonzero. If $a=0$, then, $IT=(0)$ and $IB=(0)$, so $IT\cap B=(0)=IB$. Now suppose that $a\neq 0$. If $c\in IT\cap B$, then $c=\frac{a}{b}t$ for some $t \in T$. In addition, $c=\frac{a'}{b'}$ for some $a',b'\in A$ with $b'\neq 0$, since $c\in B$. Now, $t=\frac{a'b}{b'a}\in B$, so $c\in\frac{a}{b}B=IB$. It follows that $IT\cap B\subseteq IB$. Since $IB\subseteq IT$ and $IB\subseteq B$, we have that $IB\subseteq IT\cap B$. Thus, $IT\cap B=IB$, as desired.
\end{proof}

We now use the properties of the ring $B$ constructed in Theorem~\ref{thm:existence_uncountable_idealprop_s*} to show, in the next three results, that $B$ Noetherian and has completion $T$. These results are generalizations of \cite[Lemma 6.1, Theorem 6.2, Theorem 6.3]{loepp2018uncountable}.

\begin{lemma}\label{lem:fgideals_extended}
Suppose $(T,M)$ is a complete local ring with $\dim T\geq 1$ and $(S,S\cap M)$ is a countable local domain such that $S\subseteq T$ and $\widehat{S}=T$. 
Then there exists an uncountable quasi-local $S$-subring of $T$, $(B,B\cap M)$, such that finitely generated ideals of $B$ are extended from $S$, i.e., for any finitely generated ideal $J$ of $B$, $J=(p_1,\ldots,p_k)B$ for $p_i\in S$.
\end{lemma}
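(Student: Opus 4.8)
The plan is to show that the ring $B$ already produced in Theorem~\ref{thm:existence_uncountable_idealprop_s*} has the stated property, so that no new construction is needed. Recall that in that theorem $B = QF(A)\cap T$, where $A$ is the uncountable quasi-local $S$-subring of $T$ from Theorem~\ref{thm:existence_uncountable_s*}, and that $B$ was shown there to be an uncountable quasi-local $S$-subring of $T$ with maximal ideal $B\cap M$. Thus it remains only to verify that every finitely generated ideal of $B$ is extended from $S$.

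Given a finitely generated ideal $J=(r_1,\dots,r_k)B$, I would first dispose of the trivial cases: if some $r_i$ is a unit of $B$ then $J=B=(1)B$ with $1\in S$; after deleting any generators equal to $0$ (and observing $J=(0)B$ if all of them vanish), I may assume every $r_i$ lies in $(B\cap M)\setminus\{0\}$. The crux is then to replace each such $r_i$ by an element of $S$ that generates the same principal ideal of $B$. Using the definition of an $S$-subring, write $r_i=c_id_i$ with $c_i\in S\cap M$ and $d_i$ a unit of $T$; since $r_i\neq 0$ we also have $c_i\neq 0$. The key observation---and the only place where the particular form $B=QF(A)\cap T$ is used---is that $d_i$ is in fact a unit of $B$: since $r_i\in B\subseteq QF(A)$ and $c_i^{-1}\in QF(A)$, we get $d_i=r_ic_i^{-1}\in QF(A)$, while $d_i\in T$ because it is a unit of $T$, so $d_i\in QF(A)\cap T=B$; applying the same computation to $d_i^{-1}=c_ir_i^{-1}$ shows $d_i^{-1}\in B$ as well. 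Hence $r_iB=c_id_iB=c_iB$, and therefore $J=(c_1,\dots,c_k)B$ with each $c_i\in S$, which is what we want.

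I expect the only genuine obstacle to be the observation that the unit $d_i$ witnessing $r_i=c_id_i$ actually belongs to $B$; for an arbitrary $S$-subring this can fail, which is exactly the reason the construction passes to the pullback $QF(A)\cap T$ in the first place. The remainder of the argument is routine bookkeeping over the finitely many generators, so the proof should be short once this point is in place.
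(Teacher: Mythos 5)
Your proof is correct, and its skeleton is the same as the paper's: take the ring $B$ produced by Theorem~\ref{thm:existence_uncountable_idealprop_s*}, write each nonzero nonunit generator as $c_id_i$ with $c_i\in S\cap M$ and $d_i$ a unit of $T$, show $d_i$ is in fact a unit of $B$, and replace the generators by the $c_i$. Where you diverge is in the justification of the one nontrivial step. The paper stays at the level of the \emph{statement} of Theorem~\ref{thm:existence_uncountable_idealprop_s*}: from $b_i=p_iu_i\in p_iT\cap B=p_iB$ and the fact that $p_i$ (a nonzero element of $S$) is a non-zerodivisor in $T$, it cancels $p_i$ to get $u_i\in B$, and then uses quasi-localness to conclude $u_i$ is a unit of $B$; this argument is modular and would apply to any ring having the contraction property for principal ideals. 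You instead re-open the construction and use the explicit description $B=QF(A)\cap T$, computing $d_i=r_ic_i^{-1}$ and $d_i^{-1}=c_ir_i^{-1}$ inside $QF(A)$ and intersecting with $T$. That works, and it even bypasses the contraction property and the quasi-localness of $B$, but it silently relies on the fact that nonzero elements of $A$ (and of $B$ and $S$) are non-zerodivisors of $T$ — which follows from Lemma~\ref{lem:s_subring_domain}'s observation that $S$-subrings meet the associated primes of $T$ trivially — so that $QF(A)$ sits inside the total quotient ring of $T$ and the divisions by $c_i$ and $r_i$ are legitimate there; it also makes the lemma depend on the particular construction in the proof of Theorem~\ref{thm:existence_uncountable_idealprop_s*} rather than only on its stated conclusion. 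Either route proves the lemma.
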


\begin{proof}
Let $(B,B\cap M)$ be the uncountable quasi-local $S$-subring of $T$ constructed in Theorem~\ref{thm:existence_uncountable_idealprop_s*}, such that $IT\cap B=IB$ for every principal ideal $I$ of $B$.

If $J$ is a finitely generated ideal of $B$, then $J=(b_1,\ldots,b_k)B$ for some $b_i\in B$. If $J = (0)$, then $J$ is extended from the zero ideal of $S$.  If $J = B$ then $J=1B$ and $1\in S$.  So assume that $b_i$ are nonzero nonunits for all $i = 1,2, \ldots, k$.

Then, we have that $b_i\in B\cap M$, so $b_i=p_iu_i$, where $p_i\in S\cap M$, and where $u_i$ is a unit in $T$ for all $i=1,\ldots,k$. We show that each $u_i$ is also a unit in $B$. Notice that $b_i=p_iu_i\in p_i T\cap B$. Since $p_iB$ is a principal ideal of $B$, we have that $p_iT\cap B=p_iB$ by Theorem~\ref{thm:existence_uncountable_idealprop_s*}. Thus, $p_iu_i\in p_i B$, and since $p_i$ is not a zero divisor in $T$, we have that $u_i\in B$. Since $u_i\notin M$, it must be that $u_i$ is a unit in $B$. Then, the $p_i$'s and $b_i$'s generate the same ideals in $B$ since they are associates, so $J=(b_1,\ldots,b_k)B=(p_1,\ldots,p_k)B$. Recall that $p_i\in S$ for all $i$, so $J$ is extended from $S$.
\end{proof}

\begin{theorem}\label{thm:b_idealprop}
Suppose $(T,M)$ is a complete local ring with $\dim T\geq 1$ and $(S,S\cap M)$ is a countable local domain such that $S\subseteq T$ and $\widehat{S}=T$. 
Then there exists an uncountable quasi-local $S$-subring of $T$, $(B,B\cap M)$, such that, for every finitely generated ideal $I$ of $B$, $IT\cap B=IB$, and finitely generated ideals of $B$ are extended from $S$.
\end{theorem}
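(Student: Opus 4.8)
The plan is to take for $B$ the uncountable quasi-local $S$-subring produced in Lemma~\ref{lem:fgideals_extended} (which is the same ring $B$ constructed in Theorem~\ref{thm:existence_uncountable_idealprop_s*}). Two of the three conclusions are then already available: finitely generated ideals of $B$ are extended from $S$ by Lemma~\ref{lem:fgideals_extended}, and $IT\cap B = IB$ for every \emph{principal} ideal $I$ of $B$ by Theorem~\ref{thm:existence_uncountable_idealprop_s*}. So the entire content to be proved is the upgrade of $IT\cap B = IB$ from principal ideals to arbitrary finitely generated ones.

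To do this I would fix a finitely generated ideal $I$ of $B$ and, via Lemma~\ref{lem:fgideals_extended}, write $I = (p_1,\dots,p_k)B$ with each $p_i\in S$. Since $IB\subseteq IT\cap B$ is automatic, it suffices to take $x\in IT\cap B$ and show $x\in I$. If $x=0$ this is trivial; if $x$ is a unit of $B$ then $x$ is a unit of $T$, so $(p_1,\dots,p_k)T = T$, forcing some $p_i\notin M$, and since $(S,S\cap M)$ is local this makes $p_i$ a unit of $S$ (hence of $B$), so $I=B\ni x$. The remaining case is $x\in B\cap M$ with $x\neq 0$, and here I would use that $B$ is an $S$-subring (Theorem~\ref{thm:existence_uncountable_idealprop_s*}) to write $x=cd$ with $c\in S\cap M$ and $d$ a unit of $T$; note $c\neq 0$.

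Now the two halves fit together. First, dividing the relation $x\in(p_1,\dots,p_k)T$ by the unit $d$ gives $c\in(p_1,\dots,p_k)T\cap S$, and faithful flatness of $T=\widehat S$ over $S$ collapses this to $(p_1,\dots,p_k)T\cap S=(p_1,\dots,p_k)S$, so $c=\sum_{i=1}^{k}s_i p_i$ for some $s_i\in S$. Second, $x\in cT\cap B$, and $cT\cap B = cB$ by the already-known principal case (Theorem~\ref{thm:existence_uncountable_idealprop_s*}), so $x=cb$ for some $b\in B$. Combining, $x = cb = \sum_{i=1}^{k}(s_i b)p_i$ with each $s_i b\in B$, whence $x\in(p_1,\dots,p_k)B = I$. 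This closes the case analysis and yields $IT\cap B = IB$ for every finitely generated ideal $I$.

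The only place that genuinely requires thought --- and the reason the argument is short --- is recognizing that one can avoid the naive strategy of inducting on the number of generators by peeling off $p_k$ modulo $p_kT$, which is unpleasant because $S/p_kS$ need not be a domain while the $S$-subring machinery was built for domains. The $S$-subring property is exactly what lets us replace a general nonzero nonunit $x$ of $B$ by an associate $c$ lying in $S$, reducing membership to an ideal identity in $S$ (handled by faithful flatness) together with a single invocation of the principal-ideal case. Past that, the main thing to be careful about is the degenerate cases ($x=0$, $x$ a unit) and checking that every element introduced along the way actually lies in $B$.
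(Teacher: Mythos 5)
Your proposal is correct and follows essentially the same route as the paper's proof: both take the ring $B$ from Lemma~\ref{lem:fgideals_extended} (i.e.\ the ring of Theorem~\ref{thm:existence_uncountable_idealprop_s*}), write $I=(p_1,\ldots,p_k)B$ with $p_i\in S$, factor a nonzero nonunit element of $IT\cap B$ as $cd$ with $c\in S\cap M$ and $d$ a unit of $T$, use $(p_1,\ldots,p_k)T\cap S=(p_1,\ldots,p_k)S$ from $\widehat{S}=T$, and invoke the principal-ideal case $cT\cap B=cB$ to conclude. The only cosmetic difference is that the paper uses the principal case to show the unit factor lies in $B$ and then moves $c$ into $IB$, whereas you use it to write $x=cb$ directly; the degenerate cases are likewise split slightly differently ($I=B$ versus $x$ a unit), but the substance is identical.
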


\begin{proof}
Let $(B,B\cap M)$ be the uncountable quasi-local $S$-subring of $T$ whose existence is guaranteed by Lemma~\ref{lem:fgideals_extended}. We have that every finitely generated ideal of $B$ is extended from $S$. In addition, we always have that $IB\subseteq IT\cap B$, so we now show that $IT\cap B\subseteq IB$. 

Let $I$ be a finitely generated ideal of $B$, which can be written as $I=(p_1,\ldots,p_k)B$ for some $p_i\in S$ by Lemma~\ref{lem:fgideals_extended}. Consider $c\in IT\cap B$. We will show that $c\in IB$. First, if $I=B$, then $IT\cap B=BT\cap B=B=IB$, so now suppose $I\neq B$. Then, $IT\subseteq M$, so $c\in IT\cap B\subseteq M\cap B$. Note that $c=qu$ for some $q\in S\cap M$ and unit $u\in T$ since $B$ is an $S$-subring of $T$. Then, $qu=c\in qT\cap B=qB$. But this implies that $u\in B$ and it is a unit in $B$. Finally, observe that $cu^{-1}\in (p_1,\ldots,p_k)T$, since $c\in (p_1,\ldots,p_k)T\cap B$ and $u^{-1}\in T$, and recall that $cu^{-1}=q\in S$. Also recall that $\widehat{S}=T$, and so for any finitely generated ideal $I$ of $S$, we have that $IT\cap S=IS$. Thus,
\[cu^{-1}=q\in (p_1,\ldots,p_k)T\cap S=(p_1,\ldots,p_k)S\subseteq (p_1,\ldots,p_k)B=IB.\]
Since $u$ is a unit in $B$, we have that $c\in IB$.
\end{proof}

\begin{theorem}\label{thm:B_completionT}
Suppose $(T,M)$ is a complete local ring with $\dim T\geq 1$ and $(S,S\cap M)$ is a countable local domain such that $S\subseteq T$ and $\widehat{S}=T$. Then there exists an uncountable local domain $(B,B\cap M)$ such that $S\subseteq B\subseteq T$, $\widehat{B}=T$, and ideals of $B$ are extended from $S$.
\end{theorem}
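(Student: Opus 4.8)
The plan is to let $(B, B\cap M)$ be the uncountable quasi-local $S$-subring of $T$ whose existence is guaranteed by Theorem~\ref{thm:b_idealprop}, and then simply check that it has all of the remaining required properties. By construction $B$ is uncountable with $S \subseteq B \subseteq T$, and since $B$ is an $S$-subring of $T$, Lemma~\ref{lem:s_subring_domain} immediately gives that $B$ is an integral domain. It remains to show that $B$ is Noetherian, that $\widehat{B} = T$, and that every ideal of $B$ is extended from $S$.

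For the first two of these, I would apply Proposition~\ref{cor:basering_complete_macine}, taking the ``base ring'' there to be $S$ itself: by hypothesis $S$ is a local subring of $T$ with $\widehat{S} = T$, the ring $B$ is a quasi-local subring of $T$ with $S \subseteq B$ and maximal ideal $B \cap M$, and Theorem~\ref{thm:b_idealprop} provides exactly the condition needed, namely that $IT \cap B = IB$ for every finitely generated ideal $I$ of $B$. Proposition~\ref{cor:basering_complete_macine} then yields that $B$ is Noetherian and $\widehat{B} = T$; combined with $B$ being quasi-local, this makes $(B, B\cap M)$ a local ring, and together with the previous paragraph it is an uncountable local domain with completion $T$ lying between $S$ and $T$.

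Finally, now that $B$ is known to be Noetherian, every ideal of $B$ is finitely generated, and Theorem~\ref{thm:b_idealprop} says every finitely generated ideal of $B$ is extended from $S$; hence every ideal of $B$ is extended from $S$, completing the proof. I do not expect a genuine obstacle here: all of the delicate work --- the transfinite construction of an uncountable $S$-subring, arranging the contraction identity $IT \cap B = IB$ for finitely generated $I$, and the extension-from-$S$ property --- has already been carried out in Theorems~\ref{thm:existence_uncountable_s*}, \ref{thm:existence_uncountable_idealprop_s*}, \ref{thm:b_idealprop} and the supporting lemmas, while the descent of the Noetherian property and of the completion from the finitely-generated-ideal condition is precisely the content of Proposition~\ref{cor:basering_complete_macine}. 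The only points requiring care are matching each hypothesis of that proposition against the established properties of $B$ and observing that Noetherianness promotes ``finitely generated ideals are extended from $S$'' to ``all ideals are extended from $S$''.
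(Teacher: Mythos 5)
Your proposal is correct and follows essentially the same route as the paper's proof: take $B$ from Theorem~\ref{thm:b_idealprop}, use Lemma~\ref{lem:s_subring_domain} for the domain property, apply Proposition~\ref{cor:basering_complete_macine} with base ring $S$ to get that $B$ is Noetherian with $\widehat{B}=T$, and then upgrade ``finitely generated ideals extended from $S$'' to ``all ideals extended from $S$'' via Noetherianness. No gaps.
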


\begin{proof}
By Theorem~\ref{thm:b_idealprop}, there exists an uncountable quasi-local $S$-subring of $T$, $(B,B\cap M)$, such that $IT\cap B=IB$ for all finitely generated ideals $I$ of $B$, and all finitely generated ideals of $B$ are extended from $S$.

By Lemma~\ref{lem:s_subring_domain}, we have that $B$ is a domain because it is an $S$-subring of $T$. 
In addition, $S\subseteq B$ and $(S,S\cap M)$ is a local subring of $T$ such that $\widehat{S}=T$. Thus, by Proposition~\ref{cor:basering_complete_macine}, $B$ is Noetherian with completion $T$.  Since $B$ is Noetherian, all of its ideals are finitely generated, and it follows that all ideals of $B$ are extended from $S$.
\end{proof}

\section{Main Results}\label{sec:uncountable}

In this section, we use our results from Section~\ref{sec:construction} to characterize completions of uncountable local domains with countable spectra that satisfy various properties, such as being excellent, a UFD, and noncatenary.

In order to show many of our results, we view the spectra of the rings in question as \textit{partially ordered sets (posets)} under the relation of set containment. For two posets, $(X,\leq_X)$ and $(Y,\leq_Y)$, an \textit{order isomorphism} from $X$ to $Y$ is a bijective function $f:X\to Y$ such that for all $x,x'\in X$, we have that $x\leq_X x'$ if and only if $f(x)\leq_Y f(x')$. We now show that if two local domains, one contained in the other, have the same completion and satisfy the condition that every ideal of the larger one is extended from the smaller one, then the two local domains' spectra are order isomorphic.

\begin{proposition}\label{prop:specb_specs_isomorphic}
Let $(T,M)$ be a complete local ring, and suppose that $(S,S\cap M)$ and $(B,B\cap M)$ are local domains such that $S\subseteq B$, $\widehat{S}=\widehat{B}=T$, and every ideal of $B$ is extended from $S$. Then the mapping $\varphi:\Spec(B)\to\Spec(S)$ given by $P\mapsto P\cap S$ is an order isomorphism.
\end{proposition}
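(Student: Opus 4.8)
The plan is to make the hypothesis ``every ideal of $B$ is extended from $S$'' do essentially all the work, by showing that each prime of $B$ is literally reconstructed from its contraction. First I would dispose of the routine points: $\varphi$ is well defined because contractions of prime ideals are prime, and $\varphi$ is order-preserving because $P\subseteq P'$ in $\Spec(B)$ trivially gives $P\cap S\subseteq P'\cap S$.

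The heart of the argument is the identity $P=(P\cap S)B$ for every $P\in\Spec(B)$. To see it, write $P=IB$ for an ideal $I$ of $S$ (possible by hypothesis); then $I\subseteq IB\cap S=P\cap S$, so $P=IB\subseteq (P\cap S)B\subseteq PB=P$, forcing equality. From this, injectivity of $\varphi$ is immediate: if $P\cap S=P'\cap S$ then $P=(P\cap S)B=(P'\cap S)B=P'$. The same identity shows $\varphi$ reflects order: if $P\cap S\subseteq P'\cap S$ then $P=(P\cap S)B\subseteq (P'\cap S)B=P'$. (An order-preserving, order-reflecting map is automatically injective, so this already gives injectivity for free as well.) Hence, once surjectivity is established, $\varphi$ will be a bijection with $\varphi$ and $\varphi^{-1}$ both order-preserving, i.e.\ an order isomorphism.

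The only step with genuine content is surjectivity. Given $\frakp\in\Spec(S)$, I would lift it through the common completion $T$: pick $Q\in\Min(\frakp T)$, so that $Q\cap S=\frakp$ by Lemma~\ref{lem:SMALL19_minprime} (applied to $S$, whose completion is $T$). Setting $P:=Q\cap B\in\Spec(B)$ and using $S\subseteq B\subseteq T$, we get $P\cap S=(Q\cap B)\cap S=Q\cap S=\frakp$, so $\varphi(P)=\frakp$. Alternatively, one can bypass $T$ and argue directly: faithful flatness of $T$ over $S$ gives $\frakp B\cap S\subseteq \frakp T\cap S=\frakp$, so $S\setminus\frakp$ is a multiplicative subset of $B$ disjoint from $\frakp B$, and any prime $P$ of $B$ containing $\frakp B$ and maximal with respect to missing $S\setminus\frakp$ satisfies $P\cap S=\frakp$.

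The main obstacle, such as it is, is surjectivity, but it is short once the minimal-prime lemma and the faithful flatness of $T$ over $S$ are invoked; everything else is formal. The one point requiring care is that the notation $(S,S\cap M)$, $(B,B\cap M)$ implicitly places $S$ and $B$ inside $T$ with $S\subseteq B\subseteq T$, which is exactly what legitimizes the nested-contraction computations above.
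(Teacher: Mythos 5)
Your proposal is correct and follows essentially the same route as the paper: surjectivity via a minimal prime of $\mathfrak{p}T$ together with the cited lemma, and the extended-ideals hypothesis used to show the map reflects (hence also detects equality of) containments, which is exactly the paper's argument with generators $x_1,\ldots,x_n\in S$ repackaged as the identity $P=(P\cap S)B$. The alternative surjectivity argument via faithful flatness is a valid bonus but not needed.
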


\begin{proof} Note that if $P$ is a prime ideal of $B$, then $P \cap S$ is a prime ideal of $S$.

We show that $\varphi$ is an order isomorphism by showing that $\varphi$ is surjective, and that if $P,Q\in\Spec(B)$, then $P\subseteq Q$ if and only if $\varphi(P)\subseteq\varphi(Q)$. Notice that this latter condition also shows injectivity: if $\varphi(P)=\varphi(Q)$, then we have that $\varphi(P)\subseteq\varphi(Q)$ and $\varphi(Q)\subseteq\varphi(P)$, thus ensuring that $P\subseteq Q$ and $Q\subseteq P$, i.e., $P=Q$.

To show that $\varphi$ is surjective, suppose $Q\in\Spec(S)$, and let $P$ be a prime ideal of $T$ with $P\in\Min(QT)$. Then, $P\cap B\in\Spec(B)$. Furthermore, $\varphi(P\cap B)=(P\cap B)\cap S=P\cap S=Q$, by Lemma~\ref{lem:SMALL19_minprime}. Thus, $\varphi$ is surjective.

Suppose $P,Q\in\Spec(B)$ with $P\subseteq Q$. Then $P\cap S\subseteq Q\cap S$, and so $\varphi(P)\subseteq\varphi(Q)$. Now suppose $\varphi(P)\subseteq\varphi(Q)$, with $P=(x_1,\ldots,x_n)B$ for $x_i\in S$. 
Then, for all $i = 1,2, \ldots n$, we have that $x_i \in P \cap S = \varphi(P) \subseteq \varphi(Q) = Q \cap S$.  It follows that $x_i \in Q$ for all $i$, and we have that $P \subseteq Q$.
Thus, we have that $\varphi$ is an order isomorphism and that $\Spec(B)$ and $\Spec(S)$ are isomorphic as posets.
\end{proof}

If the spectra of two rings are isomorphic as posets, then, for any prime ideal of height $k$ in one ring, there exists a corresponding prime ideal of height $k$ in the other ring. Similarly, if there is a saturated chain of prime ideals of length $n$ between two prime ideals $P$ and $Q$ of one ring, then there exist corresponding prime ideals $P'$ and $Q'$ in the other ring, as well as a saturated chain of prime ideals of length $n$ between them.

\begin{remark}\label{rem:BS_same_generators}
Using the isomorphism from Proposition~\ref{prop:specb_specs_isomorphic}, we note that the prime ideals of $B$ and $S$ that correspond to each other are generated by the same elements of $S$. If $P\in\Spec(B)$, then $P=(a_1,\ldots,a_n)B$ with $a_i\in S$ for all $i$. Then, since $\widehat{B}=\widehat{S}=T$ and since $S\subseteq B$, we have the following equalities:
\[\varphi(P)=(a_1,\ldots,a_n)B\cap S=((a_1,\ldots,a_n)T\cap B)\cap S=(a_1,\ldots,a_n)T\cap S=(a_1,\ldots,a_n)S.\]
\end{remark}

We now focus on characterizing completions of uncountable local domains with countable spectra. In the dimension zero case, this characterization comes as a direct consequence of the fact that the completions of fields are themselves, and the fact that all local domains of dimension zero have exactly one prime ideal.

\begin{proposition}\label{prop:dim0_precompletion_domain}
Suppose $(T,M)$ is a complete local ring of dimension zero. Then $T$ is the completion of an uncountable local domain with a countable spectrum if and only if $T$ is an uncountable field.
\end{proposition}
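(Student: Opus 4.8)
The plan is to prove both implications directly, the whole thing resting on the standard fact that a zero-dimensional Noetherian local domain is a field, together with the observation that completion preserves Krull dimension.

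For the backward direction, suppose $T$ is an uncountable field. Then $T$ is a local domain whose unique maximal ideal is $(0)$, so the $M$-adic topology on $T$ is discrete and $\widehat{T}=T$. Moreover $\Spec(T)=\{(0)\}$ is finite, hence countable, and $T$ is uncountable by hypothesis. Thus $T$ is the completion of itself, an uncountable local domain with a countable spectrum, which is exactly what is required.

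For the forward direction, suppose $T=\widehat{A}$, where $(A,A\cap M)$ is an uncountable local domain with a countable spectrum. Since completion preserves Krull dimension for Noetherian local rings, $\dim A=\dim\widehat{A}=\dim T=0$. A zero-dimensional Noetherian local domain is a field (in a zero-dimensional domain the ideal $(0)$ is maximal, so $A$ is a field; equivalently, $A$ is Artinian and an Artinian domain is a field). As $A$ is uncountable, it is an uncountable field, and since its maximal ideal is $(0)$ we again have $\widehat{A}=A$. Therefore $T=\widehat{A}=A$ is an uncountable field.

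I do not expect a genuine obstacle here: the only steps requiring any care are the textbook facts that a zero-dimensional Noetherian local domain is a field and that a field is complete with respect to its trivial maximal-ideal topology, and that dimension is preserved under completion. The rest is bookkeeping about cardinality and the one-point spectrum.
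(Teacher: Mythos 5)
Your proof is correct and follows essentially the same route as the paper: both directions rest on the facts that dimension is preserved under completion, a zero-dimensional local domain is a field, and a field is its own completion with a one-point spectrum. You merely spell out these standard facts in slightly more detail than the paper does.
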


\begin{proof}
First suppose $T$ is the completion of an uncountable local domain with a countable spectrum. This uncountable local domain with a countable spectrum must be dimension zero, so it must be a field. However, the completion of a field is itself, so it must be $T$. Thus, $T$ is uncountable and a field.

Now suppose $T$ is an uncountable field. Then, $T$ has one prime ideal and $\widehat{T}=T$. Thus, $T$ is the completion of an uncountable local domain with a countable spectrum.
\end{proof}

We use our construction from Section~\ref{sec:construction} to tackle the sufficient conditions in the case that the dimension of the rings in question are at least one. First, we show that the uncountable local domain that we construct in the previous section has a countable spectrum.

\begin{proposition}\label{prop:B_countable_spec}
Let $(T,M)$ be a complete local ring, and suppose that $(S,S\cap M)$ and $(B,B\cap M)$ are local domains such that $S$ is countable, $S\subseteq B$, $\widehat{S}=\widehat{B}=T$, and every ideal of $B$ is extended from $S$. Then $\Spec(B)$ is countable.
\end{proposition}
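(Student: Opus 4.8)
The plan is to use the order isomorphism $\varphi:\Spec(B)\to\Spec(S)$ from Proposition~\ref{prop:specb_specs_isomorphic}, which applies exactly because $S\subseteq B$, $\widehat S=\widehat B=T$, and every ideal of $B$ is extended from $S$. Since $\varphi$ is in particular a bijection, $\abs{\Spec(B)}=\abs{\Spec(S)}$, so it suffices to show that $\Spec(S)$ is countable.

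To see that $\Spec(S)$ is countable, I would argue that $S$ is a countable Noetherian ring, hence has only countably many ideals. More carefully: every ideal of a Noetherian ring is finitely generated, so every ideal of $S$ is of the form $(s_1,\dots,s_k)S$ for some finite tuple $(s_1,\dots,s_k)\in S^k$. The set of finite tuples of elements of $S$ is $\bigcup_{k\geq 0}S^k$, a countable union of countable sets (since $S$ is countable), hence countable. The map sending a finite tuple to the ideal it generates is a surjection from this countable set onto the set of ideals of $S$, so $S$ has at most countably many ideals. In particular $\Spec(S)$, being a subset of the set of all ideals of $S$, is countable.

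Combining the two steps: $\Spec(B)$ is in bijection with $\Spec(S)$ via $\varphi$, and $\Spec(S)$ is countable, so $\Spec(B)$ is countable.

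I do not anticipate a genuine obstacle here; the only thing to be slightly careful about is invoking Proposition~\ref{prop:specb_specs_isomorphic} with its full hypothesis list satisfied, and noting that the countability of $S$ (together with it being Noetherian, which follows from $S$ being local in the sense of this paper) is what forces $\Spec(S)$, and therefore $\Spec(B)$, to be countable. If one prefers to avoid counting all ideals, one could alternatively observe that $\Spec(S)\subseteq\{$ideals of $S\}$ and that the ideal generated by a finite subset of the countable set $S$ ranges over a countable collection; either phrasing works.
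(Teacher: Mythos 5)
Your proof is correct and follows the same route as the paper: invoke Proposition~\ref{prop:specb_specs_isomorphic} to get a bijection between $\Spec(B)$ and $\Spec(S)$, then use countability of $\Spec(S)$. The paper simply asserts that $\Spec(S)$ is countable, whereas you spell out the (standard) counting of finitely generated ideals of the countable Noetherian ring $S$; this is a harmless elaboration, not a different argument.
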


\begin{proof}
By Proposition~\ref{prop:specb_specs_isomorphic}, we have that $\Spec(B)$ and $\Spec(S)$ are isomorphic as posets. In particular, they have the same cardinalities, so since $\Spec(S)$ is countable, $\Spec(B)$ is countable as well.
\end{proof}

The following result from \cite{SMALL19} characterizes completions of countable local domains.

\begin{theorem}[\cite{SMALL19}, Corollary 2.15]\label{prop:small19_countable_domain}
Let $(T,M)$ be a complete local ring. Then $T$ is the completion of a countable local domain if and only if
\begin{enumerate}
    \item no integer is a zero divisor of $T$,
    \item unless equal to $(0)$, $M\notin\Ass (T)$, and
    \item $T/M$ is countable
\end{enumerate}
\end{theorem}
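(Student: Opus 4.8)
The plan is to prove the two implications separately, using as the main inputs Lech's characterization of completions of local domains (Theorem~\ref{thm:lech_char_domain}) and its cardinality-refined strengthening (Theorem~\ref{thm:small19_cardinality_domain_char}).

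For the forward direction, suppose $T=\widehat{A}$ for some countable local domain $(A,A\cap M)$. Since $A$ is a local domain with completion $T$, Lech's theorem immediately yields conditions (1) and (2): no integer of $T$ is a zero divisor, and $M\notin\Ass(T)$ unless $M=(0)$. For condition (3), I would invoke the standard fact that passing to the completion does not alter the residue field, i.e. the natural map $A/(A\cap M)\to T/M$ is an isomorphism. As $A$ is countable, so is its quotient $A/(A\cap M)$, hence $T/M$ is countable.

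For the reverse direction, assume (1), (2), and (3) hold, and apply Theorem~\ref{thm:small19_cardinality_domain_char}, whose hypotheses are exactly (1) and (2). If $T/M$ is finite, that theorem directly produces a countable local domain $A$ with $\widehat{A}=T$, and we are done. If $T/M$ is infinite, then by (3) it is countably infinite, and the same theorem produces a local domain $A$ with $\widehat{A}=T$ and $\abs{A}=\abs{T/M}=\aleph_0$; in particular $A$ is countable. In either case $T$ is the completion of a countable local domain, as required.

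The genuine content of the statement is thus almost entirely absorbed into the two cited results, so the real work has already been carried out elsewhere: the substantive step is the transfinite subring construction underlying Theorem~\ref{thm:small19_cardinality_domain_char}, in which one builds $A\subseteq T$ and forces it to be Noetherian with completion $T$ via criteria of the type \cite[Proposition 1]{heitmann}, while simultaneously controlling $\abs{A}$ by adjoining only as many elements as the residue field permits. If a self-contained argument were wanted, one would have to reproduce that construction; given the machinery available in this section, however, the only nontrivial checks are the residue-field preservation used in the forward direction and the finite-versus-infinite case split of Theorem~\ref{thm:small19_cardinality_domain_char} used in the backward direction, so I do not anticipate a serious obstacle.
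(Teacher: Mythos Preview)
Your argument is correct. Note, however, that this statement is quoted as a background result from \cite{SMALL19} and is not proved in the present paper, so there is no ``paper's own proof'' to compare against here; that said, your derivation is exactly how Corollary~2.15 of \cite{SMALL19} follows from Theorem~2.13 there (stated in this paper as Theorem~\ref{thm:small19_cardinality_domain_char}), so your approach matches the intended one.
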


We use Theorem \ref{prop:small19_countable_domain} to identify sufficient conditions for a complete local ring of dimension at least one to be the completion of an uncountable local domain with a countable spectrum.

\begin{proposition}\label{prop:uncountable_domain_countable_spec_sufficient}
Suppose $(T,M)$ is a complete local ring with $\dim T\geq 1$ and suppose the following conditions are satisfied:
\begin{enumerate}
    \item no integer of $T$ is a zero divisor,
    \item $M\notin\Ass(T)$, and 
    \item $T/M$ is countable.
\end{enumerate}
Then $T$ is the completion of an uncountable local domain with a countable spectrum.
\end{proposition}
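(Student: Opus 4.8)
The plan is to assemble this result from the machinery of Section~\ref{sec:construction} together with the known characterization of completions of countable local domains, so that almost no new work is needed. First I would produce a countable ``seed'' domain: since $\dim T\geq 1$ we have $M\neq(0)$, so conditions (1)--(3) of the hypothesis are exactly what Theorem~\ref{prop:small19_countable_domain} requires (our ``$M\notin\Ass(T)$'' implies the clause ``unless equal to $(0)$, $M\notin\Ass(T)$''). Hence $T$ is the completion of a countable local domain, i.e.\ there is a countable local domain $(S,S\cap M)$ with $S\subseteq T$ and $\widehat{S}=T$.

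Next I would feed this $S$ into the construction. Because $\dim T\geq 1$ and $\widehat{S}=T$, the hypotheses of Theorem~\ref{thm:B_completionT} are satisfied, so there exists an uncountable local domain $(B,B\cap M)$ with $S\subseteq B\subseteq T$, $\widehat{B}=T$, and every ideal of $B$ extended from $S$. This $B$ is already uncountable, local, a domain, and has completion $T$; the only remaining point is the countability of its spectrum.

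For that I would apply Proposition~\ref{prop:B_countable_spec} to the triple $(T,S,B)$: $S$ is countable, $S\subseteq B$, $\widehat{S}=\widehat{B}=T$, and every ideal of $B$ is extended from $S$, so $\Spec(B)$ is countable. (Implicitly this uses that a countable Noetherian ring has only countably many ideals, hence countable spectrum, transported to $B$ via the order isomorphism of Proposition~\ref{prop:specb_specs_isomorphic}.) Combining these steps, $B$ is an uncountable local domain with a countable spectrum and $\widehat{B}=T$, which is what we want. Since the genuine content lives in Lemmas~\ref{lem:z_algo}--\ref{lem:strict_containment} and Theorems~\ref{thm:existence_uncountable_s*}--\ref{thm:B_completionT}, there is no real obstacle in this proof; the only care required is checking that conditions (1)--(3) are precisely what is needed to obtain the countable seed domain $S$, and that $\dim T\geq 1$ is exactly the hypothesis under which the $CS$-subring construction runs.
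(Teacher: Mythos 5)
Your proposal is correct and follows essentially the same route as the paper: obtain a countable local domain $S$ with $\widehat{S}=T$ from Theorem~\ref{prop:small19_countable_domain}, apply Theorem~\ref{thm:B_completionT} to produce the uncountable domain $B$ with ideals extended from $S$, and conclude countability of $\Spec(B)$ via Proposition~\ref{prop:B_countable_spec}. No gaps.
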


\begin{proof}
If $T$ satisfies conditions (1), (2), and (3), then, by Theorem~\ref{prop:small19_countable_domain}, $T$ is the completion of a countable local domain. Let this countable local domain be $(S,S\cap M)$ and apply Theorem~\ref{thm:B_completionT}. Then, we have that there exists an uncountable local domain $(B,B\cap M)$ such that $S\subseteq B\subseteq T$, $\widehat{B}=T$, and ideals of $B$ are extended from $S$. Finally, by Proposition~\ref{prop:B_countable_spec}, we have that $B$ has a countable spectrum.
\end{proof}

We are now able to prove one of our main results.

\begin{theorem}\label{thm:dim2+_precompletion_domain}
Suppose $(T,M)$ is a complete local ring.
\begin{itemize}
    \item If $\dim T=0$, then $T$ is the completion of an uncountable local domain with a countable spectrum if and only if $T$ is an uncountable field.
    \item If $\dim T=1$, then $T$ is the completion of an uncountable local domain with a countable spectrum if and only if 
\begin{enumerate}
    \item no integer of $T$ is a zero divisor, and
    \item $M\notin\Ass(T)$.
\end{enumerate}
    \item If $\dim T\geq2$, then $T$ is the completion of an uncountable local domain with a countable spectrum if and only if
\begin{enumerate}
    \item no integer of $T$ is a zero divisor,
    \item $M\notin\Ass(T)$, and
    \item $T/M$ is countable. 
\end{enumerate}
\end{itemize}
\end{theorem}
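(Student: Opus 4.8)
The plan is to assemble the three cases entirely from results already established, so that the only real content is bookkeeping about which prior theorem applies in which dimension regime. The case $\dim T = 0$ is literally the statement of Proposition~\ref{prop:dim0_precompletion_domain}, so I would simply cite it and move on.

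For $\dim T \geq 2$, both directions are quick. For necessity, note that an uncountable local domain with a countable spectrum is in particular a local domain with a countable spectrum, so Theorem~\ref{thm:small19_domain_char_countable_spec} (\cite[Corollary 3.7]{SMALL19}) immediately yields conditions (1), (2), and (3). For sufficiency, conditions (1)--(3) are exactly the hypotheses of Proposition~\ref{prop:uncountable_domain_countable_spec_sufficient}, which produces the desired uncountable local domain with a countable spectrum. So this case requires no new argument.

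For $\dim T = 1$, necessity follows from Theorem~\ref{thm:lech_char_domain} (Lech): if $T$ is the completion of any local domain, then no integer of $T$ is a zero divisor and, unless $T=(0)$, $M \notin \Ass(T)$; since $\dim T = 1$ forces $T \neq (0)$, both (1) and (2) hold, and in fact the countable-spectrum hypothesis is not even used. For sufficiency, I would first record the observation that any local domain $A$ with $\dim A = 1$ has $\Spec(A) = \{(0),\, A\cap M\}$, a two-element --- hence countable --- spectrum; therefore it suffices to show that conditions (1) and (2) force $T$ to be the completion of \emph{some} uncountable local domain. Split on $\lvert T/M\rvert$: if $T/M$ is uncountable, apply Theorem~\ref{thm:small19_cardinality_domain_char} (\cite[Theorem 2.13]{SMALL19}) to obtain a local domain $A$ with $\widehat{A}=T$ and $\lvert A\rvert = \lvert T/M\rvert$, which is uncountable; if $T/M$ is countable, then (1), (2), (3) all hold and Proposition~\ref{prop:uncountable_domain_countable_spec_sufficient} applies directly. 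In either case $T$ is the completion of an uncountable local domain with a countable spectrum.

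The only point demanding care --- the ``obstacle,'' such as it is --- is the dimension-one sufficiency direction, where one cannot invoke Proposition~\ref{prop:uncountable_domain_countable_spec_sufficient} uniformly because its hypothesis (3) fails when $T/M$ is uncountable; the remedy is precisely the elementary observation that one-dimensional local domains automatically have (two-element) countable spectra, combined with the cardinality assertion of \cite[Theorem 2.13]{SMALL19}. Everything else in the proof is a direct appeal to a previously cited result.
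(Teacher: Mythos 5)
Your proposal is correct and follows essentially the same route as the paper: dimension zero via Proposition~\ref{prop:dim0_precompletion_domain}, sufficiency via Proposition~\ref{prop:uncountable_domain_countable_spec_sufficient} when $T/M$ is countable, the dimension-one uncountable-residue-field case via Theorem~\ref{thm:small19_cardinality_domain_char} together with the observation that one-dimensional local domains have countable spectra, and necessity via Theorem~\ref{thm:small19_domain_char_countable_spec} (the paper also invokes Theorem~\ref{thm:lech_char_domain} for conditions (1) and (2), but your direct appeal to the characterization in dimension at least two is equally valid). No gaps.
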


\begin{proof}
First, if $\dim T=0$, then the desired statement is given by Proposition~\ref{prop:dim0_precompletion_domain}.

Suppose that $(T,M)$ is a complete local ring with $\dim T\geq 1$, no integer of $T$ is a zero divisor, $M\notin\Ass(T)$, and $T/M$ is countable. By Proposition~\ref{prop:uncountable_domain_countable_spec_sufficient} we have that $T$ is the completion of an uncountable local domain with a countable spectrum. Now suppose that $(T,M)$ is a complete local ring with $\dim T=1$, no integer of $T$ is a zero divisor, $M\notin\Ass(T)$, and $T/M$ is uncountable. By Theorem~\ref{thm:small19_cardinality_domain_char}, $T$ is the completion of an uncountable local domain $A$. Since $\dim A=1$, $A$ has a countable spectrum.

Now suppose that $(T,M)$ is a complete local ring with $\dim T\geq 1$ and it is the completion of an uncountable local domain with a countable spectrum. By Theorem~\ref{thm:lech_char_domain}, it must be that no integer of $T$ is a zero divisor and $M\notin\Ass(T)$. If $\dim T\geq 2$, then it must also be that $T/M$ is countable by Theorem~\ref{thm:small19_domain_char_countable_spec}.
\end{proof}

We present a corollary of our result in the case that $T$ itself is a domain.

\begin{corollary}\label{cor:T_domain}
Suppose $(T,M)$ is a complete local domain. 
\begin{itemize}
    \item If $\dim T=0$, then $T$ is the completion of an uncountable local domain with a countable spectrum if and only if $T$ is uncountable. 
    \item If $\dim T=1$, then $T$ is the completion of an uncountable local domain with a countable spectrum. 
    \item If $\dim T\geq 2$, then $T$ is the completion of an uncountable local domain with a countable spectrum if and only if $T/M$ is countable.
\end{itemize}
\end{corollary}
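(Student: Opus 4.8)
The plan is to deduce Corollary~\ref{cor:T_domain} directly from Theorem~\ref{thm:dim2+_precompletion_domain} by checking that, when $T$ is itself a complete local \emph{domain}, conditions (1) and (2) of that theorem are automatically satisfied, and by supplying the one extra ingredient needed in the dimension-one case. First I would handle $\dim T = 0$: a zero-dimensional complete local domain is a field, so ``$T$ is uncountable'' is exactly ``$T$ is an uncountable field,'' and the claim is immediate from the first bullet of Theorem~\ref{thm:dim2+_precompletion_domain}.

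Next, for $\dim T \geq 1$, observe that since $T$ is a domain, no nonzero element of $T$ is a zero divisor, so in particular no integer of $T$ is a zero divisor; and $\Ass(T) = \{(0)\}$ since $T$ is a domain, so $M \notin \Ass(T)$ because $\dim T \geq 1$ forces $M \neq (0)$. Thus conditions (1) and (2) hold for free. For $\dim T \geq 2$, the third bullet of Theorem~\ref{thm:dim2+_precompletion_domain} then says $T$ is the completion of an uncountable local domain with countable spectrum if and only if $T/M$ is countable, which is precisely the third bullet of the corollary. (The forward direction also uses that if $T$ is such a completion then $T/M$ must be countable, but this is already contained in Theorem~\ref{thm:dim2+_precompletion_domain}; no domain hypothesis is needed there.)

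The one place requiring genuine (if small) additional argument is $\dim T = 1$, where the corollary asserts the conclusion \emph{unconditionally} — there is no ``$T/M$ countable'' hypothesis. When $T/M$ is countable this is just the $\dim T = 1$ bullet of Theorem~\ref{thm:dim2+_precompletion_domain} (conditions (1),(2) hold automatically as above). When $T/M$ is uncountable, I would invoke Theorem~\ref{thm:small19_cardinality_domain_char}: since conditions (1),(2) hold, $T$ is the completion of a local domain $A$ with $|A| = |T/M|$, hence $A$ is uncountable; and since $\widehat{A} = T$ with $\dim T = 1$ we get $\dim A = 1$, so $\Spec(A)$ is finite — in particular countable — because a one-dimensional local domain has only the zero ideal, the maximal ideal, and the height-one primes, and a Noetherian ring has only finitely many minimal primes over any ideal; more simply, $\Spec(A)$ is countable since a one-dimensional Noetherian domain has countably many primes (indeed the height-one primes correspond to the finitely many primes minimal over any nonzero element, ranging over a countable... ) — here I would just cite the standard fact, already used implicitly in the proof of Theorem~\ref{thm:dim2+_precompletion_domain}, that a one-dimensional local domain has a countable (in fact at most countable) spectrum. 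Combining the two cases gives the unconditional statement.

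I do not anticipate a serious obstacle: the corollary is essentially a specialization of the theorem, and the only subtlety is remembering to split the $\dim T = 1$ case according to whether $T/M$ is countable and to quote Theorem~\ref{thm:small19_cardinality_domain_char} in the uncountable-residue-field subcase — exactly as is done inside the proof of Theorem~\ref{thm:dim2+_precompletion_domain} itself. The argument that a one-dimensional Noetherian local domain has countable spectrum is the only ``new'' micro-step, and it is routine.
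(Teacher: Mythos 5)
Your proposal is correct and follows essentially the same route as the paper: the corollary is read off from Theorem~\ref{thm:dim2+_precompletion_domain} after noting that a zero-dimensional complete local domain is a field and that, for a domain of positive dimension, no integer is a zero divisor and $M\notin\Ass(T)$ hold automatically. The only difference is that your case split in dimension one (on whether $T/M$ is countable, invoking Theorem~\ref{thm:small19_cardinality_domain_char}) is unnecessary duplication: the dimension-one bullet of Theorem~\ref{thm:dim2+_precompletion_domain} requires only conditions (1) and (2), with no countability hypothesis on $T/M$, so it already yields the unconditional conclusion -- that split is exactly what was carried out inside the proof of the theorem itself.
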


\begin{proof}
Suppose $\dim T = 0$.  Since $T$ is a domain, it is a field, and so the result follows from Theorem \ref{thm:dim2+_precompletion_domain}.
Now suppose $\dim T \geq 1$.  Since $T$ is a domain, we have that no integer of $T$ is a zero divisor and $M \notin\Ass(T)$.  If $\dim T = 1$, then $T$ is the completion of an uncountable local domain with a countable spectrum by Theorem \ref{thm:dim2+_precompletion_domain}.
Suppose that $\dim T \geq 2$.  Then, by Theorem \ref{thm:dim2+_precompletion_domain}, $T$ is the completion of an uncountable local domain with a countable spectrum if and only if $T/M$ is countable.    
\end{proof}

\begin{example}\label{eg:uncountable_domain_completion}
By Theorem~\ref{thm:dim2+_precompletion_domain}, we have that any complete local ring of dimension at least $2$ that is of the form $\R[[x_1,\ldots,x_n]]/I$ or $\C[[x_1,\ldots,x_n]]/I$ is not the completion of an uncountable local domain with a countable spectrum.

Using Corollary~\ref{cor:T_domain}, we see that the complete local ring $T=\Q[[x_1,\ldots,x_n]]$ for any $n\geq 2$ is the completion of an uncountable local domain with a countable spectrum, as $T/M=\Q$ is countable. Note that $T$ is one of the ``nicest'' examples of a complete local ring with a countable residue field, so if any ring were to be the completion of an uncountable local domain with a countable spectrum, it is not that surprising that $T$ would be one. However, we know by Theorem~\ref{thm:dim2+_precompletion_domain} that the complete local ring $T'=\Q[[x,y,z]]/(x^2)$ is also the completion of an uncountable local domain with a countable spectrum, even though $T'$ is not even a domain itself. 
\end{example}

\subsection{Uncountable Excellent Domains with Countable Spectra}\label{subsec:excellent_uncountable}

An important class of rings that are of particular interest to algebraic geometers and number theorists are \textit{excellent} rings. For any $P\in\Spec(A)$, define $k(P)\coloneqq A_P/PA_P$.

\begin{definition}[\cite{rotthaus}, Definition 1.4]\label{defn:excellent}
A local ring $A$ is \textit{excellent} if
\begin{itemize}
    \item[(a)] for all $P\in\Spec(A)$, $\widehat{A}\otimes_A L$ is regular for every finite field extension $L$ of $k(P)$, and
    \item[(b)] $A$ is universally catenary.
\end{itemize}
\end{definition}

In \cite{loepp03}, Loepp characterizes completions of excellent domains in the characteristic zero case. 

\begin{theorem}[\cite{loepp03}, Theorem 9]\label{thm:loepp_excellent_domain_char}
Let $(T,M)$ be a complete local ring containing the integers. Then $T$ is the completion of a local excellent domain if and only if it is reduced, equidimensional, and no integer of $T$ is a zero divisor.
\end{theorem}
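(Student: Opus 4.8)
The statement is an iff, and — as is typical for this kind of characterization — the forward direction is soft while essentially all the work lies in the construction. My plan is to dispatch necessity quickly and then build, from a $T$ that is reduced, equidimensional, and has no integer as a zero divisor, an excellent local domain with completion $T$. For \textbf{necessity}, suppose $T=\widehat A$ with $A$ an excellent local domain. The canonical map $\Z\to A$ is injective because its composite with the inclusion $A\hookrightarrow T$ (faithful flatness) is the injection $\Z\hookrightarrow T$; hence $A$ has characteristic $0$, a nonzero integer is a nonzerodivisor on the domain $A$, and therefore on the flat extension $T$, so no integer of $T$ is a zero divisor (this also follows from the easy direction of Theorem~\ref{thm:lech_char_domain}). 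An excellent — indeed Nagata — local domain is analytically unramified, so $T=\widehat A$ is reduced. Finally, excellence includes universal catenarity, and a universally catenary local domain is formally equidimensional by Ratliff's theorem; that is, $\widehat A=T$ is equidimensional.

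For \textbf{sufficiency}, assume $T$ is reduced, equidimensional, has no integer as a zero divisor, and $\Z\subseteq T$ (if $\dim T=0$ then $T$ is a field and $A=T$ works, so assume $\dim T\geq1$). I would produce $A$ as the union of a transfinite ascending chain of subrings of $T$, starting from the localization of $\Z$ at $M\cap\Z$ — a DVR or $\Q$, in either case an excellent local subring of $T$ — in the spirit of the constructions of Section~\ref{sec:construction} and of \cite[Proposition~1]{heitmann} and \cite{loepp2018uncountable}. At successor stages I would adjoin a single carefully chosen element of $T$ so that the limit ring $A$ satisfies: (i) $A\to T/M^2$ is surjective; (ii) $P\cap A=(0)$ for every $P\in\Ass(T)$ — since $T$ is reduced these are the finitely many minimal primes, and exactly as in the proof of Lemma~\ref{lem:s_subring_domain} this forces $A$ to be a domain; (iii) $IT\cap A=IA$ for every finitely generated ideal $I$ of $A$, whence $A$ is Noetherian with $\widehat A=T$ by \cite[Proposition~1]{heitmann}; and, crucially, (iv) for every $P\in\Spec(A)$ and every $Q\in\Spec(T)$ with $Q\cap A=P$, the local ring of the formal fiber $T\otimes_A\kappa(P)$ at $Q$ is regular — in fact a localization of a polynomial ring over a field, hence geometrically regular over $\kappa(P)$. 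Granting (i)--(iv): part (iv) says $A$ has geometrically regular formal fibers, which is condition (a) of Definition~\ref{defn:excellent}; and since $A$ is now a local domain with $\widehat A=T$ equidimensional, Ratliff's theorem again gives that $A$ is universally catenary, condition (b). So $A$ is an excellent local domain with $\widehat A=T$, as required.

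\textbf{The hard part} will be executing (iv) compatibly with the bookkeeping (i)--(iii) through the recursion. Unlike the other conditions, (iv) ranges over $\Spec(A)$, which is not known in advance and grows as elements are adjoined; so at each stage one must anticipate the contractions $P\cap A$ of primes $P\in\Spec(T)$ and, when such a prime first becomes relevant, arrange that the elements adjoined so far are algebraically independent modulo $P$ to the full extent allowed by $\dim T_P$, so that $T_P$ is generically a polynomial (or power series) extension of $A_{P\cap A}$ and the fiber at $P$ is regular. Reducedness and equidimensionality of $T$ are precisely what make these genericity requirements simultaneously achievable — equidimensionality guarantees the ``generic'' relative dimension is the same along every branch, and reducedness rules out nilpotent obstructions in the fibers — and at each stage only countably many ``bad'' cosets must be avoided, so suitable elements exist because $T/P$ is uncountable for every minimal prime $P$ of $T$ (Lemma~\ref{lem:t_mod_p_uncountable}). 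Checking that these conditions never conflict with one another or with the Noetherianness/completion requirements, and that the resulting ring really has \emph{all} its formal fibers geometrically regular, is the technical core of the argument.
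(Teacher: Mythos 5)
Your necessity argument is fine: injectivity of $A\to T$ gives characteristic zero and nonzerodivisor integers via flatness, excellent (hence Nagata) reduced local rings are analytically unramified so $T$ is reduced, and Ratliff's theorem converts universal catenarity of the domain $A$ into equidimensionality of $\widehat A$. But note that in this paper the statement is used purely as a citation to \cite{loepp03}; there is no internal proof, and the substance of the cited theorem is entirely the sufficiency direction, which your proposal does not actually prove. What you give there is a plan: conditions (i)--(iii) are the standard Heitmann-style bookkeeping, but the decisive condition (iv) -- that \emph{every} formal fiber of the limit ring be geometrically regular -- is exactly the content of Loepp's construction, and you explicitly defer it (``the technical core of the argument''). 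A statement-plus-strategy of this kind is not a proof of the theorem; the gap is the whole construction.

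Moreover, the sketch of how you would achieve (iv) has concrete problems as stated. The requirement ranges over $\operatorname{Spec}(A)$, which is created by the recursion, and your proposed remedy -- keeping adjoined elements ``algebraically independent modulo $P$ to the full extent allowed by $\dim T_P$'' so that the fiber at each $Q$ is a localization of a polynomial ring -- is not a condition you have shown can be maintained simultaneously with (ii) and (iii): the ideal-contraction requirement $IT\cap A=IA$ and the genericity requirements pull in opposite directions, and you give no mechanism for resolving this (the actual proof controls which primes of $T$ contract to which primes of $A$ and verifies regularity of $(T/PT)_Q$ via a criterion like Lemma~\ref{lem:excellent_sufficient_criteria}, rather than by forcing polynomial fibers). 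The cardinality claim is also unsupported: it is not true that ``at each stage only countably many bad cosets must be avoided'' -- the relevant families of primes and cosets of $T$ have cardinality up to $|T|$, and the known constructions need careful closure and cardinality arguments (transfinite chains indexed by $|T|$, not $\omega_1$) to make the avoidance work. Finally, since the hypothesis is only $\Z\subseteq T$, residue fields of characteristic $p>0$ can occur, so geometric regularity of the fibers is genuinely stronger than regularity and must be addressed, not obtained as a byproduct of an unverified polynomial-ring description. Unless you carry out (iv) in detail, the sufficiency half remains unproved.
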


We characterize completions of uncountable excellent local domains with countable spectra, assuming that the completion contains the rationals. We make use of results from \cite{paper1} to accomplish this.
The following lemma identifies sufficient conditions on a local ring to be excellent.

\begin{lemma}[\cite{paper1}, Lemma 2.8]\label{lem:excellent_sufficient_criteria}
Let $(T,M)$ be a complete local ring that is equidimensional and suppose $\Q\subseteq T$. Given a subring $(A,A\cap M)$ of $T$ with $\widehat{A}=T$, $A$ is excellent if, for every $P\in\Spec(A)$ and for every $Q\in\Spec(T)$ with $Q\cap A=P$, $(T/PT)_Q$ is a regular local ring.
\end{lemma}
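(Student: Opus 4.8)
The plan is to verify directly the two conditions defining excellence in Definition~\ref{defn:excellent} for the ring $A$: (a) that $\widehat A\otimes_A L$ is regular for every $P\in\Spec(A)$ and every finite field extension $L$ of $k(P)$, that is, that the formal fibers of $A$ are geometrically regular; and (b) that $A$ is universally catenary. I will treat (a) and (b) separately: the hypothesis on the rings $(T/PT)_Q$ drives (a), while the equidimensionality of $T$ drives (b). One preliminary observation used throughout (a) is that, since $\Q\subseteq T$ and $A\subseteq T$, the ring $A$ has characteristic zero, so every residue field $k(P)$ of $A$ is perfect; consequently a Noetherian $k(P)$-algebra is geometrically regular over $k(P)$ exactly when it is regular.

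For (a), I would fix $P\in\Spec(A)$ and unwind the formal fiber $\widehat A\otimes_A k(P)$. A routine base-change computation identifies it with $W^{-1}(T/PT)$, where $W=A\setminus P$, and its localizations at prime ideals are precisely the rings $(T/PT)_Q$, where $Q$ runs over the primes of $T$ satisfying $Q\cap A=P$ (avoiding $W$ forces $Q\cap A\subseteq P$, while containing $PT$ forces $Q\cap A\supseteq P$). By hypothesis each such localization is a regular local ring, so the formal fiber is a regular ring; since $k(P)$ is perfect it is then geometrically regular over $k(P)$, which is exactly condition (a).

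For (b), I would show that $\widehat{A/P}\cong T/PT$ is equidimensional for every $P\in\Spec(A)$, and then invoke Ratliff's theorem---a Noetherian local ring $R$ is universally catenary if and only if $\widehat{R/\mathfrak p}$ is equidimensional for every $\mathfrak p\in\Spec(R)$---to conclude that $A$ is universally catenary. Fix $P$ and let $Q\in\Min(PT)$. By Lemma~\ref{lem:SMALL19_minprime}, $Q\cap A=P$, so $A_P\to T_Q$ is a flat local homomorphism; since $Q$ is minimal over $PT$, its closed fiber $T_Q/PT_Q$ is zero-dimensional, and the dimension formula for flat local maps gives $\height Q=\dim T_Q=\dim A_P+\dim(T_Q/PT_Q)=\height P$. (Alternatively, the hypothesis makes $(T/PT)_Q$ a zero-dimensional regular local ring, hence a field, so $PT_Q=QT_Q$, and the same conclusion follows.) Since $T$ is complete it is catenary, and being also equidimensional it satisfies $\dim(T/Q)=\dim T-\height Q=\dim T-\height P$ for every $Q\in\Min(PT)$; thus all minimal primes of $PT$ have the same coheight, so $T/PT$ is equidimensional.

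I expect the main obstacle to be part (b): one must recognize that universal catenarity is best obtained by reducing to the equidimensionality of each $\widehat{A/P}$ and appealing to Ratliff's theorem, and then see that the heights of the minimal primes of $PT$ are pinned down by the flat-local dimension formula applied to $A_P\to T_Q$ (using that $\widehat A=T$ is faithfully flat over $A$). Part (a) is comparatively mechanical once the formal fiber is identified as a localization of $T/PT$ and the characteristic-zero reduction from geometric regularity to regularity is in hand.
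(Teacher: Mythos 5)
The paper itself offers no proof of this lemma --- it is quoted from \cite{paper1} --- so there is no in-paper argument to compare against; judged on its own, your proof is correct and follows the natural route: identify the formal fiber $\widehat{A}\otimes_A k(P)$ with $W^{-1}(T/PT)$, $W=A\setminus P$, whose localizations at primes are exactly the rings $(T/PT)_Q$ with $Q\cap A=P$, and obtain universal catenarity from Ratliff's criterion after showing $\widehat{A/P}\cong T/PT$ is equidimensional, using Lemma~\ref{lem:SMALL19_minprime}, the flat local dimension formula for $A_P\to T_Q$, and the fact that $T$ is catenary and equidimensional. This is essentially the standard excellence verification used in this circle of papers (compare the proof of Proposition~\ref{prop:b_excellent}), so I have no structural objection.

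One small repair in part (a): the inference ``$A$ has characteristic zero, so every residue field $k(P)$ is perfect'' is not valid as stated --- for instance $\Z[t]_{(p)}$ has characteristic zero but residue field $\mathbb{F}_p(t)$, which is imperfect. What you actually need, and what is true here, is that $k(P)$ has characteristic zero: every prime $P$ of $A$ is contained in $A\cap M\subseteq M$, and every nonzero integer is a unit of $T$ because $\Q\subseteq T$, hence no nonzero integer lies in any prime of $A$; thus $\Q\subseteq A_P$ and $k(P)$ has characteristic zero, so all finite extensions of $k(P)$ are separable and regularity of the fiber gives geometric regularity. With that one-line fix the argument is complete.
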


The following result characterizes completions of countable excellent local domains, assuming that the completion contains the rationals.

\begin{theorem}[\cite{paper1}, Theorem 3.10]\label{thm:countable_excellent_dimgeq1_char}
Let $(T,M)$ be a complete local ring with $\Q\subseteq T$. Then $T$ is the completion of a countable excellent local domain if and only if the following conditions hold:
\begin{enumerate}
    \item $T$ is equidimensional,
    \item $T$ is reduced, and
    \item $T/M$ is countable.
\end{enumerate}
\end{theorem}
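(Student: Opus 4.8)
The plan is to prove both implications, with the forward (necessity) direction being a quick deduction from standard properties of excellent rings and the converse (sufficiency) requiring a transfinite construction of a subring of $T$.

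For necessity, suppose $A$ is a countable excellent local domain with $\widehat{A}=T$. First, $T/M\cong A/(A\cap M)$ is a quotient of the countable ring $A$, so $T/M$ is countable, giving (3). Since $A$ is excellent it is universally catenary, and a universally catenary Noetherian local domain is formally equidimensional (Ratliff), so $T=\widehat{A}$ is equidimensional, giving (1). Finally, a reduced excellent local ring is analytically unramified, so $T=\widehat{A}$ is reduced, giving (2). (The hypothesis $\Q\subseteq T$ is not used for this direction.)

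For sufficiency, assume (1), (2), (3) and $\Q\subseteq T$. If $\dim T=0$ then $T$ is a zero-dimensional reduced local ring, hence a field, and by (3) a countable one; since $\widehat{T}=T$ and fields are excellent, $A=T$ works, so assume $\dim T\geq 1$. I would first record two reductions: $\Q\subseteq T$ together with (2) forces no integer of $T$ to be a zero divisor, and since $T$ is reduced we have $\Ass(T)=\Min(T)$, so (1) and $\dim T\geq 1$ force $M\notin\Ass(T)$. Thus Theorem~\ref{prop:small19_countable_domain} already gives a countable local domain with completion $T$, and Theorem~\ref{thm:loepp_excellent_domain_char} already gives a (possibly uncountable) excellent local domain with completion $T$; the point is to produce a single ring with both properties.

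I would build $A$ as the union of an ascending chain $R_0\subseteq R_1\subseteq\cdots$ of countable quasi-local subrings $(R_i,R_i\cap M)$ of $T$, starting with $R_0$ a countable subring that surjects onto $T/M^2$ (possible since $T/M^2$ is countable by Proposition~\ref{prop:t/m^2_countable}) and that contains a fixed countable local domain with completion $T$. Dovetailing through a countable list of tasks, at successive stages I would (i) close off under the operation ``$IT\cap R=IR$ for finitely generated ideals $I$''; (ii) arrange $R_i\cap\mathfrak{p}=(0)$ for each of the finitely many minimal primes $\mathfrak{p}$ of $T$, so that the limit is a domain; and (iii) for each of the countably many primes $P$ that eventually appears in $\Spec(A)$ — recall a countable Noetherian ring has countable spectrum — adjoin countably many elements so as to force the formal fiber of $A$ over $P$ to be regular. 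Since each task adjoins only countably many elements, $A=\bigcup_i R_i$ is countable. Then Proposition~\ref{cor:basering_complete_macine} gives that $A$ is Noetherian with $\widehat{A}=T$; task (ii) makes $A$ a domain; since $\widehat{A}=T$ is equidimensional, Ratliff's theorem makes $A$ universally catenary; and task (iii), via Lemma~\ref{lem:excellent_sufficient_criteria}, makes every formal fiber of $A$ regular, equivalently geometrically regular because $\Q\subseteq T$ forces every residue field $k(P)$ to be perfect. Hence $A$ is a countable excellent local domain with $\widehat{A}=T$.

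The hard part will be task (iii): forcing regular formal fibers while keeping $A$ countable. Handling each pair $(P,Q)$ of Lemma~\ref{lem:excellent_sufficient_criteria} by its own transfinite step, as in the usual constructions of excellent domains, would inflate the cardinality of $A$ to that of $T$; instead one must work one formal fiber at a time over the countably many primes $P\in\Spec(A)$ and show that ``regularity of the formal fiber over $P$'' is achievable by a single countable closure — for instance by arranging $PT$ to be prime with $T/PT$ remaining reduced and equidimensional, and then inducting on $\dim(T/PT)$, in the spirit of Loepp's proof of Theorem~\ref{thm:loepp_excellent_domain_char}. Verifying that this closure is genuinely countable, and organizing the dovetailing so that it reaches every prime that ultimately appears in $A$, is the crux; the remaining points (that $A$ is a domain, that $\widehat{A}=T$, and that $A$ is countable) are routine given the tools already assembled in the excerpt.
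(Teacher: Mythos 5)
The statement you are proving is not proved in this paper at all: it is quoted verbatim from \cite{paper1} (Theorem 3.10), so there is no in-paper argument to compare against, and your task amounts to reproving that external theorem. Your necessity direction is correct and standard: countability of $T/M$ from countability of $A$, Ratliff's theorem (universally catenary local domain $\Rightarrow$ formally equidimensional) for (1), and analytic unramifiedness of reduced excellent local rings for (2). Your preliminary reductions for sufficiency are also fine: the dimension-zero case, the fact that $\Q\subseteq T$ plus reducedness makes integers nonzerodivisors, and $\Ass(T)=\Min(T)$ with equidimensionality giving $M\notin\Ass(T)$, so that Theorem~\ref{prop:small19_countable_domain} yields a countable domain with completion $T$.

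The sufficiency direction, however, has a genuine gap, and it sits exactly where the theorem's content lies: your task (iii). You assert that for each prime $P$ that ``eventually appears'' in $\Spec(A)$ one can ``adjoin countably many elements so as to force the formal fiber of $A$ over $P$ to be regular,'' but you give no mechanism by which adjoining elements achieves regularity of $(T/PT)_Q$; moreover $P$ and $PT$ are moving targets (the primes of $A$ exist only after the whole chain is built, and $PT$ changes at every stage), so the dovetailing you invoke does not by itself remove the circularity. Your proposed device---arranging $PT$ prime with $T/PT$ reduced and equidimensional---already fails at $P=(0)$ when $T$ is not a domain (e.g.\ $T=\Q[[x,y,z]]/(xy)$ is allowed by the hypotheses), and for nonzero $P$ reducedness of $T/PT$ is far weaker than regularity of $(T/PT)_Q$; in addition, handling the generic formal fiber requires guaranteeing that every $Q\in\Spec(T)$ with $T_Q$ singular contracts to a nonzero prime of $A$, a control your sketch never provides. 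Note the contrast with how the present paper transfers excellence in Proposition~\ref{prop:b_excellent}: there every ideal of the big ring is extended from an already-excellent countable seed $S$, so $(T/PT)_Q=(T/(p_1,\ldots,p_k)T)_Q$ is regular because $S$ is excellent and Lemma~\ref{lem:excellent_sufficient_criteria} applies. In your situation there is no excellent seed---that is precisely what must be constructed---so no such transfer is available, and the step you yourself label ``the crux'' is left unproven. As written, the sufficiency half is a plan rather than a proof.
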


Given that complete local rings are also excellent, we have the following corollary of Proposition~\ref{prop:dim0_precompletion_domain}.  In particular, Corollary~\ref{cor:dim0_precompletion_excellent_domain} characterizes completions of uncountable excellent local domains with countable spectra in the dimension zero case.

\begin{corollary}\label{cor:dim0_precompletion_excellent_domain}
Suppose $(T,M)$ is a complete local ring with $\dim T=0$. Then $T$ is the completion of an uncountable excellent local domain with a countable spectrum if and only if $T$ is an uncountable field.
\end{corollary}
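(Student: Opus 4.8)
The plan is to obtain this corollary essentially for free from Proposition~\ref{prop:dim0_precompletion_domain}, since the only difference is the extra adjective ``excellent,'' and in dimension zero that adjective is automatic. Recall the two standard facts I would invoke: every field is an excellent domain (a field is regular, hence excellent, and is vacuously universally catenary), and more generally every complete local ring is excellent. With those in hand the argument is a two-line reduction in each direction.

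For the forward direction, I would start from an uncountable excellent local domain $A$ with countable spectrum and $\widehat{A}=T$. Forgetting excellence, $A$ is simply an uncountable local domain with a countable spectrum whose completion is $T$; since $\dim T=0$ by hypothesis, Proposition~\ref{prop:dim0_precompletion_domain} immediately yields that $T$ is an uncountable field. For the converse, I would take $T$ to be an uncountable field and check that $T$ itself is the witness: it is an uncountable local domain, its maximal ideal is $(0)$ so $\Spec(T)=\{(0)\}$ is (finite, hence) countable, it is already complete so $\widehat{T}=T$, and it is excellent because it is a complete local ring (or directly because it is a field). Thus $T$ is the completion of an uncountable excellent local domain with a countable spectrum.

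There is no real obstacle here; the mild point worth stating explicitly is that excellence is inherited automatically in the dimension-zero case, so that no new construction from Section~\ref{sec:construction} is needed and the corollary is purely a combination of Proposition~\ref{prop:dim0_precompletion_domain} with the cited standard facts about excellent rings. I would keep the write-up to a single short paragraph handling both implications.
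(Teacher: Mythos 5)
Your proposal is correct and matches the paper's argument: the forward direction is exactly the application of Proposition~\ref{prop:dim0_precompletion_domain} after forgetting excellence, and the converse takes $T$ itself as the witness, with excellence automatic since complete local rings (in particular fields) are excellent. No further comment is needed.
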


\begin{proof}
Suppose $T$ is the completion of an uncountable excellent local domain with a countable spectrum. By Proposition~\ref{prop:dim0_precompletion_domain}, we have that $T$ is an uncountable field.

Suppose $T$ is an uncountable field. Then, $T$ is the completion of an uncountable excellent local domain with a countable spectrum, namely itself. 
\end{proof}

We now identify sufficient conditions for a complete local ring of dimension at least one and containing the rationals to be the completion of an uncountable excellent local domain with a countable spectrum. The following proposition is a modification of \cite[Theorem 6.4]{loepp2018uncountable}

\begin{proposition}\label{prop:b_excellent}
Suppose $(T,M)$ is a complete local ring such that $\dim T\geq 1$ and $\Q\subseteq T$, and suppose $T$ satisfies the following conditions:
\begin{enumerate}
    \item $T$ is reduced,
    \item $T$ is equidimensional, and
    \item $T/M$ is countable.
\end{enumerate}
Then $T$ is the completion of an uncountable excellent local domain with a countable spectrum.
\end{proposition}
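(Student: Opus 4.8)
The plan is to reduce to the countable case treated in \cite{paper1} and then transfer excellence along the inclusion $S\subseteq B$. Since $T$ is equidimensional, reduced, and $T/M$ is countable, Theorem~\ref{thm:countable_excellent_dimgeq1_char} provides a countable excellent local domain $(S,S\cap M)$ with $\widehat{S}=T$. As $\dim T\geq 1$, Theorem~\ref{thm:B_completionT} then yields an uncountable local domain $(B,B\cap M)$ with $S\subseteq B\subseteq T$, $\widehat{B}=T$, and every ideal of $B$ extended from $S$. By Proposition~\ref{prop:B_countable_spec}, $\Spec(B)$ is countable. So the only thing left to prove is that $B$ is excellent.

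To do this I would invoke Lemma~\ref{lem:excellent_sufficient_criteria}: since $T$ is equidimensional, $\Q\subseteq T$, and $\widehat{B}=T$, it suffices to show that for every $P\in\Spec(B)$ and every $Q\in\Spec(T)$ with $Q\cap B=P$, the ring $(T/PT)_Q$ is a regular local ring. Fix such $P$ and $Q$ and set $P_0\coloneqq P\cap S\in\Spec(S)$. Because $B$ is Noetherian with ideals extended from $S$, we may write $P=(a_1,\ldots,a_n)B$ with $a_i\in S$, and Remark~\ref{rem:BS_same_generators} gives $P_0=(a_1,\ldots,a_n)S$; hence $PT=(a_1,\ldots,a_n)T=P_0T$. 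Also $Q\cap S=(Q\cap B)\cap S=P\cap S=P_0$, so $Q$ lies over $P_0$ in $S$ and $(T/PT)_Q=(T/P_0T)_Q$.

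Now I would use the excellence of $S$. The formal fiber $\widehat{S}\otimes_S k(P_0)=T\otimes_S k(P_0)$ is obtained from $T/P_0T$ by inverting the nonzero elements of $S/P_0$, and its localization at the prime corresponding to $Q$ is precisely $(T/P_0T)_Q$. Taking $L=k(P_0)$ in part (a) of Definition~\ref{defn:excellent} shows that $T\otimes_S k(P_0)$ is regular, i.e.\ every localization of it is a regular local ring; in particular $(T/P_0T)_Q$, and hence $(T/PT)_Q$, is a regular local ring. Since $P$ and $Q$ were arbitrary, Lemma~\ref{lem:excellent_sufficient_criteria} applies and $B$ is excellent, completing the proof.

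The one point requiring care is the identification $PT=(P\cap S)T$ together with the fact that any $Q\in\Spec(T)$ contracting to $P$ in $B$ also contracts to $P\cap S$ in $S$; this is exactly what lets the formal fibers of $B$ be computed from those of $S$, after which excellence of $B$ is a formal consequence of excellence of $S$ via Lemma~\ref{lem:excellent_sufficient_criteria}. The argument is uniform in $P$, so no special cases (such as $P=(0)$ or $P=B\cap M$) need separate treatment.
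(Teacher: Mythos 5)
Your proposal is correct and follows essentially the same route as the paper's proof: reduce to a countable excellent local domain $S$ via Theorem~\ref{thm:countable_excellent_dimgeq1_char}, build $B$ with Theorem~\ref{thm:B_completionT} and Proposition~\ref{prop:B_countable_spec}, and verify excellence through Lemma~\ref{lem:excellent_sufficient_criteria} by showing $Q\cap S=P\cap S$ and $PT=(P\cap S)T$ using that ideals of $B$ are extended from $S$. The only difference is cosmetic: you cite Remark~\ref{rem:BS_same_generators} and spell out the formal-fiber argument that $(T/P_0T)_Q$ is a localization of $T\otimes_S k(P_0)$, whereas the paper rederives the generator identity directly and compresses the last step into ``since $S$ is excellent and has completion $T$.''
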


\begin{proof}
Since $T$ satisfies conditions (1), (2), and (3), we have by Theorem~\ref{thm:countable_excellent_dimgeq1_char} that $T$ is the completion of a countable excellent local domain, $(S,S\cap M)$.  By Theorem~\ref{thm:B_completionT}, there exists an uncountable local domain $(B,B\cap M)$ such that $S\subseteq B\subseteq T$, $\widehat{B}=T$, and ideals of $B$ are extended from $S$. Finally, by Proposition~\ref{prop:B_countable_spec}, we have that $B$ has a countable spectrum. Thus, it remains to show that $B$ is excellent.

Since $\widehat{B}=T$ and $T$ is assumed to contain $\Q$ and be equidimensional, we can apply Lemma~\ref{lem:excellent_sufficient_criteria}.  In particular, it suffices to show that, for every $P\in\Spec(B)$ and for every $Q\in\Spec(T)$ satisfying $Q\cap B=P$, we have that $(T/PT)_Q$ is a regular local ring. Let $P\in\Spec(B)$, and suppose $Q\in\Spec(T)$ such that $Q\cap B=P$. Then,
\[Q\cap S=(Q\cap B)\cap S=P\cap S.\]
Since ideals of $B$ are extended from $S$, we can write $P$ as $P=(p_1,\ldots,p_k)B$ for $p_i\in S$. Then,
\[P\cap S=(p_1,\ldots,p_k)B\cap S\subseteq(p_1,\ldots,p_k)T\cap S=(p_1,\ldots,p_k)S.\]
Note that $(p_1,\ldots,p_k)S\subseteq P\cap S$, and so $(p_1,\ldots,p_k)B\cap S=(p_1,\ldots,p_k)S$. It follows that $Q\cap S=(p_1,\ldots,p_k)S$.

Now, $(T/(p_1,\ldots,p_k)T)_Q$ is a regular local ring, since $S$ is excellent and has completion $T$. But notice that $(p_1,\ldots,p_k)T=PT$, so we have that $(T/(p_1,\ldots,p_k)T)_Q=(T/PT)_Q$, and $(T/PT)_Q$ is a regular local ring as well. Thus, by Lemma~\ref{lem:excellent_sufficient_criteria}, $B$ is excellent.
\end{proof}

We are now able to prove another main result.

\begin{theorem}\label{thm:dim2+_excellent_precompletion_domain}
Suppose $(T,M)$ is a complete local ring with $\Q\subseteq T$.
\begin{itemize}
    \item If $\dim T=0$, then $T$ is the completion of an uncountable excellent local domain with a countable spectrum if and only if $T$ is an uncountable field.
    \item If $\dim T=1$, then $T$ is the completion of an uncountable excellent local domain with a countable spectrum if and only if 
\begin{enumerate}
    \item $T$ is reduced, and
    \item $T$ is equidimensional.
\end{enumerate}
    \item If $\dim T\geq 2$, then $T$ is the completion of an uncountable excellent local domain with a countable spectrum if and only if
\begin{enumerate}
    \item $T$ is reduced,
    \item $T$ is equidimensional, and
    \item $T/M$ is countable.
\end{enumerate}
\end{itemize}
\end{theorem}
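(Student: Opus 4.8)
The plan is to handle the three dimension ranges separately, reducing each to results already established. The case $\dim T = 0$ is nothing new: it is precisely Corollary~\ref{cor:dim0_precompletion_excellent_domain}.

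For necessity when $\dim T \geq 1$, suppose $T = \widehat{A}$ for an uncountable excellent local domain $A$ with a countable spectrum. Since $\mathbb{Q} \subseteq T$, the ring $T$ contains the integers, and since $\mathbb{Q}$ is a field no nonzero integer of $T$ is a zero divisor; as $A$ is a local excellent domain with completion $T$, Theorem~\ref{thm:loepp_excellent_domain_char} then forces $T$ to be reduced and equidimensional. If moreover $\dim T \geq 2$, then since $A$ is a local domain with a countable spectrum, Theorem~\ref{thm:small19_domain_char_countable_spec} additionally forces $T/M$ to be countable. This is exactly the list of conditions claimed in the $\dim T = 1$ and $\dim T \geq 2$ cases.

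For sufficiency when $\dim T \geq 1$, I would split on the cardinality of $T/M$. If $T/M$ is countable, the standing hypotheses provide all three conditions of Proposition~\ref{prop:b_excellent}: namely $T$ is reduced, $T$ is equidimensional, and $T/M$ is countable (the last being a hypothesis when $\dim T \geq 2$ and the case assumption when $\dim T = 1$); that proposition then yields an uncountable excellent local domain with completion $T$ and a countable spectrum. The remaining subcase is $\dim T = 1$ with $T/M$ uncountable, where the construction of Section~\ref{sec:construction} cannot be used, since no countable local ring has completion $T$. Here $T$ is reduced and equidimensional by hypothesis, and contains the integers with no nonzero integer a zero divisor, so Theorem~\ref{thm:loepp_excellent_domain_char} provides a local excellent domain $A$ with $\widehat{A} = T$. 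Because the completion map carries $A$ onto $A/(A\cap M) \cong T/M$, which is uncountable, $A$ itself is uncountable; and $\dim A = \dim T = 1$, so $A$ has a countable (indeed finite) spectrum, just as in the proof of Theorem~\ref{thm:dim2+_precompletion_domain}.

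The step I expect to be the main obstacle is this last subcase, $\dim T = 1$ with uncountable residue field: the machinery of Section~\ref{sec:construction} does not apply, so one must instead rely on Loepp's characterization together with the observation that any local ring with completion $T$ has residue field $T/M$, hence is uncountable once $T/M$ is. Everything else is routine---extracting that $T$ contains the integers and that no integer of $T$ is a zero divisor from $\mathbb{Q}\subseteq T$, and tracking which hypotheses are active in which dimension range (in particular, that Proposition~\ref{prop:b_excellent} is stated for all $\dim T\geq 1$).
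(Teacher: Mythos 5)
Your proposal is correct and follows essentially the same route as the paper: dimension zero via Corollary~\ref{cor:dim0_precompletion_excellent_domain}, sufficiency via Proposition~\ref{prop:b_excellent} when $T/M$ is countable and via Theorem~\ref{thm:loepp_excellent_domain_char} in the one-dimensional uncountable-residue-field subcase, and necessity via Theorems~\ref{thm:loepp_excellent_domain_char} and~\ref{thm:small19_domain_char_countable_spec}. The only cosmetic difference is that you get uncountability of $A$ directly from $A/(A\cap M)\cong T/M$, whereas the paper routes through the surjection $A\to T/M^2$ and Proposition~\ref{prop:t/m^2_countable}; both are fine.
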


\begin{proof}
If $\dim T=0$ then the desired statement is given by Corollary~\ref{cor:dim0_precompletion_excellent_domain}.

Let $(T,M)$ be a complete local ring with $\dim T\geq 1$, $\Q\subseteq T$, and suppose that $T$ satisfies conditions (1), (2), and (3). By Proposition~\ref{prop:b_excellent}, we have that $T$ is the completion of an uncountable excellent local domain with a countable spectrum. Now suppose $(T,M)$ is a complete local ring with $\dim T=1$, such that $T/M$ is uncountable and $T$ satisfies conditions (1) and (2). Since $T$ satisfies these conditions and $\Q\subseteq T$ (so every nonzero integer is a unit and not a zero divisor), $T$ is the completion of a local excellent domain $(A,A\cap M)$, by Theorem~\ref{thm:loepp_excellent_domain_char}. Note that $\dim A = 1$ and so $A$ has a countable spectrum.  Since the completion of $A$ is $T$, the map $A\to T/M^2$ is surjective (see, for example, Proposition 2.4 of \cite{paper1}). By Proposition~\ref{prop:t/m^2_countable}, we have that
\[\abs{A}\geq\abs{T/M^2}=\abs{T/M}.\]
Thus, since $T/M$ is uncountable, $A$ is uncountable as well.

Now suppose that $(T,M)$ is a complete local ring with $\dim T\geq 1$ and it is the completion of an uncountable excellent local domain with a countable spectrum. By Theorem~\ref{thm:loepp_excellent_domain_char}, it must be that $T$ is reduced and equidimensional. If $\dim T\geq 2$, then we must also have that $T/M$ is countable by Theorem~\ref{thm:small19_domain_char_countable_spec}.
\end{proof}

\begin{example}
If $T=\Q[[x,y,z]]/(xy)$, then, by Theorem~\ref{thm:dim2+_excellent_precompletion_domain}, we have that $T$ is the completion of an uncountable excellent local domain with a countable spectrum. 
\end{example}

We now have the following corollary in the case that $T$ is a UFD.

\begin{corollary}\label{cor:uncount_excellent_UFD_count_spec_char}
Suppose $(T,M)$ is a complete local UFD with $\Q\subseteq T$.
\begin{itemize}
    \item If $\dim T=0$, then $T$ is the completion of an uncountable excellent local UFD with a countable spectrum if and only if $T$ is uncountable.
    \item If $\dim T=1$, then $T$ is the completion of an uncountable excellent local UFD with a countable spectrum.
    \item If $\dim T\geq 2$, then $T$ is the completion of an uncountable excellent local UFD with a countable spectrum if and only if $T/M$ is countable.
\end{itemize}
\end{corollary}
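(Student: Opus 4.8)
The plan is to deduce this corollary from Theorem~\ref{thm:dim2+_excellent_precompletion_domain} together with the existing characterizations of completions of countable excellent local UFDs and completions of uncountable local UFDs available in the literature (the paper already signals it will invoke Theorem~\ref{thm:char_ufd_completions_dim_geq2}, and results such as \cite[Theorem 8]{heitmannUFD}). The key observation is that when $T$ is itself a UFD, the hypotheses ``$T$ is reduced'' and ``$T$ is equidimensional'' from Theorem~\ref{thm:dim2+_excellent_precompletion_domain} come for free: a UFD is a domain, hence reduced, and a complete local domain has $M\notin\Ass(T)$ and no integer a zero divisor and is equidimensional. So the dimension $0$, $1$, and $\geq 2$ bullets will reduce, respectively, to ``$T$ uncountable,'' ``no condition,'' and ``$T/M$ countable.''

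First I would dispatch the $\dim T = 0$ case: a complete local UFD of dimension zero is a field, and a field is its own completion, is trivially an excellent UFD with a one-point (hence countable) spectrum, so the statement reduces to the tautology that $T$ is the completion of an uncountable such ring iff $T$ is uncountable. Next, for $\dim T \geq 1$, I would run the construction of Section~\ref{sec:construction}: starting from a \emph{countable} excellent local UFD $S$ with $\widehat S = T$ (whose existence is exactly the content of the relevant UFD-completion theorem cited in the paper, applicable because $T$ is reduced, equidimensional, has $\Q\subseteq T$, and — in the $\dim\geq 2$ case — has countable residue field), Theorem~\ref{thm:B_completionT} produces an uncountable local domain $(B,B\cap M)$ with $S\subseteq B\subseteq T$, $\widehat B = T$, and all ideals of $B$ extended from $S$. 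Proposition~\ref{prop:B_countable_spec} gives that $\Spec(B)$ is countable, and the argument of Proposition~\ref{prop:b_excellent} (verbatim, since it used only that $T$ is equidimensional and contains $\Q$) shows $B$ is excellent. It then remains to show $B$ is a UFD.

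The main obstacle is precisely this last point: transferring the UFD property from $S$ to $B$. The natural route is to use Proposition~\ref{prop:specb_specs_isomorphic}, which gives an order isomorphism $\varphi\colon\Spec(B)\to\Spec(S)$, $P\mapsto P\cap S$, together with Remark~\ref{rem:BS_same_generators}, which says corresponding primes are generated by the same elements of $S$. Since $S$ is Noetherian, $S$ is a UFD iff every height-one prime of $S$ is principal; by the order isomorphism, height-one primes of $B$ correspond to height-one primes of $S$, and if $Q = pS$ is a height-one prime of $S$ then the corresponding prime of $B$ is $pB$ (using Remark~\ref{rem:BS_same_generators}), which is principal. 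Hence every height-one prime of $B$ is principal, and since $B$ is Noetherian, $B$ is a UFD. I would then assemble the three cases exactly as in the proof of Theorem~\ref{thm:dim2+_excellent_precompletion_domain}: sufficiency from the construction just described, and necessity by noting that if $T$ is the completion of an uncountable excellent local UFD $B$ with countable spectrum, then $T = \widehat B$ forces $T$ to be a domain (being the completion of... — actually here one must be slightly careful, as $\widehat B$ need not be a UFD, but $B$ a domain gives $T$ reduced, equidimensional via excellence of $B$, hence the hypotheses of Theorem~\ref{thm:dim2+_excellent_precompletion_domain} hold), and then Theorem~\ref{thm:small19_domain_char_countable_spec} forces $T/M$ countable when $\dim T \geq 2$, while for $\dim T = 1$ no further condition is needed. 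The one genuinely delicate wrinkle to check is that in the $\dim T = 1$, uncountable-residue-field subcase, the domain $A$ produced by Theorem~\ref{thm:loepp_excellent_domain_char} (or its UFD analogue) is genuinely a UFD and uncountable — but this is handled exactly as in Theorem~\ref{thm:dim2+_excellent_precompletion_domain} via the surjectivity of $A\to T/M^2$ and Proposition~\ref{prop:t/m^2_countable}, provided a countable excellent UFD completion result with uncountable residue field is available, which in dimension one is automatic since one-dimensional domains have countable spectra anyway.
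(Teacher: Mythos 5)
There is a genuine gap in your sufficiency argument: the seed ring you start from does not exist by anything cited in the paper. You begin the construction with ``a countable excellent local UFD $S$ with $\widehat S=T$,'' claiming its existence is ``exactly the content of the relevant UFD-completion theorem cited in the paper.'' No such theorem is available: Heitmann's result (Theorem~\ref{thm:heitmann_UFD_suff}) produces a countable local UFD but says nothing about excellence, while Theorem~\ref{thm:countable_excellent_dimgeq1_char} produces a countable excellent local domain but not a UFD. The same problem recurs in your $\dim T=1$, uncountable-residue-field subcase, where you invoke Theorem~\ref{thm:loepp_excellent_domain_char} ``or its UFD analogue'' --- again, no UFD analogue is cited. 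The missing ingredient is the standard descent fact (Mori's theorem) that a Noetherian local ring whose completion is a UFD is itself a UFD; without it, your plan of transferring factoriality \emph{up} from $S$ to $B$ via Proposition~\ref{prop:specb_specs_isomorphic} and Remark~\ref{rem:BS_same_generators} never gets off the ground, because you have no factorial $S$ to start from.

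This is in fact exactly where the paper's proof diverges from yours, and why it is much shorter: since $T$ is assumed to be a UFD here, the paper simply applies Theorem~\ref{thm:dim2+_excellent_precompletion_domain} (whose hypotheses hold because a domain is reduced and a complete local domain is equidimensional) to get an uncountable excellent local domain $(A,A\cap M)$ with countable spectrum and $\widehat A=T$, and then concludes that $A$ is a UFD \emph{because its completion $T$ is} --- no re-run of the Section~\ref{sec:construction} machinery and no countable UFD seed are needed. Your approach can be repaired either by adopting that one-line descent argument directly (applied to the $A$ of Theorem~\ref{thm:dim2+_excellent_precompletion_domain}, or to your $B$), or by first applying it to the countable excellent domain $S$ of Theorem~\ref{thm:countable_excellent_dimgeq1_char} to see that $S$ is automatically a UFD and then proceeding as you propose; but as written, the existence of the excellent UFD seed is asserted, not proved. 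The necessity direction of your argument (via Theorem~\ref{thm:small19_domain_char_countable_spec}, or equivalently Theorem~\ref{thm:dim2+_precompletion_domain}) and the $\dim T=0$ case are fine.
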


\begin{proof}
First, if $\dim T=0$, then $T$ is a field since it is a domain. The result follows from the case $\dim T=0$ of Theorem \ref{thm:dim2+_excellent_precompletion_domain}.

Next, suppose that $(T,M)$ is a complete local UFD with $\dim T=1$ and $\Q\subseteq T$. Then, since $T$ is a domain, it is reduced and equidimensional. By Theorem~\ref{thm:dim2+_excellent_precompletion_domain}, $T$ is the completion of an uncountable excellent local domain with a countable spectrum, $(A,A\cap M)$. Since $T$ is a UFD, $A$ is as well. 

Suppose that $(T,M)$ is a complete local UFD with $\dim T\geq 2$, $\Q\subseteq T$, and $T/M$ is countable.  Since $T$ is a domain, it is reduced and equidimensional. By Theorem~\ref{thm:dim2+_excellent_precompletion_domain}, $T$ is the completion of an uncountable excellent local domain with a countable spectrum, $(A,A\cap M)$. Since $T$ is a UFD, $A$ is as well. If $(T,M)$ is the completion of an uncountable excellent local UFD with a countable spectrum $(A,A\cap M)$, then $T/M$ is countable by Theorem \ref{thm:dim2+_precompletion_domain}.
\end{proof}

\subsection{Uncountable UFDs with Countable Spectra}\label{subsec:ufd_uncountable}

In this subsection, we characterize completions of uncountable local UFDs with countable spectra. 
First, we provide results from \cite{heitmannUFD}, which identify necessary and sufficient conditions for a complete local ring to be the completion of a local UFD.

\begin{theorem}[\cite{heitmannUFD}, Theorem 1]\label{thm:heitmann_UFD_necc}
Let $R$ be an integrally closed local domain. Then no integer is a zero divisor in $\widehat{R}$. Moreover, $\widehat{R}$ is either a field, a DVR, or a ring with depth at least two.
\end{theorem}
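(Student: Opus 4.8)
The statement asserts that if $R$ is an integrally closed local domain, then no integer is a zero divisor of $\widehat{R}$, and moreover $\widehat{R}$ is a field, a DVR, or has depth at least two. Since this is Heitmann's theorem and the excerpt cites it as external input, the plan is to indicate the shape of Heitmann's original argument. First I would dispose of the low-dimensional cases directly: if $\dim R = 0$, then $R$ (being a domain that is Artinian local) is a field, hence $\widehat{R} = R$ is a field; and the zero-divisor claim is trivial since $\widehat R$ is a domain. If $\dim R = 1$, then an integrally closed Noetherian local domain of dimension one is a DVR (this is the classical Krull--Akizuki / normality criterion), and $\widehat R$ of a DVR is again a DVR, so again there are no zero divisors among integers.

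The substance is the case $\dim R \geq 2$, where I must show $\operatorname{depth} \widehat R \geq 2$, equivalently $\operatorname{depth} R \geq 2$ (depth is preserved under completion). The key tool is Serre's criterion: an integrally closed Noetherian domain satisfies the conditions $(R_1)$ and $(S_2)$. The condition $(S_2)$ says that for every prime $\frakp$, $\operatorname{depth} R_\frakp \geq \min(2, \height \frakp)$. Applying this at the maximal ideal, since $\height M = \dim R \geq 2$, we get $\operatorname{depth} R \geq 2$. So the core of the proof reduces to establishing (or citing) the fact that a normal Noetherian domain is $(S_2)$ — this is where the real content lies. I would either invoke Serre's normality criterion as a black box, or sketch the relevant direction: normality implies that the local rings at height-one primes are DVRs ($R_1$), and the failure of $(S_2)$ at some prime would produce an embedded component in a principal ideal, contradicting the fact that in a normal domain every principal ideal is unmixed of height one (which follows from $R = \bigcap_{\height \frakp = 1} R_\frakp$).

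For the zero-divisor statement separately from the structural trichotomy: $\widehat R$ of a normal local domain need not be a domain, so this genuinely requires an argument. The point is that if some integer $n$ were a zero divisor in $\widehat R$, then $n$ lies in some associated prime $Q \in \Ass(\widehat R)$; but the cases above show $\widehat R$ is either a field (vacuous), a DVR (a domain, vacuous), or has depth $\geq 2$, and in the depth $\geq 2$ case every associated prime $Q$ satisfies $\height Q = 0$ forces... — more carefully, in a local ring of positive depth there are no embedded primes at the maximal ideal and $\depth \widehat R \geq 1$ already means the maximal ideal contains a nonzerodivisor; one then argues that $\mathbb{Z} \to \widehat R$ has image avoiding all of $\Ass(\widehat R)$ because $R \subseteq \widehat R$ is a domain containing $\mathbb Z$ and faithfully flat, so a nonzero integer, being a nonzerodivisor in $R$, stays a nonzerodivisor in $\widehat R$ by faithful flatness.

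\textbf{Main obstacle.} The genuine difficulty is the $(S_2)$ input in dimension $\geq 2$: proving that normality forces $\operatorname{depth} \geq 2$ at the maximal ideal requires either Serre's criterion or an equivalent hands-on argument about unmixedness of principal ideals in normal domains. Everything else (the $\dim \leq 1$ classification, depth being insensitive to completion, the faithful-flatness argument for integers) is routine. I expect to present the $\dim \leq 1$ cases in a sentence each, cite Serre's criterion for $(S_2)$, and spend the bulk of the writeup on deducing $\operatorname{depth}\widehat R \geq 2$ and handling the integer-zero-divisor claim uniformly across the three cases.
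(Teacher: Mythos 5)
This statement is not proved in the paper at all: it is quoted verbatim as background, with the proof deferred to Heitmann's original article (\cite{heitmannUFD}, Theorem 1). So there is no in-paper argument to compare against, and the only question is whether your sketch is sound --- it is, and it is essentially the standard (indeed Heitmann's) argument. The dimension casework is correct: $\dim R=0$ gives a field; $\dim R=1$ gives a DVR, whose completion is again a DVR (its maximal ideal is generated by a uniformizer, which stays a nonzerodivisor, so $\widehat{R}$ is regular of dimension one); and for $\dim R\geq 2$ Serre's criterion gives $(S_2)$, hence $\depth R\geq 2$, and depth is unchanged by completion. For the integer statement, note that it does not actually need the trichotomy at all: for any local domain $A$ and any $a\in A$ that is a nonzerodivisor, flatness of $A\to\widehat{A}$ keeps $a$ a nonzerodivisor in $\widehat{A}$, which is exactly the argument you settle on; the earlier musings in that paragraph about associated primes and depth are unnecessary detours, but the final flatness argument is complete. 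One small interpretive point worth making explicit: if $R$ has characteristic $p>0$, some integers map to $0$ in $\widehat{R}$, and the convention in this literature is that ``no integer of $T$ is a zero divisor'' only constrains integers with nonzero image --- precisely the case your flatness argument covers.
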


\begin{theorem}[\cite{heitmannUFD}, Theorem 8]\label{thm:heitmann_UFD_suff}
Let $(T,M)$ be a complete local ring such that no integer is a zero divisor in $T$ and $\depth T\geq 2$. Then there exists a local UFD $A$ such that $\widehat{A}\cong T$ and $\abs{A}=\sup(\aleph_0,\abs{T/M})$. If $p\in M$ where $p$ is a nonzero prime integer, then $pA$ is a prime ideal.
\end{theorem}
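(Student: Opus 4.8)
The plan is to realize $A$ as an increasing union of ``precompletion subrings'' of $T$, in the spirit of the constructions of Section~\ref{sec:construction}, and then check the UFD property via Kaplansky's criterion (a Noetherian domain is a UFD if and only if every height-one prime is principal). Put $\lambda=\sup(\aleph_0,\abs{T/M})$. Starting from the prime subring of $T$, localized at its contraction to $M$ so as to be quasi-local, I would build an ascending chain $(R_\alpha)_{\alpha<\lambda}$ of quasi-local subrings $(R_\alpha,R_\alpha\cap M)$ of $T$, each of cardinality at most $\lambda$, taking unions at limit ordinals, and at successor stages ``closing off'' with respect to one item drawn from a bookkeeping list that interleaves four kinds of task: (a) making the composite $R_\alpha\to T\to T/M^2$ surjective by adjoining coset representatives; (b) for each finitely generated ideal $I$ of the ring built so far, adjoining elements so that $IT\cap R_\beta=IR_\beta$ at some later stage $\beta$, the standard adjunction which, by Proposition~1 of \cite{heitmann}, forces the limit ring to be Noetherian with completion $T$; (c) keeping each $R_\alpha$ a domain by only ever adjoining elements lying outside the finitely many associated primes of $T$; and (d) arranging that the limit ring be a greatest-common-divisor domain, by enumerating pairs $(a,b)$ of elements and, at the appropriate stage, adjoining a common divisor $g\in T$ of $a$ and $b$ chosen so that it becomes a gcd of $a$ and $b$ in $A$ itself.

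The hypotheses are exactly what make tasks (c) and (d) always executable. Since no integer is a zero divisor in $T$ and $\depth T\ge 2$, the maximal ideal $M$ is not contained in the union of the finitely many associated primes of $T$, so at each stage the finitely many elements demanded by (a), (b) and (d) can be found outside that union; this is a prime-avoidance argument of the kind pervasive in this literature, and the depth hypothesis is what provides the extra room needed to carry out (d) without disturbing the rest. Once $A=\bigcup_{\alpha<\lambda}R_\alpha$ is in hand, it is quasi-local with maximal ideal $A\cap M$ because the $R_\alpha\cap M$ are compatible; every finitely generated ideal of $A$ is extended from some $R_\alpha$, so the hypotheses of Proposition~1 of \cite{heitmann} pass to $A$, and hence $A$ is Noetherian with $\widehat{A}\cong T$; and the GCD property, together with Noetherianity (so that ACCP holds), makes $A$ a unique factorization domain. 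The cardinality assertion comes from a count: each closing-off step adjoins at most $\lambda$ elements (for (a) there are at most $\lambda$ cosets of $M^2$ in $T$ by Proposition~\ref{prop:t/m^2_countable}, and the other tasks adjoin only finitely many), and there are $\lambda$ steps, so $\abs{A}\le\lambda$; conversely $A$ surjects onto $T/M^2$ and is infinite, so $\abs{A}\ge\lambda$, whence $\abs{A}=\lambda$.

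I expect task (d) to be the main obstacle. The difficulty is twofold: the witnessing element $g$ for $\gcd(a,b)$ must remain a gcd after all subsequent adjunctions, that is, in $A$ and not merely in $R_{\alpha+1}$, which requires coordinating (d) with the ideal-theoretic closing-off of (b); and $g$ must be produced inside $T$ while avoiding the finitely many primes one must dodge for (c), which is where $\depth T\ge 2$ is genuinely used. Managing this interplay --- keeping the ring a small domain with completion $T$ while principalizing enough divisorial ideals to kill the divisor class group --- is the technical core of the argument. For the last sentence of the theorem I would add to the bookkeeping list the requirement that $pR_\alpha$ stay prime whenever $p\in M$ is a nonzero prime integer: as $p$ is then a nonzero nonunit and, by hypothesis, a non-zero-divisor in $T$, the adjunctions of (a)--(d) can be performed compatibly with reduction modulo $pT$, so that in the limit $pA$ is prime.
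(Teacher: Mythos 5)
This theorem is not proved in the paper at all: it is quoted verbatim from Heitmann (\cite{heitmannUFD}, Theorem 8), so the relevant comparison is with Heitmann's proof, and against that your outline has a genuine gap at its core, namely task (d). You propose to make the limit ring a GCD domain by enumerating pairs $(a,b)$ and adjoining a common divisor $g\in T$ that ``becomes a gcd of $a$ and $b$ in $A$ itself.'' Under the stated hypotheses $T$ need not be a domain, or even reduced --- the paper's own example $\Q[[x,y,z]]/(x^2)$ satisfies them --- so divisibility in $T$ is not a workable surrogate for divisibility in $A$: there is no reason a gcd of $a,b$ exists in $T$, no reason an element chosen in $T$ lands in (or can be forced into) $A$ compatibly with the requirement $IT\cap A=IA$, and, most importantly, no mechanism preventing later adjunctions from creating new common divisors that destroy the gcd property. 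You yourself identify this as ``the technical core,'' but the proposal supplies no idea for it, and ``$\depth T\geq 2$ provides the extra room'' is not an argument; the depth hypothesis has a precise, different role (for a regular element $t\in T$ it gives $\depth(T/tT)\geq 1$, i.e.\ $M\notin\Ass(T/tT)$, which is what makes the avoidance steps in the adjunction lemmas possible and excludes the field/DVR cases). Since forcing unique factorization is exactly the content of the theorem, the missing step is the theorem.

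Heitmann's actual route is structurally different and avoids creating gcds altogether: every ring in the transfinite chain is kept a UFD. His ``N-subrings'' are quasi-local UFDs $R\subseteq T$ of cardinality at most $\sup(\aleph_0,\abs{T/M})$ with $R\cap Q=(0)$ for all $Q\in\Ass(T)$ and, crucially, $\height(P\cap R)\leq 1$ for every $P\in\Ass(T/tT)$ with $t$ a regular element of $T$; the adjunction lemmas are engineered so that these properties persist and so that prime elements of an earlier ring remain prime in all later rings. The union $A$ is then a UFD for the elementary reason that every nonzero nonunit already factors into primes in some $R_\alpha$ and those primes stay prime in $A$, with Noetherianity and $\widehat{A}\cong T$ coming from the completion machine (Proposition 1 of \cite{heitmann}) and the cardinality count much as you describe; the statement about $pA$ being prime is likewise maintained stage by stage rather than recovered at the end. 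Your scaffolding (a)--(c), the use of the completion machine, and the cardinality argument are fine in outline, but without a working replacement for (d) --- either Heitmann's ``primes stay prime'' bookkeeping or an equally concrete device --- the proposal does not prove the theorem.
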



Notice that Theorem~\ref{thm:heitmann_UFD_suff} provides sufficient conditions for a complete local ring to be the completion of a countable local UFD.

We now characterize completions of uncountable local UFDs with countable spectra in the cases of dimension zero and dimension one.

\begin{proposition}\label{prop:char_ufd_dim0_1}
Suppose $(T,M)$ is a complete local ring. 
\begin{itemize}
    \item If $\dim T=0$, then $T$ is the completion of an uncountable local UFD with a countable spectrum if and only if $T$ is an uncountable field.
    \item If $\dim T=1$, then $T$ is the completion of an uncountable local UFD with a countable spectrum if and only if $T$ is a DVR.
\end{itemize}
\end{proposition}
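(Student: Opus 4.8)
The plan is to handle the two cases separately, using the same strategy as in the domain case: establish necessity via the cited characterizations, and establish sufficiency by invoking the relevant "countable UFD" construction together with the Section~\ref{sec:construction} machinery, then checking that the uncountable ring $B$ we produce is again a UFD.

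For the case $\dim T = 0$: if $T$ is the completion of an uncountable local UFD $A$ with a countable spectrum, then in particular $A$ is a local domain with a countable spectrum, so by Proposition~\ref{prop:dim0_precompletion_domain}, $T$ is an uncountable field. Conversely, if $T$ is an uncountable field, then $T$ is itself an uncountable local UFD with a countable spectrum and $\widehat{T} = T$, so we are done. This case is essentially immediate from the dimension-zero result already proved.

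For the case $\dim T = 1$: for necessity, suppose $T = \widehat{A}$ where $A$ is an uncountable local UFD with a countable spectrum. Since $A$ is a UFD it is integrally closed, so by Theorem~\ref{thm:heitmann_UFD_necc}, $\widehat{A} = T$ is a field, a DVR, or has depth at least two. Since $\dim T = 1$, $T$ is not a field; and if $\depth T \geq 2$ then $\dim T \geq \depth T \geq 2$, a contradiction. Hence $T$ is a DVR. For sufficiency, suppose $T$ is a DVR. Then no integer is a zero divisor in $T$ (it is a domain), but $\depth T = 1$, so Theorem~\ref{thm:heitmann_UFD_suff} does not directly apply. Instead I would argue directly: a DVR $T$ has $T/M$ possibly uncountable or countable. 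If $T/M$ is uncountable, then $\abs{T} = \abs{T/M}$ is uncountable and we can take $A$ to be $T$ itself — $T$ is an uncountable local UFD (being a DVR) with a countable (two-element) spectrum. If $T/M$ is countable, I would instead produce an uncountable subring: apply the construction of Section~\ref{sec:construction} starting from a countable local domain $S$ with $\widehat{S} = T$ (such $S$ exists since a DVR satisfies the hypotheses of Theorem~\ref{prop:small19_countable_domain}), obtaining by Theorem~\ref{thm:B_completionT} an uncountable local domain $B$ with $S \subseteq B \subseteq T$, $\widehat{B} = T$, and all ideals of $B$ extended from $S$; then $\Spec(B)$ is countable by Proposition~\ref{prop:B_countable_spec}. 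The point is that since $\dim T = 1$, $\Spec(B)$ has only the height-zero and height-one primes, and $B$ sits between $S$ and its own completion $T$; one checks $B$ is integrally closed (being squeezed between a one-dimensional Noetherian domain and its DVR completion forces $B$ to be a DVR itself, or at least normal), hence a UFD since a one-dimensional normal Noetherian local domain is a DVR and therefore a UFD.

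The main obstacle I anticipate is the sufficiency direction when $\dim T = 1$ and $T/M$ is countable: one must verify that the uncountable domain $B$ produced by the general construction is actually a UFD, not merely a domain with a countable spectrum. The cleanest route is to show $B$ is one-dimensional, Noetherian, local, and integrally closed — equivalently a DVR — which would follow if $B = QF(B) \cap T$ is integrally closed in its fraction field; since $T$ is integrally closed (it is a DVR) and $B$ is defined (in Theorem~\ref{thm:existence_uncountable_idealprop_s*}) as $QF(A) \cap T$, any element of $QF(B) \subseteq QF(A)$ integral over $B$ is integral over $T$ hence lies in $T$, hence lies in $QF(A) \cap T = B$. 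Combined with $\dim B = 1$ (from the poset isomorphism $\Spec(B) \cong \Spec(S)$ and $\dim S = \dim T = 1$), this makes $B$ a DVR, which is a UFD. So the argument reduces to this normality check, which the construction of $B$ as an intersection with $T$ makes essentially automatic.
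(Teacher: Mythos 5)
Your argument is correct, and the dimension-zero case and the necessity direction in dimension one coincide with the paper's proof (Theorem~\ref{thm:heitmann_UFD_necc} plus $\depth T\leq\dim T$). Where you diverge is the sufficiency direction for $\dim T=1$: the paper's proof is a one-liner, because Lemma~\ref{lem:t_mod_p_uncountable} (applied with $P=(0)$, which is nonmaximal since $\dim T=1$) shows that \emph{any} complete local ring of dimension at least one has cardinality at least $c$; so if $T$ is a DVR, then $T$ itself is already an uncountable local UFD with a two-element spectrum and $\widehat{T}=T$, with no case split on $T/M$ and no appeal to the Section~\ref{sec:construction} machinery. Your route still works: in the uncountable-residue-field case only the uncountability of $T$ is needed (your claim $\abs{T}=\abs{T/M}$ is unjustified as stated but also unnecessary, since $T$ surjects onto $T/M$), and in the countable-residue-field case your normality argument for $B$ is sound, though it leans on the internal description $B=QF(A)\cap T$ from the proof of Theorem~\ref{thm:existence_uncountable_idealprop_s*} rather than on the statement of Theorem~\ref{thm:B_completionT}; a cleaner way to close that branch would be to note that $\widehat{B}=T$ is regular of dimension one, hence $B$ is a one-dimensional regular local ring, i.e.\ a DVR and so a UFD. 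In short, what the paper's approach buys is brevity and uniformity (no dependence on $T/M$ at all), while your construction-based branch, though valid, reproves a special case of something the dundon lemma gives for free.
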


\begin{proof}
First suppose $\dim T=0$ and $T$ is the completion of an uncountable local UFD with a countable spectrum. By Theorem \ref{thm:dim2+_precompletion_domain}, $T$ is an uncountable field.
Now suppose $T$ is an uncountable field. Then, $T$ is an uncountable local UFD with one prime ideal, and $\widehat{T}=T$. Therefore, $T$ is indeed the completion of an uncountable local UFD with a countable spectrum.

Next, suppose $\dim T=1$ and $T$ is the completion of an uncountable local UFD with a countable spectrum $A$. Since $A$ is integrally closed, by Theorem~\ref{thm:heitmann_UFD_necc}, it must be that $\widehat{A}=T$ is a DVR (as it cannot be a field or have depth at least $2$). Now suppose that $T$ is a DVR. Then, $T$ itself is uncountable by Lemma~\ref{lem:t_mod_p_uncountable}. Since a DVR is also a UFD, $T$ is also a UFD. Finally, since $\widehat{T}=T$  and $T$ has two prime ideals, we have that $T$ is the completion of an uncountable UFD with a countable spectrum.
\end{proof}

In order to show that the rings we construct are UFDs, we make use of the following characterization of UFDs for Noetherian domains.

\begin{theorem}[\cite{matsumura}, Theorem 20.1]\label{thm:ufd_principal_char}
If $A$ is a Noetherian integral domain, then $A$ is a UFD if and only if every height $1$ prime ideal is principal.
\end{theorem}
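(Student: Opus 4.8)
The plan is to establish both implications by standard arguments, with the real content being Krull's principal ideal theorem together with the classical criterion that a Noetherian domain is a UFD precisely when every irreducible element is prime.

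For the forward direction, I would assume $A$ is a UFD and let $P$ be a prime ideal with $\height P = 1$. Pick a nonzero element $x \in P$ and write $x = u\pi_1\cdots\pi_n$ as a product of a unit and irreducibles; since $P$ is prime, some $\pi_i \in P$. Because irreducibles are prime in a UFD, $(\pi_i)$ is a nonzero prime ideal contained in $P$, and as $\height P = 1$ this forces $(\pi_i) = P$, so $P$ is principal.

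For the converse, I would first note that $A$ being Noetherian gives the ascending chain condition on principal ideals, hence every nonzero nonunit admits a factorization into irreducibles; so it suffices to show that every irreducible $a \in A$ is a prime element. Given such an $a$, choose a prime ideal $P$ minimal over $(a)$. By Krull's principal ideal theorem $\height P \le 1$, and since $a \ne 0$ and $A$ is a domain we have $P \ne (0)$, so $\height P = 1$. By hypothesis $P = (p)$ for some $p \in A$, and $a \in (p)$ gives $a = pb$; irreducibility of $a$ together with $p$ being a nonunit forces $b$ to be a unit, whence $(a) = (p) = P$ is prime. Thus $a$ is a prime element and $A$ is a UFD.

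The only genuine obstacle is invoking Krull's principal ideal theorem to control $\height P$ for $P$ minimal over a principal ideal, plus correctly citing the lemma that the ascending chain condition on principal ideals yields existence of factorizations; everything else is bookkeeping, though one must be careful in the forward direction to use ``irreducible $\Rightarrow$ prime'' (valid in any UFD) and in the converse to check that the chosen minimal prime has height exactly one rather than zero.
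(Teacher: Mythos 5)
Your argument is correct and is the standard proof of this classical result: Krull's principal ideal theorem plus the criterion ``factorizations exist and every irreducible is prime'' for the converse, and ``irreducible implies prime'' in a UFD for the forward direction. Note that the paper itself offers no proof here --- it simply cites the statement from Matsumura (Theorem 20.1) --- so there is no authorial argument to compare against; your write-up fills in essentially the same textbook reasoning that the citation points to.
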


We now use our construction from Section~\ref{sec:construction} to identify sufficient conditions in the case that the dimension of the rings in question are at least two. 

\begin{proposition}\label{prop:suff_ufd_dim_geq2}
Suppose $(T,M)$ is a complete local ring with $\dim T\geq 2$ and suppose $T$ satisfies the following conditions:
\begin{enumerate}
    \item no integer of $T$ is a zero divisor,
    \item the depth of $T$ is at least $2$, and
    \item $T/M$ is countable.
\end{enumerate}
Then $T$ is the completion of an uncountable local UFD with a countable spectrum.
\end{proposition}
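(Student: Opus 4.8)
The plan is to follow the same template used for the domain and excellent cases: produce a countable local UFD whose completion is $T$, inflate it to an uncountable local domain $B$ via Theorem~\ref{thm:B_completionT}, and then transfer the UFD property from the countable ring to $B$ through the poset isomorphism of Proposition~\ref{prop:specb_specs_isomorphic}.

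First I would invoke Theorem~\ref{thm:heitmann_UFD_suff}. Since no integer of $T$ is a zero divisor and $\depth T\geq 2$, there is a local UFD with completion (isomorphic to) $T$ and of cardinality $\sup(\aleph_0,\abs{T/M})$; because $T/M$ is countable by hypothesis~(3), this ring is countable. Call it $(S,S\cap M)$ and identify it with its image in $T$, so $S\subseteq T$ and $\widehat{S}=T$. Now apply Theorem~\ref{thm:B_completionT} (valid since $\dim T\geq 2\geq 1$): there is an uncountable local domain $(B,B\cap M)$ with $S\subseteq B\subseteq T$, $\widehat{B}=T$, and every ideal of $B$ extended from $S$. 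By Proposition~\ref{prop:B_countable_spec}, $\Spec(B)$ is countable. Thus it remains only to show that $B$ is a UFD.

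For this I would use Theorem~\ref{thm:ufd_principal_char}: as $B$ is a Noetherian integral domain, it suffices to show that every height-one prime ideal of $B$ is principal. By Proposition~\ref{prop:specb_specs_isomorphic}, the map $\varphi\colon\Spec(B)\to\Spec(S)$ given by $P\mapsto P\cap S$ is an order isomorphism, and an order isomorphism of spectra preserves heights (it sends saturated chains to saturated chains of the same length in both directions). Let $P\in\Spec(B)$ have height one; then $\varphi(P)=P\cap S$ has height one in $S$, and since $S$ is a UFD, Theorem~\ref{thm:ufd_principal_char} gives $\varphi(P)=pS$ for some (prime) element $p\in S$. Since every ideal of $B$ is extended from $S$, write $P=(a_1,\dots,a_n)B$ with $a_i\in S$; each $a_i\in P\cap S=\varphi(P)$, so $P=(a_1,\dots,a_n)B\subseteq\varphi(P)B\subseteq P$, whence $P=\varphi(P)B=pB$ is principal. (Equivalently, this is Remark~\ref{rem:BS_same_generators}: $P$ and $\varphi(P)$ are generated by the same elements of $S$, and $\varphi(P)$ needs only a single generator.) Therefore $B$ is a UFD, and it is an uncountable local UFD with a countable spectrum whose completion is $T$.

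I do not expect a serious obstacle here; the one point to state carefully is that $\varphi$ carries height-one primes of $B$ to height-one primes of $S$, which is exactly the height-preservation remark recorded right after Proposition~\ref{prop:specb_specs_isomorphic}. The only other thing worth double-checking is that Theorem~\ref{thm:heitmann_UFD_suff} actually yields a countable ring under hypothesis~(3), which it does because $\sup(\aleph_0,\abs{T/M})=\aleph_0$ when $T/M$ is countable; note also that hypothesis $\dim T\geq 2$ is automatic from $\depth T\geq 2$, but is retained to match the statement.
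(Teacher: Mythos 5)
Your proposal is correct and follows essentially the same route as the paper: get a countable local UFD $S$ with $\widehat{S}=T$ from Theorem~\ref{thm:heitmann_UFD_suff}, build $B$ via Theorem~\ref{thm:B_completionT}, obtain countability of $\Spec(B)$ from Proposition~\ref{prop:B_countable_spec}, and verify the UFD property through Theorem~\ref{thm:ufd_principal_char} together with the order isomorphism and Remark~\ref{rem:BS_same_generators}. Your explicit argument that $P=\varphi(P)B$ is just an unwinding of that remark, so there is no substantive difference.
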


\begin{proof}
Since $T$ satisfies conditions (1), (2), and (3), we have by Theorem~\ref{thm:heitmann_UFD_suff} that $T$ is the completion of a countable local UFD. Let this countable local UFD be $(S,S\cap M)$. By Theorem~\ref{thm:B_completionT}, there exists an uncountable local domain $(B,B\cap M)$ such that $S\subseteq B\subseteq T$, $\widehat{B}=T$, and ideals of $B$ are extended from $S$. Finally, by Proposition~\ref{prop:B_countable_spec}, we have that $B$ has a countable spectrum. Thus, it remains to show that $B$ is a UFD.

Let $P\in\Spec(B)$ be a height one prime ideal of $B$. By Theorem~\ref{thm:ufd_principal_char}, it is enough to show that $P$ is a principal ideal. By Proposition~\ref{prop:specb_specs_isomorphic}, $P$ corresponds to a height one prime ideal $Q$ of $S$. Since $S$ is a Noetherian UFD, we have by Theorem~\ref{thm:ufd_principal_char} that $Q$ is a principal ideal, so $Q=xS$ for some $x\in S$. By Remark~\ref{rem:BS_same_generators}, we have that $P$ is generated by the same elements that generate $Q$, so $P=xB$. Thus $P$ is principal, and so $B$ is indeed a UFD.
\end{proof}

We are now ready to characterize completions of uncountable local UFDs with countable spectra.

\begin{theorem}\label{thm:char_ufd_completions_dim_geq2}
Suppose $(T,M)$ is a complete local ring.
\begin{itemize}
    \item If $\dim T=0$, then $T$ is the completion of an uncountable local UFD with a countable spectrum if and only if $T$ is an uncountable field.
    \item If $\dim T=1$, then $T$ is the completion of an uncountable local UFD with a countable spectrum if and only if $T$ is a DVR.
    \item If $\dim T\geq 2$, then $T$ is the completion of an uncountable local UFD with a countable spectrum if and only if
\begin{enumerate}
    \item no integer of $T$ is a zero divisor,
    \item the depth of $T$ is at least $2$, and
    \item $T/M$ is countable.
\end{enumerate}
\end{itemize}
\end{theorem}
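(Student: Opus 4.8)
The plan is to assemble this theorem from the pieces already in hand, splitting into the three dimension cases exactly as in the statement. The cases $\dim T = 0$ and $\dim T = 1$ are immediate: they are precisely Proposition~\ref{prop:char_ufd_dim0_1}, which has already been proved, so I would simply cite it. The entire substance lies in the case $\dim T \geq 2$, which I would prove by establishing the two implications separately.

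\medskip
\textbf{Sufficiency (the ``if'' direction) when $\dim T \geq 2$.} Suppose $T$ satisfies: no integer of $T$ is a zero divisor, $\depth T \geq 2$, and $T/M$ is countable. Then Proposition~\ref{prop:suff_ufd_dim_geq2} applies verbatim and yields an uncountable local UFD with a countable spectrum whose completion is $T$. So this direction is a one-line citation.

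\medskip
\textbf{Necessity (the ``only if'' direction) when $\dim T \geq 2$.} Suppose $T$ is the completion of an uncountable local UFD $A$ with a countable spectrum. Since $A$ is a UFD it is integrally closed, so Theorem~\ref{thm:heitmann_UFD_necc} tells us that no integer is a zero divisor in $\widehat{A} = T$, and that $T$ is a field, a DVR, or a ring of depth at least two. Because $\dim T = \dim A \geq 2$, $T$ is neither a field (dimension $0$) nor a DVR (dimension $1$), so $\depth T \geq 2$. This gives conditions (1) and (2). For condition (3), I would invoke Theorem~\ref{thm:small19_domain_char_countable_spec}: since $T$ (with $\dim T \geq 2$) is the completion of a local domain (namely $A$) with a countable spectrum, that theorem forces $T/M$ to be countable (along with re-deriving (1) and the stronger $M \notin \Ass(T)$, which is subsumed once $\depth T \geq 2$). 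Thus all three conditions hold.

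\medskip
I do not anticipate a genuine obstacle here, since every component is a previously established result in the excerpt; the only thing to be careful about is the bookkeeping between Theorem~\ref{thm:heitmann_UFD_necc} (which gives $\depth T \geq 2$, stronger than $M \notin \Ass(T)$) and Theorem~\ref{thm:small19_domain_char_countable_spec} (which is what delivers countability of $T/M$), making sure we cite each for exactly the conclusion it provides and note that $\depth T \geq 2$ implies $M \notin \Ass(T)$ so there is no redundancy conflict. The write-up is therefore essentially a matter of citing Proposition~\ref{prop:char_ufd_dim0_1}, Proposition~\ref{prop:suff_ufd_dim_geq2}, Theorem~\ref{thm:heitmann_UFD_necc}, and Theorem~\ref{thm:small19_domain_char_countable_spec} in the right order.
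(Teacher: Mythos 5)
Your proposal is correct and follows the paper's proof essentially verbatim: dimension zero and one by Proposition~\ref{prop:char_ufd_dim0_1}, sufficiency in dimension at least two by Proposition~\ref{prop:suff_ufd_dim_geq2}, and necessity by combining Theorem~\ref{thm:heitmann_UFD_necc} (ruling out the field and DVR cases via $\dim T \geq 2$ to get $\depth T \geq 2$) with Theorem~\ref{thm:small19_domain_char_countable_spec} for countability of $T/M$. The extra bookkeeping remarks you include are accurate but not needed beyond what the paper records.
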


\begin{proof}
If $\dim T = 0$ or $\dim T = 1$, the desired statement follows from Proposition~\ref{prop:char_ufd_dim0_1}.

If $\dim T\geq 2$, by Proposition~\ref{prop:suff_ufd_dim_geq2}, conditions (1), (2), and (3) are sufficient for $T$ to be the completion of an uncountable local UFD with a countable spectrum. Now suppose that $T$ is the completion of such a UFD. By Theorem~\ref{thm:heitmann_UFD_necc}, we have that no integer of $T$ is a zero divisor and the depth of $T$ is at least $2$. By Theorem~\ref{thm:small19_domain_char_countable_spec}, we have that $T/M$ is countable.
\end{proof}

\begin{example}
Surprisingly, the ring $T'=\Q[[x,y,z]]/(x^2)$ from Example~\ref{eg:uncountable_domain_completion} is not only the completion of an uncountable local domain with a countable spectrum, but also the completion of an uncountable local UFD with a countable spectrum by Theorem~\ref{thm:char_ufd_completions_dim_geq2}.
\end{example}

\subsection{Uncountable Noncatenary Domains and UFDs with Countable Spectra}\label{subsec:noncat_uncountable}
In this subsection, we characterize completions of uncountable noncatenary local domains with countable spectra, and completions of uncountable noncatenary local UFDs with countable spectra. 

First, we provide results from \cite{SMALL17} and \cite{paper1}, which identify necessary and sufficient conditions for a complete local ring to be the completion of a noncatenary local domain and necessary and sufficient conditions for a complete local ring to be the completion of a countable noncatenary local domain.

\begin{theorem}[\cite{SMALL17}, Theorem 2.10]\label{thm:small17noncatdoman_char}
Let $(T,M)$ be a complete local ring. Then $T$ is the completion of a noncatenary local domain if and only if the following conditions hold:
\begin{enumerate}
    \item no integer of $T$ is a zero divisor,
    \item $M\notin\Ass(T)$, and
    \item there exists $P\in\Min(T)$ such that $1<\dim(T/P)<\dim T$.
\end{enumerate}
\end{theorem}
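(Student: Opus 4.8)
The plan is to prove the two directions separately: necessity is a dimension-theoretic argument, while sufficiency rests on a subring construction of the kind underlying Section~\ref{sec:construction}.

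\emph{Necessity.} Suppose $T=\widehat A$ for a noncatenary local domain $A$, and set $n:=\dim T=\dim A$; note $n\geq 3$, since every local domain of dimension at most two is catenary. As $A$ is a domain and $M\neq(0)$, conditions (1) and (2) follow at once from Lech's theorem (Theorem~\ref{thm:lech_char_domain}). For (3) I would induct on $n$, using three facts: (i) a noncatenary local domain admits a \emph{saturated} chain $(0)=\mathfrak p_0\subsetneq\mathfrak p_1\subsetneq\cdots\subsetneq\mathfrak p_t=M\cap A$ of length $t$ with $2\leq t<n$ (splice a ``short'' saturated chain between two primes into a full one); (ii) $T$ is complete, hence universally catenary, so $\dim T/P'=1+\dim T/Q'$ whenever $P'\subsetneq Q'$ are primes of $T$ with nothing strictly between; and (iii) for $\mathfrak p\in\Spec A$ and $Q\in\Min(\mathfrak p T)$, faithful flatness gives $Q\cap A=\mathfrak p$ (Lemma~\ref{lem:SMALL19_minprime}) and, via the dimension formula for the flat local homomorphism $A_{\mathfrak p}\to T_Q$, $\height_T Q=\height_A\mathfrak p$. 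Apply this to $\mathfrak p_1$, which has $\height_A\mathfrak p_1=1$ and $1\leq\dim A/\mathfrak p_1\leq n-1$. If $\dim A/\mathfrak p_1\leq n-2$, choose $Q\in\Min(\mathfrak p_1T)$ maximizing $\dim T/Q$ (so $\dim T/Q=\dim A/\mathfrak p_1$ and $\height_T Q=1$) and then $P\in\Min T$ with $P\subsetneq Q$; then $\dim T/P=1+\dim A/\mathfrak p_1$ lies strictly between $1$ and $n$, giving (3). Otherwise $\dim A/\mathfrak p_1=n-1$, and $A/\mathfrak p_1$ is a noncatenary local domain of dimension $n-1$ with completion $T/\mathfrak p_1T$; the inductive hypothesis yields a minimal prime $P'/\mathfrak p_1T$ of $T/\mathfrak p_1T$ with $1<\dim T/P'<n-1$, and since $P'\in\Min(\mathfrak p_1T)$ has $\height_T P'=1$, choosing $P\in\Min T$ with $P\subsetneq P'$ gives $\dim T/P=1+\dim T/P'$, again strictly between $1$ and $n$. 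The base case $n=3$ forces $t=2$ and $\dim A/\mathfrak p_1=1=n-2$, which is the first case.

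\emph{Sufficiency.} Assume (1), (2), (3), and fix $P\in\Min T$ with $d:=\dim T/P$, $1<d<n:=\dim T$. First I would pick a target prime in $T$: since $T/P$ is a complete local domain of dimension $d\geq 2$ it has infinitely many height-one primes, and all but finitely many of these avoid $\Ass(T)$ as well as the (finitely many, for each remaining minimal prime $P'$ of $T$) height-one primes of $T/P$ containing the nonzero image of $P'$; pulling such a choice back gives a prime $Q^*$ of $T$ with $P\subsetneq Q^*$, $Q^*\notin\Ass T$, $\height_T Q^*=1$, and (by catenarity of $T$) $\dim T/Q^*=d-1$. The goal is then to build a local domain $A$ with $\widehat A=T$ and $Q^*$ \emph{extended} from $A$, i.e.\ $(Q^*\cap A)T=Q^*$. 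Granting this, put $\mathfrak q:=Q^*\cap A$; then $\mathfrak q T=Q^*$, so $\height_A\mathfrak q=\height_T Q^*=1$ and $\dim A/\mathfrak q=\dim T/Q^*=d-1$. Concatenating the saturated chain $(0)\subsetneq\mathfrak q$ with a maximal (hence saturated) chain from $\mathfrak q$ to $M\cap A$ produces a saturated chain of length $d<n=\dim A$, while a chain of length $n$ from $(0)$ to $M\cap A$ also exists; hence $A$ is a noncatenary local domain with $\widehat A=T$.

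To construct $A$, I would imitate the subring arguments underlying Section~\ref{sec:construction} (and Lech's theorem): start from a base subring of $T$ with completion $T$ — available from conditions (1), (2) — and build an ascending tower of quasi-local subrings of $T$, closing up so that the union $A$ satisfies (a) $A\cap\mathfrak P=(0)$ for every $\mathfrak P\in\Ass T$, so $A$ is a domain; (b) $IT\cap A=IA$ for every finitely generated ideal $I$ of $A$, so $A$ is Noetherian with $\widehat A=T$ by Proposition~\ref{cor:basering_complete_macine}; and (c) $(Q^*\cap A)T=Q^*$. I expect (c), carried out simultaneously with (a) and (b), to be the main obstacle: at each stage one must adjoin enough elements of $T$ to force, in the limit, that $Q^*$ is the unique prime of $T$ minimal over $(Q^*\cap A)T$ and that the extension is unmixed there, all while never adjoining an element that destroys the domain property or the ideal-contraction property. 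This is exactly the sort of formal-fiber bookkeeping already visible in Lemmas~\ref{lem:z_algo} and~\ref{lem:strict_containment}; once it is arranged, the verification that $A$ is noncatenary is the short dimension computation of the previous paragraph.
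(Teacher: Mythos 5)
This statement is not proved in the paper at all: it is quoted verbatim from \cite{SMALL17}, so the only comparison available is with that source. Your necessity direction is essentially correct. Conditions (1) and (2) do follow from Theorem~\ref{thm:lech_char_domain}, and your route to (3) --- noncatenarity of a local domain gives a saturated chain from $(0)$ to the maximal ideal of length $t<n$, then use $\height_T Q=\height_A\mathfrak{p}_1$ for $Q\in\Min(\mathfrak{p}_1T)$ (flatness plus Lemma~\ref{lem:SMALL19_minprime}) together with catenarity of the complete ring $T$, with an induction to handle the case $\dim A/\mathfrak{p}_1=n-1$ --- is sound, if somewhat more elaborate than the argument in \cite{SMALL17}, which extracts (3) directly from the characterization of noncatenarity by the existence of a nonmaximal prime $\mathfrak{p}$ with $\height\mathfrak{p}+\dim A/\mathfrak{p}<\dim A$.

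The sufficiency direction, however, has a genuine gap, and it sits exactly where the theorem's difficulty lies. Your reduction is fine: if one can produce a local domain $A$ with $\widehat{A}=T$ such that the chosen prime $Q^*$ (height one in $T$, $\dim T/Q^*=d-1$) satisfies $(Q^*\cap A)T=Q^*$, then the short dimension computation does show $A$ is noncatenary. But that construction is the entire content of this direction, and you do not give it; you explicitly defer it (``the main obstacle \dots once it is arranged''). The machinery you invoke cannot supply it: Lemmas~\ref{lem:z_algo} and~\ref{lem:strict_containment} and the rest of Section~\ref{sec:construction} start from a countable domain $S$ already known to have completion $T$ and adjoin carefully built units so that every element of the enlarged ring is a unit multiple of an element of $S$; they give no control whatsoever over the contraction $Q^*\cap A$ of a prime of $T$ fixed in advance, much less force $Q^*$ to be extended --- in that construction the prime structure of $B$ is entirely dictated by the black-box ring $S$. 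Producing a domain $A$ with completion $T$ in which a prescribed nonmaximal prime of $T$ has the required contraction/extension behavior is precisely the dedicated Heitmann-style transfinite construction carried out in \cite{SMALL17}, and without it your argument proves nothing in this direction. A smaller remark: demanding $(Q^*\cap A)T=Q^*$ exactly is also stronger than necessary; it suffices to arrange $Q^*\cap A\neq(0)$ (so $\height_A(Q^*\cap A)=1$ by going-down) together with $\dim T/(Q^*\cap A)T\leq\dim T-2$, and aiming at this weaker target is closer to what a formal-fiber construction can realistically deliver.
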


\begin{theorem}[\cite{paper1}, Theorem 4.5]\label{thm:countable_noncat_domain_char}
Let $(T,M)$ be a complete local ring. Then $T$ is the completion of a countable noncatenary local domain if and only if the following conditions hold:
\begin{enumerate}
    \item no integer of $T$ is a zero divisor,
    \item $M\notin\Ass(T)$,
    \item there exists $P\in\Min(T)$ such that $1<\dim(T/P)<\dim T$, and
    \item $T/M$ is countable.
\end{enumerate}
\end{theorem}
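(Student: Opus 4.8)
The plan is to prove the two directions separately: necessity is essentially free from the already-cited characterization of completions of (not necessarily countable) noncatenary local domains, while the sufficiency direction carries the real content and requires a construction.

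\textbf{Necessity.} Suppose $T=\widehat{A}$ for a countable noncatenary local domain $(A,A\cap M)$. Since $A$ is in particular a noncatenary local domain, conditions (1), (2), and (3) follow immediately from the necessity direction of Theorem~\ref{thm:small17noncatdoman_char}. For condition (4), note that $\widehat{A}=T$ forces $A/(A\cap M)\cong T/M$, so $T/M$ is a quotient of the countable ring $A$ and hence is countable.

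\textbf{Sufficiency.} Assume $T$ satisfies (1)--(4) and set $d\coloneqq\dim T$. By condition (3), fix $P\in\Min(T)$ with $1<\dim(T/P)=:n<d$; in particular $d\geq 3$. Since complete local rings are catenary, we may pick a prime $Q$ of $T$ with $P\subseteq Q$, $\dim(T/Q)=1$, and $\height(Q/P)=n-1$ (choose $Q$ one step below $M$ in a saturated chain of $T/P$). The goal is to build a countable local domain $(A,A\cap M)$ with $A\subseteq T$, $\widehat{A}=T$, and such that $\mathfrak{q}\coloneqq Q\cap A$ is a prime of $A$ with $\height_A\mathfrak{q}=n-1$ and $\dim(A/\mathfrak{q})=1$. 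Granting this, a saturated chain in $\Spec(A)$ from $(0)$ up to $\mathfrak{q}$ has length $n-1$; appending $\mathfrak{q}\subsetneq A\cap M$ produces a saturated, hence maximal, chain of length $n<d=\dim A$, so $A$ is noncatenary. To produce such an $A$, I would run the construction of \cite{SMALL17} (which yields a noncatenary local domain with completion $T$ whenever (1)--(3) hold) while tracking cardinalities as in \cite{SMALL19}: build a countable ascending union of countable local subrings of $T$, starting from a countable local subring whose completion is $T$, and at each stage (a) adjoin elements driving the maps toward ``onto $T/M^2$'' and toward $IT\cap R=IR$ for finitely generated ideals, so the limit is Noetherian with completion $T$ by Proposition~\ref{cor:basering_complete_macine}; (b) adjoin elements keeping the ring out of every associated prime of $T$ (possible by (1) and (2)), so the limit is a domain; and (c) control the generic formal fibers so that in the limit $Q\cap A$ has height exactly $n-1$ and $A/(Q\cap A)$ has $1$-dimensional completion $T/(Q\cap A)T$. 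Condition (4) keeps every subring in the chain countable, so the process has countable length and $A$ is countable --- this is exactly the extra hypothesis over Theorem~\ref{thm:small17noncatdoman_char} that upgrades ``noncatenary domain'' to ``countable noncatenary domain.''

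\textbf{Main obstacle.} The difficulty is step (c): pinning down $\height_A(Q\cap A)=n-1$ and $\dim(A/(Q\cap A))=1$ while simultaneously maintaining the Noetherian/completion and domain closure properties and staying within the countability budget. The fiber-control requirements, the avoidance of $\Ass(T)$, the $IT\cap R=IR$ conditions, and the surjectivity onto $T/M^2$ must be interleaved so that no step undoes another, and at each stage only countably many elements are available to do all of this at once; reconciling these is where the mutual compatibility of (1)--(4) is genuinely used.
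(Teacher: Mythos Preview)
This theorem is not proved in the paper at all: it is quoted verbatim from \cite{paper1} (Theorem 4.5 there) as an external input, and the present paper only \emph{uses} it in the proof of Proposition~\ref{prop:uncount_noncat_domain_suff}. So there is no ``paper's own proof'' to compare your proposal against.

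That said, your outline is headed in the right direction and matches the shape one expects the argument in \cite{paper1} to take. The necessity direction is correct as written. For sufficiency, your plan---rerun the \cite{SMALL17} construction while bookkeeping cardinalities via condition (4)---is the natural strategy, and your identification of the obstacle (simultaneously controlling the height of $Q\cap A$, the domain condition, the $IT\cap R=IR$ closure, and surjectivity onto $T/M^2$, all within a countable union) is accurate. Two places where your sketch is thin: first, the phrase ``starting from a countable local subring whose completion is $T$'' hides real work---you need (4) together with Proposition~\ref{prop:t/m^2_countable} to know such a base ring exists and that each adjunction step stays countable. Second, step (c) as you have written it is a wish list rather than an argument: you have not said \emph{how} to force $\height_A(Q\cap A)=n-1$ (the \cite{SMALL17} construction does this by carefully choosing which cosets to adjoin so that certain primes of $T$ all contract to the same prime of $A$), and without that mechanism the proposal is a plan, not a proof.
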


We use the latter result to identify sufficient conditions for a complete local ring to be the completion of an uncountable noncatenary local domain with a countable spectrum.

\begin{proposition}\label{prop:uncount_noncat_domain_suff}
Suppose $(T,M)$ is a complete local ring that satisfies the following conditions:
\begin{enumerate}
    \item no integer of $T$ is a zero divisor,
    \item $M\notin\Ass(T)$,
    \item there exists $P\in\Min(T)$ such that $1<\dim(T/P)<\dim T$, and
    \item $T/M$ is countable.
\end{enumerate}
Then $T$ is the completion of an uncountable noncatenary local domain with a countable spectrum.
\end{proposition}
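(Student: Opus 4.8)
The plan is to mirror the strategy already used for the domain, excellent, and UFD cases: start with a \emph{countable} noncatenary local domain $S$ whose completion is $T$, inflate it to an uncountable domain $B$ via the construction of Section~\ref{sec:construction}, and then transport the noncatenary property from $S$ to $B$ through the poset isomorphism of their spectra.

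First I would note that conditions (1)--(4) are precisely the hypotheses of Theorem~\ref{thm:countable_noncat_domain_char}, so $T$ is the completion of a countable noncatenary local domain $(S,S\cap M)$. Condition (3) forces $\dim T > \dim(T/P) > 1$, hence $\dim T \geq 3 \geq 1$, so the machinery of Section~\ref{sec:construction} is available: by Theorem~\ref{thm:B_completionT} there is an uncountable local domain $(B,B\cap M)$ with $S \subseteq B \subseteq T$, $\widehat{B}=T$, and every ideal of $B$ extended from $S$. Proposition~\ref{prop:B_countable_spec} then yields that $\Spec(B)$ is countable. Thus the only thing left is to check that $B$ is noncatenary.

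For this I would invoke Proposition~\ref{prop:specb_specs_isomorphic}: the map $\varphi\colon\Spec(B)\to\Spec(S)$, $P\mapsto P\cap S$, is an order isomorphism of posets. An order isomorphism carries the covering relation of one poset bijectively onto that of the other, so it sends any saturated chain of primes in $B$ to a saturated chain of the same length in $S$, and $\varphi^{-1}$ does the reverse. Since $S$ is noncatenary, there exist primes $Q_0\subsetneq Q_1$ of $S$ joined by two saturated chains of distinct lengths; applying $\varphi^{-1}$ produces primes $P_0\subsetneq P_1$ of $B$ joined by two saturated chains of those same distinct lengths. Hence $B$ is noncatenary, and $B$ is the desired uncountable noncatenary local domain with a countable spectrum.

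I do not anticipate a genuine obstacle; the one point deserving care is the claim that the order isomorphism of Proposition~\ref{prop:specb_specs_isomorphic} preserves saturated chains (equivalently, covering relations), which is immediate from the definition of order isomorphism recalled in the paragraph following that proposition. An alternative phrasing would work with heights---$\varphi$ preserves the height of every prime, and noncatenariness of $B$ is equivalent to the existence of a prime $P$ admitting a maximal chain below it of length strictly less than $\height P$---but tracking full saturated chains between a fixed pair of primes is the cleanest route and matches the bookkeeping already done in Remark~\ref{rem:BS_same_generators} and the preceding discussion.
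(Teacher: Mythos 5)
Your proposal is correct and follows essentially the same route as the paper: apply Theorem~\ref{thm:countable_noncat_domain_char} to obtain a countable noncatenary local domain $S$ with $\widehat{S}=T$, use Theorem~\ref{thm:B_completionT} and Proposition~\ref{prop:B_countable_spec} to get the uncountable domain $B$ with countable spectrum, and transfer noncatenarity via the order isomorphism of Proposition~\ref{prop:specb_specs_isomorphic}. The only cosmetic difference is that the paper just notes condition (3) gives $\dim T\geq 1$ (all that Theorem~\ref{thm:B_completionT} needs), while you observe the sharper bound $\dim T\geq 3$.
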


\begin{proof}
Since $T$ satisfies conditions (1), (2), (3), and (4), we have by Theorem~\ref{thm:countable_noncat_domain_char} that $T$ is the completion of a countable noncatenary local domain, $(S,S\cap M)$. Notice that condition (3) guarantees that $\dim T\geq 1$. Thus, by Theorem~\ref{thm:B_completionT}, there exists an uncountable local domain $(B,B\cap M)$ such that $S\subseteq B\subseteq T$, $\widehat{B}=T$, and ideals of $B$ are extended from $S$. Finally, by Proposition~\ref{prop:B_countable_spec}, we have that $B$ has a countable spectrum. Thus, it remains to show that $B$ is noncatenary.

Since $S$ is noncatenary, there exists a pair of prime ideals $P\subsetneq Q$ of $S$ such that two saturated chains of prime ideals between $P$ and $Q$ have different lengths. By Proposition~\ref{prop:specb_specs_isomorphic}, $\Spec(B)$ and $\Spec(S)$ are order isomorphic, and so there also exist two prime ideals $P'\subsetneq Q'$ of $B$ such that two saturated chains of prime ideals between $P'$ and $Q'$ have different lengths. Thus, $B$ is noncatenary as well.
\end{proof}

We now characterize completions of uncountable noncatenary local domains with countable spectra.

\begin{theorem}\label{thm:uncount_noncat_domain_char}
Suppose $(T,M)$ is a complete local ring. Then $T$ is the completion of an uncountable noncatenary local domain with a countable spectrum if and only if the following conditions are satisfied:
\begin{enumerate}
    \item no integer of $T$ is a zero divisor,
    \item $M\notin\Ass(T)$,
    \item there exists $P\in\Min(T)$ such that $1<\dim(T/P)<\dim T$, and
    \item $T/M$ is countable.
\end{enumerate}
\end{theorem}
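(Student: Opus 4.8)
The plan is to prove the theorem by combining the sufficiency already established in Proposition~\ref{prop:uncount_noncat_domain_suff} with a standard necessity argument that descends properties from the uncountable noncatenary domain to its completion. Since the four conditions are symmetric in form to those in Theorem~\ref{thm:countable_noncat_domain_char}, the key observation is that the noncatenary witness in the constructed domain forces condition (3) on $T$, while the remaining conditions are automatic from general completion theory.

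First I would handle the forward (sufficiency) direction in a single sentence: if $(T,M)$ satisfies conditions (1)--(4), then by Proposition~\ref{prop:uncount_noncat_domain_suff} the ring $T$ is the completion of an uncountable noncatenary local domain with a countable spectrum, so there is nothing more to do.

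Next I would turn to necessity. Suppose $T = \widehat{A}$ where $(A, A\cap M)$ is an uncountable noncatenary local domain with countable spectrum. Since $A$ is a domain, Theorem~\ref{thm:lech_char_domain} immediately gives conditions (1) and (2) (no integer of $T$ is a zero divisor, and $M\notin\Ass(T)$; note $M \neq (0)$ since $A$ being noncatenary forces $\dim A \geq 2$, hence $\dim T \geq 2$). For condition (4): since $A$ has a countable spectrum and $\dim A \geq 2$, Theorem~\ref{thm:small19_domain_char_countable_spec} applies and yields that $T/M$ is countable. The only remaining work is condition (3). Here I would invoke Theorem~\ref{thm:small17noncatdoman_char}: because $T$ is the completion of a noncatenary local domain (namely $A$), that theorem's conditions hold, and in particular its condition (3) is exactly our condition (3) — there exists $P\in\Min(T)$ with $1 < \dim(T/P) < \dim T$.

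The main obstacle, if any, is purely bookkeeping: one must be careful that the hypotheses of the cited theorems are genuinely met — in particular that $\dim T \geq 2$ so that Theorem~\ref{thm:small19_domain_char_countable_spec} is applicable, and that $M \neq (0)$ so that the clause ``unless equal to $(0)$'' in Theorem~\ref{thm:lech_char_domain} does not weaken the conclusion. Both follow from the elementary fact that a catenary-failing domain must have Krull dimension at least $2$ (a domain of dimension $\leq 1$ is trivially catenary), together with $\dim A = \dim \widehat{A}$. No genuinely new argument is needed; the theorem is essentially a corollary assembled from Proposition~\ref{prop:uncount_noncat_domain_suff}, Theorem~\ref{thm:lech_char_domain}, Theorem~\ref{thm:small17noncatdoman_char}, and Theorem~\ref{thm:small19_domain_char_countable_spec}.
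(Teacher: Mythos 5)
Your proposal is correct and takes essentially the same route as the paper: sufficiency via Proposition~\ref{prop:uncount_noncat_domain_suff}, and necessity by citing Theorem~\ref{thm:small17noncatdoman_char} for condition (3) and Theorem~\ref{thm:small19_domain_char_countable_spec} for condition (4). The only cosmetic differences are that you extract conditions (1)--(2) from Theorem~\ref{thm:lech_char_domain} rather than from Theorem~\ref{thm:small17noncatdoman_char} (which already supplies them), and you justify $\dim T\geq 2$ from noncatenarity of $A$ rather than reading it off condition (3) as the paper does; both are harmless.
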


\begin{proof}
If $T$ satisfies conditions (1), (2), (3), and (4), then, by Proposition~\ref{prop:uncount_noncat_domain_suff}, $T$ is the completion of an uncountable noncatenary local domain with a countable spectrum. If $T$ is the completion of such a ring, then, by Theorem~\ref{thm:small17noncatdoman_char}, $T$ must satisfy conditions (1), (2), and (3). Since condition (3) is satisfied, we have that $\dim T>2$, so, by Theorem~\ref{thm:small19_domain_char_countable_spec}, it must be that $T/M$ is countable and condition (4) is satisfied.
\end{proof}

We now use similar arguments as those in Propositions~\ref{prop:suff_ufd_dim_geq2} and \ref{prop:uncount_noncat_domain_suff} to identify sufficient conditions for a complete local ring to be the completion of an uncountable noncatenary local UFD with a countable spectrum. The following results from \cite{SMALL17} and \cite{paper1} characterize completions of noncatenary local UFDs and countable noncatenary local UFDs.

\begin{theorem}[\cite{SMALL17}, Theorem 3.7]\label{thm:small17noncatufd_char}
Let $(T,M)$ be a complete local ring. Then $T$ is the completion of a noncatenary local UFD if and only if the following conditions hold:
\begin{enumerate}
    \item no integer of $T$ is a zero divisor,
    \item $\depth(T)>1$, and
    \item there exists $P\in\Min(T)$ such that $2<\dim(T/P)<\dim T$.
\end{enumerate}
\end{theorem}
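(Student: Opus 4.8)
\medskip
\noindent\emph{Proof proposal.} This is an equivalence whose two halves have quite different character. Necessity is a short argument resting on Heitmann's structure theorem for completions of integrally closed domains (Theorem~\ref{thm:heitmann_UFD_necc}), the fact that every height-one prime of a UFD is principal (Theorem~\ref{thm:ufd_principal_char}), and the fact that the complete ring $T$ is catenary. Sufficiency is a Heitmann-style construction of a subring of $T$ with a carefully prescribed prime spectrum, and is the substantial part. (A complete proof is in \cite{SMALL17}; the following indicates its shape.)

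For necessity, let $T=\widehat{A}$ with $A$ a noncatenary local UFD. As $A$ is a UFD it is integrally closed, so Theorem~\ref{thm:heitmann_UFD_necc} gives that no integer of $T$ is a zero divisor and that $T$ is a field, a DVR, or a ring of depth at least $2$. Since every local ring of dimension at most $2$ is catenary, $\dim A\geq 3$, so $\dim T=\dim A\geq 3$; hence $T$ is neither a field nor a DVR, and $\depth(T)>1$, which is condition (2). For condition (3), observe that a catenary local domain satisfies $\height(\mathfrak{q})+\dim(A/\mathfrak{q})=\dim A$ at every prime (concatenate a saturated chain from $(0)$ to $\mathfrak{q}$ with one from $\mathfrak{q}$ to $M$), so the noncatenary domain $A$ has a prime $\mathfrak{q}$ where this formula fails; since the formula holds at $(0)$, at $M$, and at every height-one prime of $A$ (each is principal, and a quotient of a Noetherian local domain by a nonzero principal ideal drops dimension by exactly one), the witness $\mathfrak{q}$ satisfies $\height_A(\mathfrak{q})\geq 2$ and $\dim(A/\mathfrak{q})\geq 1$. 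Now transfer to $T$: since $A\to T$ is faithfully flat and local, Lemma~\ref{lem:SMALL19_minprime} together with the dimension formula for flat local homomorphisms yields a minimal prime $Q$ of $\mathfrak{q}T$ with $Q\cap A=\mathfrak{q}$, $\height_T(Q)=\height_A(\mathfrak{q})\geq 2$, and $\dim(T/Q)=\dim(A/\mathfrak{q})\geq 1$, so $\height_T(Q)+\dim(T/Q)<\dim T$. Choose $P\in\Min(T)$ with $P\subseteq Q$ and $\height(Q/P)=\height_T(Q)$. Because $T$ is complete, hence excellent, hence catenary, every saturated chain from $P$ to $M$ has length $\height(Q/P)+\dim(T/Q)$, which is therefore equal to $\dim(T/P)$; thus $2<\height_T(Q)+\dim(T/Q)=\dim(T/P)<\dim T$, which is condition (3).

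For sufficiency, assume $T$ satisfies (1)--(3); write $n=\dim T$ and $d=\dim(T/P)$ for a minimal prime $P$ with $2<d<n$, and fix a saturated chain $P=R_{0}\subsetneq R_{1}\subsetneq\cdots\subsetneq R_{d}=M$ in $T$. I would build $A$ as an increasing union of countable subrings of $T$ in the manner of Heitmann (cf.\ the construction behind Theorem~\ref{thm:heitmann_UFD_suff} and Section~\ref{sec:construction}), arranging that in the limit: $A$ maps onto $T/M^{2}$ and $IT\cap A=IA$ for every finitely generated ideal $I$, so that $\widehat{A}=T$ by the recognition criterion of \cite{heitmann}; every height-one prime of $A$ is principal, so that $A$ is a UFD by Theorem~\ref{thm:ufd_principal_char}; the prime $P$ is kept in the generic formal fiber, i.e.\ $P\cap A=(0)$; and each $R_{i}\cap A$ is forced to be strictly smaller than $R_{i+1}\cap A$ with no prime of $A$ strictly between them, so that $(0)=R_{0}\cap A\subsetneq\cdots\subsetneq R_{d}\cap A=M$ is a saturated chain of length $d$ in $A$. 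Since $\dim A=\dim T=n>d$, the domain $A$ also carries a saturated chain from $(0)$ to $M$ of length $n$, so $A$ is a noncatenary local UFD with $\widehat{A}=T$. The hypothesis $d>2$ is exactly what makes this consistent: the first step $(0)\subsetneq R_{1}\cap A$ is a height-one prime of the UFD $A$, hence principal with quotient of dimension $n-1\geq 2$, which forces the tail $R_{1}\cap A\subsetneq\cdots\subsetneq M$ of the short chain to have length at least $2$ inside the $(n-1)$-dimensional local domain $A/(R_{1}\cap A)$; thus $d\geq 3$ (and $n\geq 4$).

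The main obstacle is the sufficiency construction: one must run a single transfinite amalgamation that simultaneously keeps the ring a domain, drives it to a UFD by making every nascent height-one prime principal without leaving $T$, prevents the chosen minimal prime $P$ of $T$ from contracting into the ring, and freezes the chosen chain of $T$-primes so that its contraction stays saturated in $A$ — all while preserving the completion. Coordinating the UFD-ification with the noncatenarity requirement is the crux, and it is precisely here that the extra slack guaranteed by $\dim(T/P)>2$ (one more than in the plain-domain case, Theorem~\ref{thm:small17noncatdoman_char}) is consumed.
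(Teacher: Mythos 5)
First, note that the paper you are working from does not prove this statement at all: it is quoted verbatim from \cite{SMALL17} (Theorem 3.7) and used as a black box, so there is no internal proof to compare against; your proposal has to stand on its own, and as written it has two genuine gaps.

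In the necessity half, the pivotal step is the claim that a noncatenary local UFD $A$ must contain a prime $\mathfrak{q}$ with $\height(\mathfrak{q})+\dim(A/\mathfrak{q})<\dim A$. You justify this by proving that a \emph{catenary} local domain satisfies $\height(\mathfrak{q})+\dim(A/\mathfrak{q})=\dim A$ at every prime and then asserting that a noncatenary one therefore has a prime where the formula fails. That is the converse of what your concatenation argument establishes: what you need is ``the formula holds at every prime $\Rightarrow$ catenary'' (equivalently, noncatenary $\Rightarrow$ the formula fails somewhere), and this implication does not follow from the easy direction you proved; it requires its own argument (it is supplied as a separate lemma in \cite{SMALL17}) and your attempt simply assumes it. A smaller but real gap in the same paragraph: not every $Q\in\Min(\mathfrak{q}T)$ satisfies $\dim(T/Q)=\dim(A/\mathfrak{q})$; flatness gives $\height(Q)=\height(\mathfrak{q})$ for all of them, but you must choose a $Q$ of maximal coheight (using $\dim(T/\mathfrak{q}T)=\dim(A/\mathfrak{q})$) to get both equalities at once, and this choice is not made explicit.

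The sufficiency half is the substantial part of the theorem, and your proposal does not prove it: it is a list of properties you would like a transfinite Heitmann-style union of subrings of $T$ to have (completion equal to $T$, every height-one prime principal, $P\cap A=(0)$, and the contractions $R_i\cap A$ forming a saturated chain of length $d<\dim T$), with no construction and no verification that these constraints can be met simultaneously. Coordinating the adjunctions that force height-one primes to be principal with the requirement that the chain $R_0\subsetneq\cdots\subsetneq R_d$ contract to a \emph{saturated} chain of the same length, while keeping $IT\cap A=IA$ for finitely generated $I$, is exactly the technical content of \cite{SMALL17}, and ``arranging that'' it happens is not an argument. Even the heuristic at the end (that $R_1\cap A$ is a height-one, hence principal, prime of $A$) presupposes facts about the contracted chain that the missing construction would have to deliver. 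So the proposal correctly identifies the ingredients (Theorems~\ref{thm:heitmann_UFD_necc} and \ref{thm:ufd_principal_char}, catenarity of $T$, a Heitmann-type construction) but, as a proof, both halves are incomplete: necessity by a converse-for-implication slip at its key step, sufficiency because the construction is only described, not carried out.
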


\begin{theorem}[\cite{paper1}, Theorem 4.10]\label{thm:noncat_ufd_char}
Let $(T,M)$ be a complete local ring. Then $T$ is the completion of a countable noncatenary local UFD if and only if the following conditions hold:
\begin{enumerate}
    \item no integer of $T$ is a zero divisor,
    \item $\depth T>1$,
    \item there exists $P\in\Min(T)$ such that $2<\dim(T/P)<\dim T$, and
    \item $T/M$ is countable.
\end{enumerate}
\end{theorem}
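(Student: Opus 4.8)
The statement is a biconditional, and I would prove the two directions separately; the forward (necessity) direction is short, while the reverse (sufficiency) direction is a construction. For necessity, suppose $A$ is a countable noncatenary local UFD with $\widehat{A}=T$. Then $A$ is in particular a noncatenary local UFD, so Theorem~\ref{thm:small17noncatufd_char} yields conditions (1), (2), and (3) at once. Condition (4) is immediate: completing does not change the residue field, so $T/M\cong A/(A\cap M)$ is a quotient of the countable ring $A$ and hence countable.

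For sufficiency, assume (1)--(4) and build the required domain $A$ as an ascending union $A=\bigcup_n A_n$ of countable quasi-local subrings of $T$, starting from the prime subring of $T$ localized at its contraction of $M$ and adjoining one element of $T$ at each stage to dispatch one item from a countable list of tasks. The construction should merge the ideas behind the two relevant prior results: the construction underlying Theorem~\ref{thm:heitmann_UFD_suff}, which produces a countable UFD whose completion is $T$ (using $\depth T>1$ to keep every height-one prime principal, which suffices for the UFD property by Theorem~\ref{thm:ufd_principal_char}), and the construction underlying Theorem~\ref{thm:small17noncatufd_char}, which forces the domain to carry two saturated chains of primes of different lengths between $(0)$ and $A\cap M$ by controlling which primes of $T$ contract into $A$ --- here the distinguished minimal prime $P$ with $2<\dim(T/P)<\dim T$ supplies the ``short'' chain. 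Along the way one maintains $IT\cap A=IA$ for every finitely generated ideal $I$, so that the Heitmann-type criterion (Proposition~\ref{cor:basering_complete_macine}) certifies that $A$ is Noetherian with $\widehat{A}=T$ and that $A\to T/M^2$ is onto. Countability is preserved throughout because each adjunction adds only countably many elements, $T/M$ countable forces $T/M^2$ countable by Proposition~\ref{prop:t/m^2_countable}, and only countably many primes of $T$ (those in the relevant formal fibers) ever need to be addressed.

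The main obstacle is the reverse direction's simultaneous bookkeeping: keeping $\widehat{A}=T$, keeping $A$ a UFD, keeping $A$ countable, and keeping $A$ noncatenary, all inside one countable induction. The UFD requirement is the delicate one, since each element adjoined to kill an unwanted prime in a formal fiber or to realize the noncatenary chain risks creating a new non-principal height-one prime; each such move must be followed by a ``repair'' step in the spirit of Theorem~\ref{thm:heitmann_UFD_suff}, and one must verify that the repair disturbs neither the chain structure nor the ideal-contraction conditions. This is also where the hypothesis $2<\dim(T/P)$ --- strictly stronger than the bound $1<\dim(T/P)$ appearing in the non-UFD case, Theorem~\ref{thm:small17noncatdoman_char} --- gets used: the extra unit of dimension is the margin needed so that, after principalizing the height-one primes, the short chain coming from $P$ remains strictly shorter than a maximal chain of $A$, leaving $A$ genuinely noncatenary.
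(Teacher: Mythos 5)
First, a structural point: this paper does not prove the statement at all --- it is quoted from \cite{paper1}, Theorem 4.10 --- so the comparison is with the proof in that reference. Your necessity direction is correct and is the expected argument: Theorem~\ref{thm:small17noncatufd_char} gives conditions (1)--(3), and $T/M\cong A/(A\cap M)$ is countable because $A$ is.

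The sufficiency direction, however, is a plan rather than a proof, and that is a genuine gap. All of the nontrivial content --- producing one countable subring of $T$ that simultaneously has completion $T$, has every height-one prime principal, and is noncatenary with the short chain arising from the given $P\in\Min(T)$ --- is delegated to an unspecified ``merge'' of the constructions behind Theorem~\ref{thm:heitmann_UFD_suff} and Theorem~\ref{thm:small17noncatufd_char}, plus ``repair steps'' whose existence and compatibility are precisely what would need to be proved. Neither prior construction works by adjoining an element and then repairing non-principal height-one primes; the Heitmann-style UFD constructions maintain the relevant property at every stage by restricting which elements may be adjoined, and in the noncatenary UFD construction of \cite{SMALL17} the noncatenarity witness is not a height-one prime (those are principal, and their contractions are rigid) but a prime of larger height, which is exactly where the hypothesis $2<\dim(T/P)$ --- as opposed to $1<\dim(T/P)$ in Theorem~\ref{thm:small17noncatdoman_char} --- enters; your heuristic about an ``extra unit of dimension'' does not identify this mechanism, and nothing in the sketch verifies that two saturated chains of different lengths actually result. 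There are also two technical slips: starting from the prime subring you cannot invoke Proposition~\ref{cor:basering_complete_macine}, which presupposes a local subring $R\subseteq A$ with $\widehat{R}=T$; the applicable criterion is Proposition 1 of \cite{heitmann}, and there the surjectivity of $A\to T/M^2$ is a hypothesis you must arrange (using countability of $T/M^2$ via Proposition~\ref{prop:t/m^2_countable}), not a conclusion. The actual proof in \cite{paper1} establishes sufficiency by carrying out the noncatenary-UFD construction of \cite{SMALL17} with cardinality control, showing each step can be performed with countable rings when $T/M$ is countable; a complete argument needs that construction (or a citation of it), not an assertion that the two machines can be combined.
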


We use Theorem~\ref{thm:noncat_ufd_char} to identify sufficient conditions on a complete local ring to be the completion of an uncountable noncatenary local UFD with a countable spectrum.

\begin{proposition}\label{prop:uncount_noncat_ufd_suff}
Suppose $(T,M)$ is a complete local ring that satisfies the following conditions:
\begin{enumerate}
    \item no integer of $T$ is a zero divisor,
    \item $\depth T>1$,
    \item there exists $P\in\Min(T)$ such that $2<\dim(T/P)<\dim T$, and
    \item $T/M$ is countable.
\end{enumerate}
Then $T$ is the completion of an uncountable noncatenary local UFD with a countable spectrum.
\end{proposition}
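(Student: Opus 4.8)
The plan is to follow the same template used in Propositions~\ref{prop:suff_ufd_dim_geq2} and \ref{prop:uncount_noncat_domain_suff}: produce a countable noncatenary local UFD with completion $T$, build the uncountable domain $B$ from it via Theorem~\ref{thm:B_completionT}, and then transfer both the UFD property and the noncatenary property through the spectral order isomorphism of Proposition~\ref{prop:specb_specs_isomorphic}.

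First I would invoke Theorem~\ref{thm:noncat_ufd_char}: since $T$ satisfies conditions (1)--(4), $T$ is the completion of a countable noncatenary local UFD $(S,S\cap M)$. Condition (3) guarantees in particular that $\dim T\geq 1$ (indeed $\dim T>3$), so Theorem~\ref{thm:B_completionT} applies and yields an uncountable local domain $(B,B\cap M)$ with $S\subseteq B\subseteq T$, $\widehat{B}=T$, and every ideal of $B$ extended from $S$. Proposition~\ref{prop:B_countable_spec} then gives that $\Spec(B)$ is countable. It remains to check that $B$ is a noncatenary UFD.

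For the UFD property, I would use Theorem~\ref{thm:ufd_principal_char}: it suffices to show every height one prime $P$ of $B$ is principal. By Proposition~\ref{prop:specb_specs_isomorphic}, the map $\varphi:\Spec(B)\to\Spec(S)$ given by $P\mapsto P\cap S$ is an order isomorphism, so $\varphi(P)=P\cap S$ is a height one prime of $S$; since $S$ is a Noetherian UFD, $\varphi(P)=xS$ for some $x\in S$ by Theorem~\ref{thm:ufd_principal_char}, and by Remark~\ref{rem:BS_same_generators} the prime $P$ is generated by the same element, so $P=xB$ is principal. Hence $B$ is a UFD. For noncatenarity, since $S$ is noncatenary there is a pair $P\subsetneq Q$ in $\Spec(S)$ admitting two saturated chains of prime ideals of different lengths; the order isomorphism $\varphi$ carries this to a pair $P'\subsetneq Q'$ in $\Spec(B)$ with saturated chains of prime ideals of the same two distinct lengths between them, so $B$ is noncatenary.

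I do not anticipate a genuine obstacle here: all the substantive work is already packaged in Theorem~\ref{thm:B_completionT}, Proposition~\ref{prop:specb_specs_isomorphic}, Remark~\ref{rem:BS_same_generators}, and Theorem~\ref{thm:noncat_ufd_char}. The only point requiring care is that the transfer of the ``noncatenary UFD'' property to $B$ uses both that $\Spec(B)\cong\Spec(S)$ as posets (for the chain-length discrepancy) and that corresponding height one primes have the same generators in $S$ (for principality) — and both facts are immediate from the results already established.
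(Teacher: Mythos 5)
Your proposal is correct and follows essentially the same route as the paper's proof: obtain a countable noncatenary local UFD $S$ via Theorem~\ref{thm:noncat_ufd_char}, build $B$ with Theorem~\ref{thm:B_completionT} and Proposition~\ref{prop:B_countable_spec}, and transfer the UFD property (via Theorem~\ref{thm:ufd_principal_char} and Remark~\ref{rem:BS_same_generators}) and noncatenarity through the order isomorphism of Proposition~\ref{prop:specb_specs_isomorphic}. No gaps.
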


\begin{proof}
Since $T$ satisfies conditions (1), (2), (3), and (4), we have by Theorem~\ref{thm:noncat_ufd_char} that $T$ is the completion of a countable noncatenary local UFD, $(S,S\cap M)$. Notice that the conditions of Theorem~\ref{thm:B_completionT} are met, since $\dim T\geq 1$ by condition (3). Thus, there exists an uncountable local domain $(B,B\cap M)$ such that $S\subseteq B\subseteq T$, $\widehat{B}=T$, and ideals of $B$ are extended from $S$. Finally, by Proposition~\ref{prop:B_countable_spec}, we have that $B$ has a countable spectrum. Thus, it remains to show that $B$ is a noncatenary UFD.

By Proposition~\ref{prop:specb_specs_isomorphic}, $\Spec(B)$ and $\Spec(S)$ are order isomorphic. If $P\in\Spec(B)$ is a height one prime ideal, then it corresponds to a height one prime ideal of $S$, which is principal since $S$ is a Noetherian UFD. By Remark~\ref{rem:BS_same_generators}, this principal ideal of $S$ and $P$ are generated by the same elements of $S$, so $P$ is also principal. Thus, $B$ is a UFD. Similarly, since $S$ is noncatenary, there exist prime ideals $P\subsetneq Q$ of $S$ such that two saturated chains of prime ideals between $P$ and $Q$ have different lengths. Since $\Spec(B)$ and $\Spec(S)$ are order isomorphic, there exist corresponding saturated chains of different lengths between two prime ideals of $B$. Thus, $B$ is noncatenary.
\end{proof}

We now characterize completions of uncountable noncatenary local UFDs with countable spectra.

\begin{theorem}\label{thm:uncount_noncat_ufd_char}
Suppose $(T,M)$ is a complete local ring. Then $T$ is the completion of an uncountable noncatenary local UFD with a countable spectrum if and only if the following conditions are satisfied:
\begin{enumerate}
    \item no integer of $T$ is a zero divisor,
    \item $\depth T>1$,
    \item there exists $P\in\Min(T)$ such that $2<\dim(T/P)<\dim T$, and
    \item $T/M$ is countable.
\end{enumerate}
\end{theorem}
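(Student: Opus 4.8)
The plan is to follow exactly the template established for the noncatenary domain case in Theorem~\ref{thm:uncount_noncat_domain_char}, since the two halves of the equivalence have already been isolated into the cited results. For the \emph{sufficiency} direction, suppose $(T,M)$ satisfies conditions (1)--(4). Then Proposition~\ref{prop:uncount_noncat_ufd_suff} applies verbatim and produces an uncountable noncatenary local UFD whose completion is $T$ and whose spectrum is countable, so there is nothing further to do.

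For the \emph{necessity} direction, suppose $T$ is the completion of an uncountable noncatenary local UFD $(A,A\cap M)$ with a countable spectrum. Since $A$ is in particular a noncatenary local UFD with $\widehat A = T$, Theorem~\ref{thm:small17noncatufd_char} forces conditions (1), (2), and (3): no integer of $T$ is a zero divisor, $\depth T > 1$, and there is $P\in\Min(T)$ with $2 < \dim(T/P) < \dim T$. It then remains only to extract condition (4). Here I would observe that condition (3), which we have just verified, forces $\dim T > \dim(T/P) > 2$, and in particular $\dim T \geq 2$. Since $A$ is a local domain with a countable spectrum and $\widehat A = T$, Theorem~\ref{thm:small19_domain_char_countable_spec} applies and gives that $T/M$ is countable, which is condition (4). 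This completes the proof.

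I do not expect a genuine obstacle in assembling this argument: all of the substantive work --- passing from a countable noncatenary local UFD to an uncountable one with an order-isomorphic (hence countable) spectrum while preserving both the completion and the UFD property --- has already been carried out in Proposition~\ref{prop:uncount_noncat_ufd_suff} by way of Theorem~\ref{thm:B_completionT}, Proposition~\ref{prop:specb_specs_isomorphic}, Remark~\ref{rem:BS_same_generators}, and Theorem~\ref{thm:ufd_principal_char}. The only point requiring a moment's care is verifying that the dimension bound coming from condition (3) is large enough to invoke Theorem~\ref{thm:small19_domain_char_countable_spec} on the necessity side, and this is immediate. In short, the ``hard part'' is purely bookkeeping: quoting each cited characterization --- Theorem~\ref{thm:small17noncatufd_char} for the UFD structure and Theorem~\ref{thm:small19_domain_char_countable_spec} for the countable spectrum --- with exactly the hypotheses it requires.
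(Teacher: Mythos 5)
Your proof is correct and follows exactly the paper's own argument: sufficiency via Proposition~\ref{prop:uncount_noncat_ufd_suff}, and necessity by combining Theorem~\ref{thm:small17noncatufd_char} for conditions (1)--(3) with Theorem~\ref{thm:small19_domain_char_countable_spec} for condition (4), noting that condition (3) guarantees the dimension hypothesis needed (the paper records $\dim T>3$, which in particular gives $\dim T\geq 2$). No gaps.
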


\begin{proof}
If $T$ satisfies conditions (1), (2), (3), and (4), then, by Proposition~\ref{prop:uncount_noncat_ufd_suff}, $T$ is the completion of an uncountable noncatenary local UFD with a countable spectrum. If $T$ is the completion of such a ring, then, by Theorem~\ref{thm:small17noncatufd_char}, $T$ must satisfy conditions (1), (2), and (3). Since condition (3) is satisfied, $\dim T>3$, so, by Theorem~\ref{thm:small19_domain_char_countable_spec}, we have that $T/M$ is countable and condition (4) must be satisfied as well.
\end{proof}

\begin{example}
By Theorem~\ref{thm:uncount_noncat_domain_char}, the ring $\Q[[x,y,z,w]]/(x)\cap(y,z)$ is the completion of an uncountable noncatenary local domain with a countable spectrum.
Similarly, by Theorem~\ref{thm:uncount_noncat_ufd_char}, the ring $\Q[[x,y_1,y_2,z_1,z_2]]/(x)\cap(y_1,y_2)$ is the completion of an uncountable noncatenary local UFD with a countable spectrum.
\end{example}

\section*{Acknowledgments}
We thank the Clare Boothe Luce Scholarship Program for supporting the research of the second author.

\begin{bibdiv}
\begin{biblist}

\bib{SMALL17}{article}{
   author={Avery, Chloe I.},
   author={Booms, Caitlyn},
   author={Kostolansky, Timothy M.},
   author={Loepp, S.},
   author={Semendinger, Alex},
   title={Characterization of completions of noncatenary local domains and
   noncatenary local UFDs},
   journal={J. Algebra},
   volume={524},
   date={2019},
   pages={1--18},
   issn={0021-8693},
   review={\MR{3902351}},
   doi={10.1016/j.jalgebra.2018.12.016},
}

\bib{SMALL19}{article}{
    author = {Barrett, Erica},
    author = {Graf, Emil} ,
    author = {Loepp, S.},
    author = {Strong, Kimball},
    author = {Zhang, Sharon},
    title = {Cardinalities of Prime Spectra of Precompletions},
    year = {2019},
    eprint={arXiv:1911.06648},
}

\bib{SMALL19.1}{article}{
     author = {Barrett, Erica},
    author = {Graf, Emil} ,
    author = {Loepp, S.},
    author = {Strong, Kimball},
    author = {Zhang, Sharon},
    title = {Structure of Spectra of Precompletions},
    year = {2019},
    eprint={arXiv:1911.01394},
}

\bib{colbert}{article}{
    author={Colbert, Cory},
    title={Enlarging localized polynomial rings while preserving their prime ideal structure},
    year = {2016},
    journal = {\textnormal{to appear in} J. Algebra},
}

\bib{dundon}{article}{
   author={Dundon, A.},
   author={Jensen, D.},
   author={Loepp, S.},
   author={Provine, J.},
   author={Rodu, J.},
   title={Controlling formal fibers of principal prime ideals},
   journal={Rocky Mountain J. Math.},
   volume={37},
   date={2007},
   number={6},
   pages={1871--1891},
   issn={0035-7596},
   review={\MR{2382631}},
   doi={10.1216/rmjm/1199649827},
}

\bib{heitmannUFD}{article}{
   author={Heitmann, Raymond C.},
   title={Characterization of completions of unique factorization domains},
   journal={Trans. Amer. Math. Soc.},
   volume={337},
   date={1993},
   number={1},
   pages={379--387},
   issn={0002-9947},
   review={\MR{1102888}},
   doi={10.2307/2154327},
}

\bib{heitmann}{article}{
  author={Heitmann, Raymond C.},
  title={Completions of local rings with an isolated singularity},
  journal={J. Algebra},
  volume={163},
  date={1994},
  number={2},
  pages={538--567},
  issn={0021-8693},
  review={\MR{1262718}},
  doi={10.1006/jabr.1994.1031},
}

\bib{lech}{incollection}{
    AUTHOR = {Lech, Christer},
     TITLE = {A method for constructing bad {N}oetherian local rings},
 BOOKTITLE = {Algebra, algebraic topology and their interactions
              ({S}tockholm, 1983)},
    SERIES = {Lecture Notes in Math.},
    VOLUME = {1183},
     PAGES = {241--247},
 PUBLISHER = {Springer, Berlin},
      YEAR = {1986},
}

\bib{loepp03}{article}{
   author={Loepp, S.},
   title={Characterization of completions of excellent domains of
   characteristic zero},
   journal={J. Algebra},
   volume={265},
   date={2003},
   number={1},
   pages={221--228},
   issn={0021-8693},
   review={\MR{1984908}},
   doi={10.1016/S0021-8693(03)00239-4},
}

\bib{loepp2018uncountable}{article}{
   author={Loepp, S.},
   author={Michaelsen, A.},
   title={Uncountable $n$-dimensional excellent regular local rings with
   countable spectra},
   journal={Trans. Amer. Math. Soc.},
   volume={373},
   date={2020},
   number={1},
   pages={479--490},
   issn={0002-9947},
   review={\MR{4042882}},
   doi={10.1090/tran/7921},
}

\bib{paper1}{article}{
    author = {Loepp, S.},
    author = {Yu, Teresa},
    title = {Completions of countable excellent local domains and noncatenary local domains},
    year = {2020},
    eprint={arXiv:2005.06911},
}

\bib{matsumura}{book}{
   author={Matsumura, Hideyuki},
   title={Commutative ring theory},
   series={Cambridge Studies in Advanced Mathematics},
   volume={8},
   edition={2},
   note={Translated from the Japanese by M. Reid},
   publisher={Cambridge University Press, Cambridge},
   date={1989},
   pages={xiv+320},
   isbn={0-521-36764-6},
   review={\MR{1011461}},
}

\bib{rotthaus}{article}{
   author={Rotthaus, Christel},
   title={Excellent rings, Henselian rings, and the approximation property},
   journal={Rocky Mountain J. Math.},
   volume={27},
   date={1997},
   number={1},
   pages={317--334},
   issn={0035-7596},
   review={\MR{1453106}},
   doi={10.1216/rmjm/1181071964},
}

\end{biblist}
\end{bibdiv}

\end{document}